\def\eqref#1{equation~\ref{#1}}
\def\1{\bm{1}}
\DeclareMathAlphabet{\mathsfit}{\encodingdefault}{\sfdefault}{m}{sl}
\SetMathAlphabet{\mathsfit}{bold}{\encodingdefault}{\sfdefault}{bx}{n}
\newtheorem{theorem}{Theorem} %
\newtheorem{definition}[theorem]{Definition} %
\newtheorem{lemma}[theorem]{Lemma} %
\newtheorem{corollary}[theorem]{Corollary}%
\newtheorem{remark}[theorem]{Remark}%
\newtheorem{proposition}[theorem]{Proposition}
\newtheorem{example}[theorem]{Example}
\DeclareMathOperator{\im}{im} 
\DeclareMathOperator{\rank}{rank}
\title{MCbiF: Measuring Topological Autocorrelation in Multiscale Clusterings via 2-Parameter Persistent Homology
}
\author{Juni Schindler \\
Department of Mathematics, Imperial College London, UK\\
Department of Mathematical Modeling and Machine Learning, University of Zurich, Switzerland\\
\texttt{juni.schindler@uzh.ch} \\
\And
Mauricio Barahona \\
Department of Mathematics, Imperial College London, UK\\
\texttt{m.barahona@imperial.ac.uk} \\
}
\begin{document}

\maketitle

\begin{abstract}
Datasets often possess an intrinsic multiscale structure with meaningful descriptions at different levels of coarseness.  Such datasets are naturally described as multi-resolution clusterings, i.e., not necessarily hierarchical sequences of partitions across scales. To analyse and compare such sequences, we use tools from topological data analysis and define the Multiscale Clustering Bifiltration (MCbiF), a 2-parameter filtration of abstract simplicial complexes that encodes cluster intersection patterns across scales. The MCbiF is a complete invariant of (non-hierarchical) sequences of partitions and can be interpreted as a higher-order extension of Sankey diagrams, which reduce to dendrograms for hierarchical sequences. We show that the multiparameter persistent homology (MPH) of the MCbiF yields a finitely presented and block decomposable module, and its stable Hilbert functions characterise the topological autocorrelation of the sequence of partitions. In particular, at dimension zero, the MPH captures violations of the refinement order of partitions, whereas at dimension one, the MPH captures higher-order inconsistencies between clusters across scales. We then demonstrate through experiments the use of MCbiF Hilbert functions as interpretable topological feature maps for downstream machine learning tasks, and show that MCbiF feature maps outperform both baseline features and representation learning methods on regression and classification tasks for non-hierarchical sequences of partitions. We also showcase an application of MCbiF to real-world data of non-hierarchical wild mice social grouping patterns across time.
\end{abstract}

\section{Introduction}\label{sec:intro}

In many applications, datasets possess an intrinsic multiscale structure, whereby meaningful descriptions exist at different %
resolutions or levels of coarseness. Think, for instance, of the multi-resolution structure in commuter mobility patterns~\citep{alessandrettiScalesHumanMobility2020,schindlerMultiscaleMobilityPatterns2023}, communities in social networks~\citep{beguerisse-diazWhoWhatDiabetes2017} and thematic groups of documents~\citep{bleiLatentDirichletAllocation2003,grootendorstBERTopicNeuralTopic2022}; the subgroupings in single-cell data~\citep{hoekzemaMultiscaleMethodsSignal2022} or phylogenetic trees~\citep{chanTopologyViralEvolution2013}; and the functional substructures in proteins~\citep{delvenneStabilityGraphCommunities2010, delmotteProteinMultiscaleOrganization2011}.
In such cases, the natural description of the data  goes beyond a single clustering 
and consists of a multiscale sequence of partitions %
parametrised by a scale parameter  $t$.  
Traditionally, multiscale descriptions have emerged from hierarchical clustering, with $t$ as the depth of the dendrogram~\citep{carlssonCharacterizationStabilityConvergence2010,murtaghAlgorithmsHierarchicalClustering2012}.
However, in many real-world applications, the data structure is multiscale yet \textit{non-hierarchical}. For example in temporal clustering, where $t$ corresponds to physical time~\citep{rosvallMappingChangeLarge2010, liechtiTimeResolvedClustering2020,  bovetFlowStabilityDynamic2022}; in  
topic modelling and document classification, where $t$ captures the coarseness of the topic groupings~\citep{altuncuFreeTextClusters2019,fukuyamaMultiscaleAnalysisCount2023,liuPatternsCooccurrentSkills2025}; or in clustering methods that exploit a diffusion on the data geometry, where $t$ is the increasing time horizon of the diffusion~\citep{coifmanGeometricDiffusionsTool2005,azranSpectralMethodsAutomatic2006, lambiotteRandomWalksMarkov2014}.

A natural problem is then \textit{how to analyse and compare non-hierarchical, multi-resolution sequences of partitions parametrised by the scale $t$.} Here we address this question from the perspective of topological data analysis~\citep{carlssonTheoryMultidimensionalPersistence2009,carlssonComputingMultidimensionalPersistence2009, botnanIntroductionMultiparameterPersistence2023} by introducing the \textit{Multiscale Clustering Bifiltration} (MCbiF), a 2-parameter filtration of abstract simplicial complexes that encodes the patterns of cluster intersections across all scales.

\paragraph{Problem definition.}
A \textit{partition} $\pi$ of set $X=\{x_1,x_2,...,x_N\}$ is a collection of mutually exclusive subsets $C_i \subseteq X$ (or \textit{clusters}) that cover $X$, i.e., $\pi =\{C_1,\dots,C_c\}$ such that $X=~\bigcup_{i=1}^cC_i$  and $C_i \bigcap C_j = \emptyset, \, \forall i \neq j$. 
We denote the cardinality as $|\pi|=c$,  
and $\pi_i$ for the $i$-th cluster $C_i$ of $\pi$. 
Let $\Pi_X$ denote the \textit{space of partitions} of $X$. We write $\pi\le \pi'$ if every cluster in $\pi$ is contained in a cluster of $\pi'$. This \textit{refinement} relation constitutes a partial order and leads to the \textit{partition lattice} $(\Pi_X,\le)$ with %
lower bound $\hat{0}:=\{\{x_1\},\dots,\{x_N\}\}$ and %
upper bound $\hat{1}:=\{X\}$~\citep{birkhoffLatticeTheory1967}.%

Here, we consider a \textit{sequence of  
partitions} defined as $\theta:[t_1,\infty)\rightarrow \Pi_X, \, t \mapsto \theta(t) %
\in \Pi_X$, i.e., a piecewise-constant function that assigns a partition of $X$ to each $t$. The scale $t$
has $M$ \textit{change points}  
$t_1<t_2<...<t_M$, so that $\theta(t)=\theta(t_m)$ for $t\in [t_m,t_{m+1})$, $m=1,\dots, M-1$, and $\theta(t)=\theta(t_M)$ for $t\in[t_M,\infty)$.
    The sequence $\theta$ is \textit{hierarchical} in $[s,t]$ if we have a strict sequence of refinements: either agglomerative ($\theta(r_1)\le \theta(r_2), \forall r_1,r_2\in[s,t]$ with $r_1\le r_2$) or divisive ($\theta(r_1)\ge \theta(r_2), \, \forall r_1,r_2\in [s,t]$ with $r_1\le r_2$). We say that $\theta$ is \textit{strictly hierarchical} if it is hierarchical in $[t_1,\infty)$.
    The sequence $\theta$ is called \textit{coarse-graining} if $|\theta(s)|\ge|\theta(t)|, \, \forall s\le t$.\footnote{Coarse-graining is equivalent to non-decreasing mean cluster size
(see Remark~\ref{lem:coarse-graining_mean_size} in Appendix~\ref{appx:proofs_1}).}
Conversely, $\theta$ is called \textit{fine-graining} if $|\theta(s)|\le|\theta(t)|, \, \forall s\le t$. %

Our goal is to characterise and analyse sequences of partitions $\theta$, including non-hierarchical ones, in an integrated manner, taking account of memory effects across the scale $t$. 
\begin{remark}
    Here, we are not concerned with the task of computing the multiscale clustering (i.e., the sequence of partitions 
    $\theta$) from dataset $X$, for which several methods exist. Rather, we take $\theta$ as a given, and aim to analyse its structure.
\end{remark}

\begin{remark}
This problem is distinct from consensus clustering, which aims to produce a \textit{summary partition} by
combining a set of partitions obtained, e.g., from different optimisations or clustering algorithms~\citep{strehlClusterEnsemblesKnowledge2002,vega-ponsSURVEYCLUSTERINGENSEMBLE2011}.
\end{remark}

\begin{figure}[htb!]
    \centering
    \includegraphics[width=.85\linewidth]{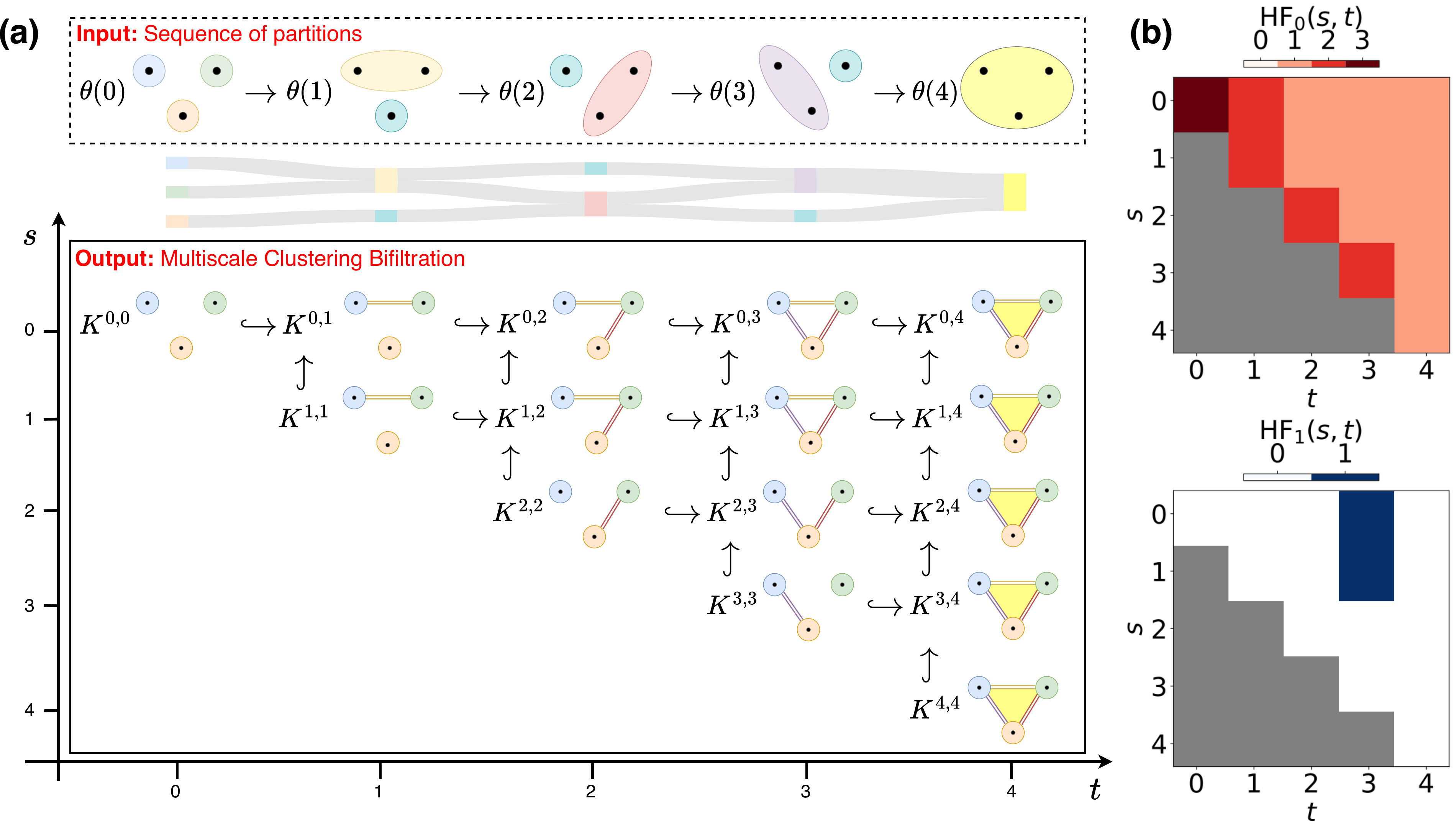}
    \vspace{-0.4cm}
    \caption{(a) The MCbiF 
    encodes the structure of a non-hierarchical sequence of partitions $\theta$ as a bifiltration of abstract simplicial complexes $K^{s,t}$ (see Example~\ref{ex:toy_example}). %
    (b) The Hilbert functions $\mathrm{HF}_k(s,t)$ of the MCbiF are invariants that capture the topological autocorrelation of $\theta$:  violations of the refinement order at dimension $k=0$, and higher-order cluster inconsistencies at dimension $k=1$. The  $\mathrm{HF}_k(s,t)$ can be used as feature maps for downstream machine learning tasks.} 
    \label{fig:mcbif_construction}
\end{figure}

\paragraph{Contributions.}
To address this problem, we define the MCbiF, a bifiltration of abstract simplicial complexes, which  
represents the clusters and 
their intersection patterns in the sequence $\theta$ 
for varying starting scale $s$ and lag $t-s$ (Fig.~\ref{fig:mcbif_construction}). The MCbiF is a complete invariant of $\theta$, and we use the machinery of multiparameter persistent homology (MPH)~\citep{carlssonTheoryMultidimensionalPersistence2009,carlssonComputingMultidimensionalPersistence2009, botnanIntroductionMultiparameterPersistence2023} to summarise its topological structure. We prove that the MCbiF leads to a block decomposable persistence module with stable Hilbert functions $\mathrm{HF}_k(s,t)$, and we show that these invariants characterise the \textit{topological autocorrelation} of the sequence of partitions $\theta$ across scales. In particular, the $\mathrm{HF}_k(s,t)$ quantify the non-hierarchy in $\theta$ in complementary ways: at dimension $k=0$, it detects the lack of a maximal partition in the subposet $\theta([s,t])$ with respect to %
refinement; %
at dimension $k=1$, it quantifies higher-order inconsistencies of cluster assignments across scales. 
In contrast, baseline methods such as ultrametrics~\citep{carlssonCharacterizationStabilityConvergence2010} or information-based measures~\citep{meilaComparingClusteringsVariation2003} are restricted to pairwise comparisons between elements or clusters, respectively; hence, they cannot detect higher-order cluster inconsistencies. 
Furthermore, 
we provide an equivalent nerve-based construction of the MCbiF that can be interpreted as a higher-order extension of the Sankey diagram of the sequence of partitions. In particular, for the hierarchical case, 
the 0-dimensional MCbiF Hilbert function can be obtained from the number of branches in 
the Sankey diagram, which reduces to a dendrogram.
Finally, we show that 
$\mathrm{HF}_k(s,t)$ provide interpretable feature maps usable for downstream machine learning tasks. In our experiments, the MCbiF feature maps outperform both baseline features and representation learning methods on regression and classification tasks for non-hierarchical sequences of partitions. We also showcase an application of MCbiF to real-world data of non-hierarchical wild mice social grouping patterns across time~\citep{bovetFlowStabilityDynamic2022}.

\section{Related Work}\label{sec:related_work}

\paragraph{Dendrograms and Ultrametrics.} A hierarchical, coarse-graining sequence $\theta$ with $\theta(t_1=0)=\hat 0$ and $\theta(t_M)=\hat 1$ is called an \textit{agglomerative dendrogram}, and can be represented by an acyclic rooted merge tree~\citep{jainDataClusteringReview1999,carlssonCharacterizationStabilityConvergence2010}. One can define an \textit{ultrametric} $D_\theta$ from the first-merge times, which follows from the depth in the dendrogram. \cite{carlssonCharacterizationStabilityConvergence2010} showed there is a one-to-one correspondence between agglomerative dendrograms and ultrametrics that can be used to efficiently compare dendrograms using the Gromov-Hausdorff distance between the ultrametric spaces~\citep{memoliGromovHausdorffDistanceUltrametric2023}. When $\theta$ is non-hierarchical, however, first-merge times no longer define the sequence uniquely, as merged clusters can split again. Hence, $\theta$ cannot be represented by a tree, and $D_\theta$ does not fulfil the triangle inequality in general. As a result, ultrametrics cannot be used to analyse and compare non-hierarchical sequences of partitions (see Section~\ref{sec:links_other_methods}).

\paragraph{Pairwise Comparison of Partitions.} Different measures to compare pairs of partitions have been introduced in the literature. The \textit{Adjusted Rand Index} (ARI) is the chance-corrected \textit{Rand Index} that compares clusterings by counting elements that are assigned to the same or different clusters~\citep{hubertComparingPartitions1985}. Information-based measures %
compute the information gain and loss between two partitions using the \textit{Conditional Entropy} (CE) or the \textit{Variation of Information} (VI), which is a metric on $\Pi_X$~\citep{meilaComparingClusteringsVariation2003,meilaComparingClusteringsInformation2007} (see formulas in Appendix~\ref{sec:baseline_methods_appx}). The %
\textit{Maximum Overlap Distance} (MOD), which is  also a metric on $\Pi_X$, measures the minimal classification error when one partition is assumed to be the correct one
~\citep{peixotoRevealingConsensusDissensus2021}. A key limitation of all these measures is that they rely only on pairs of partitions and cannot capture higher-order cluster inconsistencies; yet extending them to more than two partitions is non-trivial. In consensus clustering, the average of the pairwise ARI, VI or MOD between the partitions in a set is used as a \textit{consensus index}~\citep{vinhNovelApproachAutomatic2009,vinhInformationTheoreticMeasures2010}. However, these average measures are insensitive to ordering and so cannot capture memory effects in sequences of partitions.

\section{The Multiscale Clustering Bifiltration (MCbiF)}\label{sec:mcbif_theory}

The central object of our paper is a novel bifiltration of abstract simplicial complexes that encodes cluster intersection patterns in the sequence of partitions $\theta$ across the scale $t$. %
For background on simplicial complexes and bifiltrations, see Appendix~\ref{sec:background_mph}.

\begin{definition}[Multiscale Clustering Bifiltration]
Given a sequence $\theta:[t_1,\infty)\rightarrow \Pi_X$, we define the \textit{Multiscale Clustering Bifiltration} (MCbiF), $\mathcal{M}$, 
as a bifiltration of abstract simplicial complexes:
\begin{equation}\label{eq:mcbif}
\mathcal{M}:=(K^{s,t})_{t_1\le s \le t} \quad \text{where} \quad 
    K^{s,t}:= %
    \bigcup_{t_1\le s\le r \le t} \,\, \bigcup_{C\in\theta(r)}\Delta C.
\end{equation}
\end{definition}
In this construction, each cluster $C$ corresponds to a $(|C|-1)$-dimensional solid simplex $\Delta C:=2^C$, which, by definition, contains all its lower-dimensional simplices~\citep{schindlerAnalysingMultiscaleClusterings2025}. This echoes natural concepts of data clustering viewed as information compression or lumping~\citep{ rosvallMapsRandomWalks2008, rosvallMultilevelCompressionRandom2011, lambiotteRandomWalksMarkov2014}, and of clusters as equivalence classes~\citep{brualdiIntroductoryCombinatorics2010}. 
The MCbiF then aggregates all clusters (simplices) from partition $\theta(s)$ to $\theta(t)$ through the union operators, such that a $k$-simplex $\sigma=[x_1,\dots,x_{k+1}]\in K^{s,t}$ consists of elements 
that are assigned to the same cluster (at least once) in the interval $[s,t]$, i.e., $x_1,\dots,x_{k+1}\in C$ for some cluster $C\in\theta(r), \, r\in[s,t]$.
The bifiltration depends not only on the lag $|t-s|$ but also on the starting scale 
$s$, and captures the topological autocorrelation in the sequence of partitions, see Fig.~\ref{fig:mcbif_construction}. 
We first show that the MCbiF is indeed a well-defined bifiltration.

\begin{proposition}\label{prop:mcbif_is_bifiltration}
    $\mathcal{M}$ is a multi-critical bifiltration  %
    uniquely defined by its values on the finite grid $P=\{(s,t)\in [t_1,\dots,t_M]\times [t_1,\dots,t_M]\; |\; s\le t\}$ with partial order $(s,t)\le (s',t')$ if $s\ge s', t\le t'$. %
\end{proposition}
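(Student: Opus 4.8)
The plan is to verify the three assertions in turn---that $\mathcal{M}$ is a bona fide bifiltration, that it is determined by its restriction to the finite grid $P$, and that it is multi-critical---each by directly unwinding the definition in Eq.~\eqref{eq:mcbif}. First I would check the bifiltration axioms. Each $K^{s,t}$ is a simplicial complex because it is a union of full simplices $\Delta C = 2^C$, and a union of subset-closed families is again subset-closed; moreover $K := K^{t_1,t_M} = \bigcup_{(s,t)} K^{s,t}$ contains every $K^{s,t}$ as a subcomplex. The heart of this step is monotonicity with respect to the stated order. The key observation is that $K^{s,t}$ depends on $(s,t)$ only through the interval $[s,t]$: if $s \ge s'$ and $t \le t'$, then $[s,t] \subseteq [s',t']$, so every scale $r$ contributing a cluster to $K^{s,t}$ also lies in $[s',t']$ and contributes to $K^{s',t'}$, giving $K^{s,t} \subseteq K^{s',t'}$. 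This is precisely the relation $(s,t) \le (s',t')$ of the proposition, and it explains the reversal of orientation in the first coordinate: enlarging the interval enlarges the complex. The induced maps compose functorially since they are ordinary set inclusions of subcomplexes of $K$, so commutativity of the diagram is automatic.

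Next I would reduce to the finite grid. Since $\theta$ is piecewise constant with $\theta(r) = \theta(t_{\mu(r)})$, where $\mu(r) := \max\{m : t_m \le r\}$, the collection of clusters appearing over an interval $[s,t]$ with $\mu(s) = m$ and $\mu(t) = n$ is exactly $\bigcup_{l=m}^{n} \theta(t_l)$, because the change points $t_{m+1}, \dots, t_n$ all lie in $(s,t]$ and no further partition values are encountered. Hence $K^{s,t} = K^{t_m,t_n}$, so $\mathcal{M}$ is constant on the half-open boxes determined by consecutive change points and is recovered in full from its values on $P = \{(t_m,t_n) : 1 \le m \le n \le M\}$, together with the restricted order $(m,n) \le (m',n')$ iff $m \ge m'$ and $n \le n'$. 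The one point requiring care is the boundary behaviour at the change points, which is handled uniformly by the right-continuity built into $\theta$.

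Finally, for multi-criticality I would identify the birth set of each simplex explicitly. A simplex $\sigma = [x_1,\dots,x_{k+1}]$ lies in $K^{t_m,t_n}$ iff its vertices are co-clustered at some scale in $\{t_m, \dots, t_n\}$; writing $R_\sigma \subseteq \{t_1,\dots,t_M\}$ for the set of scales at which $\sigma$ is realised, one checks that every grade in the support dominates a diagonal grade $(t_k,t_k)$ with $t_k \in R_\sigma$, and that distinct diagonal grades are pairwise incomparable under $\le$. Thus $\mathrm{birth}(\sigma) = \{(t_k,t_k) : t_k \in R_\sigma\}$ has cardinality $|R_\sigma|$. Since every element $x \in X$ is covered by a cluster in every partition, each vertex $\{x\}$ satisfies $R_{\{x\}} = \{t_1,\dots,t_M\}$ and therefore carries $M \ge 2$ distinct births, so $\mathcal{M}$ cannot be isomorphic to a sublevel filtration and is multi-critical.

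I expect the main obstacle to be bookkeeping rather than depth: correctly orienting the partial order so that interval inclusion matches the module structure, and correctly pinning down the minimal (birth) grades of the support in the reversed order. Both reduce to elementary order-theoretic checks once the dependence of $K^{s,t}$ on the interval $[s,t]$ has been isolated.
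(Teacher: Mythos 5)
Your proof is correct and follows essentially the same route as the paper's: monotonicity of the inclusions $K^{s,t}\subseteq K^{s',t'}$ from interval containment, reduction to the finite grid via the piecewise constancy of $\theta$, and multi-criticality from the pairwise incomparability of the diagonal birth grades of the vertices. The only cosmetic difference is that the paper exhibits the incomparable births $(t_1,t_1)$ and $(t_1+\delta,t_1+\delta)$ in the continuous poset $[t_1,\infty)^{\mathrm{op}}\times[t_1,\infty)$, whereas you use distinct change points $(t_k,t_k)$, which tacitly assumes $M\ge 2$.
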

The proof is straightforward, see  Appendix~\ref{appx:proofs_4}. 
The MCbiF leads to a triangular commutative diagram where arrows indicate inclusion maps between abstract simplicial complexes 
(Fig.~\ref{fig:mcbif_construction}). The sequence of partitions $\theta(t)$ is encoded by the complexes $K^{t,t}$ on the diagonal of the diagram, hence the MCbiF is a complete invariant of $\theta$. Moving along horizontal arrows corresponds to fixing a starting scale $s$ and going forward in the sequence $\theta$, thus capturing coarse-graining. Moving along vertical arrows corresponds to fixing an end scale $t$ and going backwards in $\theta$, capturing fine-graining. %
By fixing $s:= t_1$ (top row in the MCbiF diagram), we recover the 1-parameter Multiscale Clustering Filtration (MCF) defined by \citet{schindlerAnalysingMultiscaleClusterings2025}, see Remark~\ref{rem:mcbif_row_is_mcf}. %

Applying MPH %
 to the bifiltration $\mathcal{M}$ %
at dimensions $k\le \dim K$, for $K=K^{t_M,t_M}$, leads to a triangular diagram of simplicial complexes $H_k(K^{s,t})$ called persistence module (see Appendix~\ref{sec:background_mph}). %
We show in Proposition~\ref{prop:block_decomposable} in Appendix~\ref{appx:proofs_4} that the MCbiF persistence module is  \textit{pointwise finite-dimensional}, \textit{finitely presentable} and \textit{block-decomposable} (see \cite{botnanIntroductionMultiparameterPersistence2023} for definitions). These strong algebraic 
properties are important because they 
guarantee algebraic stability of the MCbiF~\citep{bjerkevikStabilityIntervalDecomposable2021}. 
In particular, the finite presentation property implies stability of the MCbiF Hilbert functions $\mathrm{HF}_k(s,t)$ (see Eq.~\ref{eq:hilbert_function} in 
Appendix~\ref{sec:background_mph}) 
with respect to small changes in the module~\citep[Corollary 8.2.]{oudotStabilityMultigradedBetti2024}. This justifies the use of $\mathrm{HF}_k(s,t)$ as simple interpretable invariants for the topological autocorrelation captured by  in the following. %

\subsection{Measuring Topological Autocorrelation with MCbiF}\label{sec:mcbif_mph_hf_conflicts}

We now show how the topological autocorrelation measured by the $\mathrm{HF}_k(s,t)$ of MCbiF can be used to detect cluster-assignment conflicts. We focus on dimensions $k=0,1$ in this paper, where 
$\mathrm{HF}_0(s,t)$ counts the number of connected components of $K^{s,t}$ and $\mathrm{HF}_1(s,t)$ is the number of 1-dimensional holes in $K^{s,t}$ (see Appendix~\ref{sec:background_mph}). We show below that the computation of these invariants reveals different aspects of the non-hierarchy in the sequence of partitions.

\paragraph{Low-order Non-Hierarchy in Sequences of Partitions %
}

An important aspect of hierarchy is the \textit{nestedness} of the clusters in the sequence. %
    We say that $\theta$ is \textit{nested} in $[s,t]$ %
    when $\forall r_1,r_2\in[s,t]$, we have that
    $ \forall\; C\in\theta(r_1), C'\in\theta(r_2)$, one of the sets $C \setminus C'$, $C' \setminus C$ or $C \cap C'$ is empty. See~\citet[Definition 2.12]
{korteCombinatorialOptimizationTheory2012}. We say that $\theta$ is \textit{strictly nested} when $\theta$ is nested in $[t_1,\infty)$.
It follows directly that a 
hierarchical sequence $\theta$ is always nested. However, the converse is not necessarily true, as illustrated by the nested, non-hierarchical sequence in Fig.~\ref{fig:sankey_conflicts}b. 
To quantify this low-order non-hierarchy in sequence $\theta$, %
we can compute the invariant  $\mathrm{HF}_0(s,t)$ with its associated notion of \textit{0-conflicts} defined next. 
Recall that each partition $\theta(t)$ can be interpreted as an equivalence relation $\sim_t$ where %
$x\sim_t y$ if $\exists\; C\in \theta(t)$ such that  
$x,y \in C$~\citep{brualdiIntroductoryCombinatorics2010}.

\begin{definition}[0-conflict and triangle 0-conflict]
\label{def:conflict}
 a) We say that $\theta$ has a \textit{0-conflict} in $[s,t]$ if the subposet $\theta([s,t])$ has no maximum, i.e., $\nexists r\in[s,t]$ such that  $\theta(r')\le \theta(r)$, $\forall r' \in [s, t]$.  
 b) We say that $\theta$ has a \textit{triangle 0-conflict} in $[s,t]$ if $\exists\; x,y,z\in X$ such that $\exists r_1,r_2\in[s,t]$: $x\sim_{r_1}y\sim_{r_2}z$ and $\nexists r\in[s,t]$: $x\nsim_{r} y\nsim_{r} z$.
\end{definition}

Next, we show that all triangle 0-conflicts are 0-conflicts. Moreover, all 0-conflicts break hierarchy, and triangle 0-conflicts additionally break nestedness.

\begin{proposition}\label{prop:0_conflict_hierarchy_nestedness}
    (i) Every triangle 0-conflict is a 0-conflict, but the opposite is not true.
    (ii) If $\theta$ has a 0-conflict in $[s,t]$, then
    $\theta$ is non-hierarchical in $[s,t]$.
    (iii) If $\theta$ is either coarse- or fine-graining but non-hierarchical in $[s,t]$, then $\theta$ has a 0-conflict in $[s,t]$.
    (iv) If $\theta$ has a triangle 0-conflict in $[s,t]$, then $\theta$ is non-nested in $[s,t]$. 
\end{proposition}
See Appendix~\ref{appx:proofs_4_1} for the simple proof. %
Fig.~\ref{fig:sankey_conflicts}b illustrates a 0-conflict that is not a triangle 0-conflict, and Fig.~\ref{fig:sankey_conflicts}c shows a triangle 0-conflict. 
The following proposition develops a sharp upper bound for $\mathrm{HF}_0$ that can be used to capture 0-conflicts.
\begin{proposition}\label{prop:hf0_properties}
(i) $\mathrm{HF}_0(s,t)\le \min_{r\in[s, t]}|\theta(r)|$, 
$\forall [s,t] \subseteq [t_1,\infty)$.
(ii) $\mathrm{HF}_0(s,t) < \min_{r\in[s, t]}|\theta(r)|$
iff $\theta$ has a 0-conflict in $[s,t]$.
(iii) $\mathrm{HF}_0(s,t) = |\theta(r)|$ for $r\in[s,t]$ iff $\theta(r)$ is the maximum of the subposet $\theta([s,t])$.
\end{proposition}
See Appendix~\ref{appx:proofs_4_1} for the proof. 
Proposition~\ref{prop:hf0_properties} shows that $\mathrm{HF}_0$ measures low-order non-hierarchy %
in $\theta$ by capturing 0-conflicts. %
To quantify this, we introduce a global normalised measure for $\theta$. %
\begin{definition}[Average 0-conflict]\label{def:avg_0_conflict} Let $T:=t_M+\frac{t_M-t_1}{M-1}$. The \textit{average 0-conflict} is defined as:
\begin{equation}\label{eq:persistent_0_conflict}
0 \, \le \, \bar c_0(\theta):=1-\frac{2}{|T-t_1|^2}\int_{t_1}^{T} \int_{s}^{T}  \frac{\mathrm{HF}_0(s,t)}
{\min_{r\in[s,t]}\mathrm{HF}_0(r,r)} 
\mathrm{d}s\; \mathrm{d}t \, \le \, 1.
\end{equation}
\end{definition}
Higher values of $\bar c_0(\theta)$ indicate a high level of 0-conflicts and increased low-order non-hierarchy, as shown by the next corollary.
\begin{corollary}\label{cor:hf0_hierarchy}
    (i) If $\theta$ is hierarchical in $[s, t]$, 
    then $\mathrm{HF}_0(s,t)=\min(|\theta(s)|,|\theta(t)|)$. 
    As a special case, this implies $\mathrm{HF}_0(t,t)=|\theta(t)|, \forall t\ge t_1$. 
    (ii) $\bar c_0(\theta)>0$ iff $\theta$ has a 0-conflict.
    (iii) Let $\theta$ be either coarse- or fine-graining. Then, $\bar c_0(\theta)=0$ iff $\theta$ is strictly hierarchical.
\end{corollary}

See Appendix~\ref{appx:proofs_4_1} for the proof. 
Furthermore, we can detect triangle 0-conflicts by analysing the graph-theoretic properties of the MCbiF 1-skeleton $K_1^{s,t}$. Proposition~\ref{prop:star_clustering_coeff} in  Appendix~\ref{appx:proofs_4_1} shows that a \textit{clustering coefficient} $C(K_1^{s,t})<1$ indicates the presence of a triangle 0-conflict.

\paragraph{Higher-order Inconsistencies between Clusters in Sequences of Partitions} 

Measuring 0-conflicts in $\theta$ is only one way of capturing non-hierarchy. 
An additional phenomenon that can arise in non-hierarchical sequences is higher-order inconsistencies of cluster assignments across scales. These are already captured by the 1-dimensional homology groups~\citep{schindlerAnalysingMultiscaleClusterings2025} and the associated notion of 1-conflict, which we define next. %
Recall the definition of $1$-cycles $Z_1(K^{s,t})$ and non-bounding cycles $H_1(K^{s,t})$, see Eq.\ref{eq:homology_group} in Appendix~\ref{appx:simplicial_homology}.
\begin{definition}[1-conflict]\label{def:1_conflict}
   We say that $\theta$ has a 1-conflict in $[s,t]$ if $\exists\; x_1,\dots,x_n\in X$ such that the 1-cycle $z=[x_1,x_2]+\dots+[x_{n-1},x_n]+[x_n,x_1]\in Z_1(K^{s,t})$ is non-bounding; in other words, $[z]\in H_1(K^{s,t})$ with $[z]\neq 0$.
\end{definition}
The number of distinct 1-conflicts in $[s,t]$ %
(up to equivalence of the homology classes) is given by $\mathrm{HF}_1(s,t)$. We first show that 1-conflicts also lead to triangle 0-conflicts and thus break hierarchy and nestedness of $\theta$.

\begin{proposition}
\label{prop:1-conflict}
(i) $\mathrm{HF}_1(s,t)\ge 1$ iff $\theta$ has a 1-conflict in $[s,t]$. 
(ii) If $\theta$ has a 1-conflict in $[s,t]$, then it also has a triangle 0-conflict.
(iii) If $\theta$ is hierarchical in $[s,t]$, then $\mathrm{HF}_1(s,t)=0$.
\end{proposition}
See Appendix~\ref{appx:proofs_4_1} for a proof. 
Proposition~\ref{prop:1-conflict} shows that a 1-conflict is a special kind of triangle 0-conflict arising from higher-order cluster inconsistencies across scales. 
This is illustrated in Fig.~\ref{fig:mcbif_construction} and Examples~\ref{ex:toy_example}--\ref{ex:4element_example} in Appendix~\ref{appx:examples}, which present sequences of partitions where different 1-conflicts emerge across scales. Note that Proposition~\ref{prop:1-conflict}~(iii) shows that the MCbiF has a trivial 1-dimensional MPH if $\theta$ is strictly hierarchical.

\begin{remark}\label{rem:conflict_resolving_partition}
    The presence of a 1-conflict in $[s,t]$ 
    signals the fact that assigning all the elements involved in the conflict to a shared cluster would increase the consistency of the sequence $\theta$.
    Hence, when a 1-conflict gets resolved (e.g., the corresponding homology generator dies in the MPH at $(s,t')$, $t < t'$), then %
    $\theta(t')$ is called a \textit{conflict-resolving partition}~ \citep{schindlerAnalysingMultiscaleClusterings2025}. This is illustrated in Example~\ref{ex:toy_example}, where a 1-conflict %
    gets resolved by $\theta(4)=\hat{1}$, and Example~\ref{ex:4element_example}, where three different 1-conflicts %
    get resolved one by one by partitions $\theta(7)$, $\theta(8)$ and $\theta(9)$.
\end{remark}

To quantify the presence of 1-conflicts in $\theta$, we introduce an unnormalised %
global measure. %

\begin{definition}[Average 1-conflict]

Let $T$ be as in Definition~\ref{def:avg_0_conflict}. The \textit{average 1-conflict} is defined as:
\begin{equation}\label{eq:persistent_1_conflict}
0 \le \bar c_1(\theta):=\frac{2}{|T-t_1|^2}\int_{t_1}^{T} \int_{s}^{T}  \mathrm{HF}_1(s,t) \mathrm{d}s\; \mathrm{d}t.
\end{equation}
\end{definition}

\begin{corollary}
 $\bar c_1(\theta)>0$ iff $\theta$ has a 1-conflict. In particular, if $\theta$ is strictly nested, then $\bar c_1(\theta)=0$. 
\end{corollary}

Figure~\ref{fig:summary_theory} in Appendix~\ref{appx:theory_summary} summarises all of our theoretical results and their relationships.

\subsection{MCbiF as a Higher-Order Sankey Diagram}\label{sec:nerve_mcbif_sankey}

Consider the definition of the Sankey diagram of the sequence of partitions provided in Section~\ref{sec:sankey_definitions}, and its associated representation as an $M$-layered graph with vertices $V_m$ at each layer representing the clusters of $\theta(t_m)$, see Eq.~\ref{eq:sankey_nodes_edges}.  
Let us define the disjoint union $A(\ell,m):=V_\ell\uplus ... \uplus V_m$, $1\le \ell\le m$, which assigns an index to each cluster in $\theta(t)$ for $t\in[t_\ell,t_m]$. Furthermore, recall that $\theta(t)_i$ denotes the $i$-th cluster $C_i$ of $\theta(t)$. The nerve-based MCbiF can then be defined as follows.

\begin{definition}[Nerve-based MCbiF]
Let $s\in[t_\ell,t_{\ell+1}),\; \ell=1,...,M-1,$ and $t\in[t_m,t_{m+1}),\; m=\ell,...,M-1$ or $t \ge t_m$ for $m=M$. We define the \textit{nerve-based MCbiF} as %
\begin{align}
\tilde{\mathcal{M}} :=(\tilde K^{s,t})_{t_1\le s\le t}, \quad \text{where}\quad 
\tilde K^{s,t} :=  \{ \sigma \subseteq A(\ell,m) : \bigcap_{(n,i)\in \sigma}\theta(t_n)_i \neq \emptyset \}. 
\end{align}
\vspace{-0.6cm}
\end{definition}
The nerve-based MCbiF $\tilde{\mathcal{M}}$ is a 1-critical bifiltration with simplices representing clusters and their intersections, %
in contrast to the original MCbiF  $\mathcal{M}$ (Eq.~\ref{eq:mcbif}) in which simplices represent elements in $X$ and their equivalence relations. 
Despite these different perspectives, Proposition~\ref{prop:equivalence_nerve} in Appendix~\ref{appx:proofs_4_2} shows that $\tilde{\mathcal{M}}$ and $\mathcal{M}$ lead to the same MPH and can be considered as equivalent.
However, the dimensionality of $\mathcal{M}$ and $\tilde{\mathcal{M}}$ can differ, as shown in the following proposition. %
\begin{wrapfigure}[23]{r}{0.35\textwidth}
\vspace{-.1 cm}
    \centering
    \includegraphics[
    width=.92\linewidth]{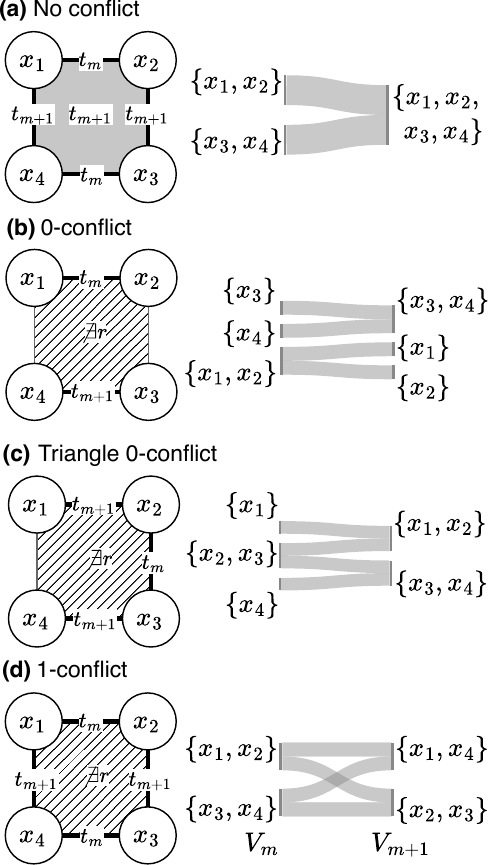}
    \vspace{-0.1cm}
    \caption{Characterisation of conflicts in a single-layer Sankey diagram, following Proposition~\ref{prop:sankey_conflicts}. %
    }
    \label{fig:sankey_conflicts}
\end{wrapfigure}

\begin{proposition}\label{prop:dimension_mcbif}
   (i) $\dim K^{s,t}=\max_{s\le r \le t} \max_{C \in \theta(r)}|C|-1$, $\forall t_1\le s\le t$. 
    (ii) $\dim \tilde K^{t_m,t_{m+n}}= n,\; \forall 1\le m\le M, 0\le n \le M-m$.
\end{proposition}
See proof in Appendix~\ref{appx:proofs_4_2}. The nerve-based MCbiF is therefore computationally advantageous when the number of scales $M$ is smaller than the size of the largest cluster, $M < \max_{C\in \theta([t_1,\infty))}|C|$. 
The nerve-based $\Tilde{\mathcal{M}}$ can be interpreted as a higher-order extension of the Sankey diagram $S(\theta)$ (Eq.~\ref{eq:sankey_nodes_edges}). Yet, unlike $S(\theta)$, which only records pairwise intersections between clusters in consecutive partitions of $\theta$, $\Tilde{\mathcal{M}}$ also accounts for higher-order intersections between clusters in sub-sequences of $\theta$. %

\begin{proposition}\label{prop:nerve_sankey_connection}
    The Sankey diagram graph $S(\theta)$ is a strict 1-dimensional subcomplex of $\tilde K:= \tilde K^{t_1,t_M}$. 
    In particular, $V_m=\tilde K^{t_m,t_{m}}$ and $E_m=\tilde K^{t_m,t_{m+1}}$, $\forall m=1,\dots,M-1$.
    Hence, we can retrieve $S(\theta)$ from the zigzag filtration 
    \begin{equation}\label{eq:zigzag_mcbif_sankey}
        \dots \hookleftarrow \tilde K^{t_m,t_{m}} \hookrightarrow \tilde K^{t_m,t_{m+1}} \hookleftarrow \tilde K^{t_{m+1},t_{m+1}} \hookrightarrow \dots,
    \end{equation}
    which is a subfiltration of the nerve-based MCbiF.
\end{proposition}
See proof in Appendix~\ref{appx:proofs_4_2}. For details on zigzag persistence, see \cite{carlssonZigzagPersistence2010} and Appendix~\ref{appx:zigzag_persistence}. 
Next, we characterise the 0- and 1-conflicts that can arise in a single layer $E_m$ of the Sankey diagram.
\begin{proposition}\label{prop:sankey_conflicts}
(i) There is a 0-conflict in $[t_m,t_{m+1}]$ iff $\exists u\in V_m$ and $v\in V_{m+1}$ with $\mathrm{deg}(u)\ge 2$ and $\mathrm{deg}(v)\ge 2$, where $\mathrm{deg}$ denotes the node degree in the bipartite graph $(V_m\uplus V_{m+1}, E_m)$ associated with the Sankey diagram $S(\theta)$.
(ii) There is a triangle 0-conflict in $[t_m,t_{m+1}]$ iff there is a path of length at least 3 in $E_m$.
(iii) There is 1-conflict in $[t_m,t_{m+1}]$ iff there is a %
cycle in $E_m$.
\end{proposition}

See Appendix~\ref{appx:proofs_4_2}
for a proof and Fig.~\ref{fig:sankey_conflicts} for an illustration. 
Importantly, a cycle in $E_m$ %
leads to a crossing in $E_m$ that cannot be undone ( Fig.~\ref{fig:sankey_conflicts}d).
Hence, Proposition~\ref{prop:sankey_conflicts}~(iii) implies that the sum of the elements of the superdiagonal of $\mathrm{HF}_1$ provides a lower bound for the minimal crossing number of the Sankey diagram, $\overline\kappa_\theta$ defined in Eq.~\ref{eq:crossing number}. %
\begin{corollary}
\label{cor:bound_crossingnumber_HF1}
    $\sum_{m=1}^{M-1}\mathrm{HF}_1(t_m,t_{m+1})\le \overline\kappa_\theta$.
\end{corollary}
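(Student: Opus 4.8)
The plan is to combine Proposition~\ref{prop:sankey_conflicts}~(iii) with the definition of the minimal crossing number $\overline\kappa_\theta$, showing that each $1$-conflict in a single layer $E_m$ forces at least one \emph{unavoidable} crossing in any layout of that layer. First I would fix a single layer $m$ and recall that, by Proposition~\ref{prop:sankey_conflicts}~(iii), $\mathrm{HF}_1(t_m,t_{m+1})$ counts the number of independent (even) cycles in the bipartite graph $(V_m \uplus V_{m+1}, E_m)$ up to homology, which is exactly the first Betti number $\beta_1(E_m)$ of that bipartite subgraph. The key geometric observation, illustrated in Fig.~\ref{fig:sankey_conflicts}d, is that a cycle in $E_m$ cannot be drawn in a two-layer layout without at least one crossing: the two layers of a Sankey diagram are placed on two parallel lines, and a cycle visits both lines alternately, so there is no planar embedding of a cycle that respects the layered (bipartite, straight-edge) structure. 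This is essentially the statement that a bipartite graph drawn with its two parts on two parallel lines requires at least $\beta_1$ crossings, since each independent cycle contributes an obstruction to planarity that cannot be simultaneously removed.

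The key steps I would carry out are as follows. I would argue that for a fixed layer, any ranking pair $(\tau_m, \tau_{m+1})$ yields a layout whose crossing number in that layer is bounded below by the number of independent cycles, i.e.\ $\min_{\tau_m,\tau_{m+1}} \kappa^{(m)}_\theta(\tau) \ge \mathrm{HF}_1(t_m,t_{m+1})$, where $\kappa^{(m)}_\theta$ denotes the contribution of layer $m$ to the crossing number in Eq.~\eqref{eq:crossing number}. One clean way to establish this lower bound is to use the fact that a two-layer drawing with no crossings corresponds to a \emph{caterpillar forest} (equivalently, an outerplanar bipartite graph with both parts on a line), which is necessarily a forest and hence acyclic; so any cycle must produce a crossing, and independent cycles produce independent (non-cancellable) crossings. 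More carefully, I would invoke the standard result that the two-layer crossing number of a bipartite graph is bounded below by its first Betti number $\beta_1 = |E_m| - |V_m| - |V_{m+1}| + (\text{number of connected components})$, which matches $\mathrm{HF}_1(t_m,t_{m+1})$ by Proposition~\ref{prop:sankey_conflicts}~(iii). Summing this layerwise bound over all $m = 1, \dots, M-1$ gives
\begin{equation}
\sum_{m=1}^{M-1}\mathrm{HF}_1(t_m,t_{m+1}) \;\le\; \sum_{m=1}^{M-1} \min_{\tau_m,\tau_{m+1}} \kappa^{(m)}_\theta(\tau).
\end{equation}
Finally I would note that $\overline\kappa_\theta = \min_\tau \kappa_\theta(\tau) = \min_\tau \sum_{m=1}^{M-1} \kappa^{(m)}_\theta(\tau) \ge \sum_{m=1}^{M-1} \min_{\tau_m,\tau_{m+1}} \kappa^{(m)}_\theta(\tau)$, because the global minimum over all rankings is at least the sum of the layerwise minima (each layer's crossings depend only on two consecutive rankings, and minimising jointly can only do worse than minimising each layer independently). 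Chaining these inequalities yields the claimed bound.

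The main obstacle I anticipate is making the layerwise lower bound $\min_{\tau_m,\tau_{m+1}} \kappa^{(m)}_\theta(\tau) \ge \beta_1(E_m)$ rigorous, since it is tempting to treat it as ``obvious'' from Fig.~\ref{fig:sankey_conflicts}d but it actually requires the combinatorial fact that a crossing-free two-layer bipartite drawing is acyclic. The cleanest justification is that a layer $E_m$ admits a crossing-free layout if and only if it is \emph{outerplanar with respect to the two-line constraint}, which for bipartite graphs on two parallel lines forces a forest structure; hence the number of unavoidable crossings is at least the number of independent cycles. I would either cite the classical two-layer crossing number literature (e.g.\ via \citet{warfieldCrossingTheoryHierarchy1977} and the crossing-minimisation references already in the paper) for this planarity-to-forest equivalence, or give a short self-contained inductive argument: removing one edge from each independent cycle yields a spanning forest that admits a planar two-layer drawing, and reinserting each removed edge forces at least one crossing that cannot be cancelled by the others since the cycles are homologically independent. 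A secondary subtlety worth flagging is that the inequality is generally \emph{not} tight: $\overline\kappa_\theta$ can strictly exceed $\sum_m \mathrm{HF}_1(t_m,t_{m+1})$ both because a single layer may require more crossings than its Betti number (odd cycles, dense bipartite structure) and because the global layout couples consecutive layers through shared rankings, so I would be careful to present the statement only as a lower bound and not claim equality.
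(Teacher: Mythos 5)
Your proposal is correct and follows essentially the same route as the paper: the paper obtains this corollary directly from Proposition~\ref{prop:sankey_conflicts}~(iii) together with the observation that a cycle in a layer $E_m$ forces a crossing that cannot be undone, summed over the layers. Your write-up is in fact more rigorous than the paper's one-line justification, since you make explicit the identification $\mathrm{HF}_1(t_m,t_{m+1})=\beta_1(E_m)$, the forest characterisation of crossing-free two-layer drawings (the clean edge-removal version of this argument, rather than the slightly hand-wavy edge-reinsertion induction, is the one to keep), and the inequality between the global minimum and the sum of layerwise minima.
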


\begin{remark}
Note that 1-conflicts that arise across multiple partitions in the sequence (i.e., across multiple layers of the Sankey diagram) do not necessarily lead to crossings. 
See Fig.~\ref{fig:mcbif_construction} for 
a Sankey diagram with no crossing despite the presence of a 1-conflict. 
However, we hypothesise that the full $\mathrm{HF}_0$ and $\mathrm{HF}_1$ feature maps capture more complicated crossings that arise in the Sankey layout across many layers. This insight is exploited in our computational tasks below.
\end{remark}

\section{Mathematical Links of MCbiF to Other Methods}\label{sec:links_other_methods}

\paragraph{Ultrametrics.} Given a sequence $\theta$ with $\theta(t_1=0)=\hat 0$ and $\theta(t_M)=\hat 1$, let us define the matrix of \textit{first-merge times} conditioned on the starting scale~$s$:
\begin{equation}\label{eq:ultrametric_s}
    D_{\theta,s}(x_i,x_j) := \min \{t\ge s\; |\; \exists\; C \in \theta(t): x_i, x_j \in C\}.
\end{equation}
When $s=0$, this recovers the standard matrix of first-merge times $D_\theta:=D_{\theta,0}$ in  Section~\ref{sec:related_work}. If $\theta$ is hierarchical, i.e., an agglomerative dendrogram, %
then $D_{\theta}$ is an ultrametric, as it fulfils the \textit{strong triangle inequality}:  $D_\theta(x,z)\le \max\left(D_\theta(x,y),D_\theta(x,z)\right)$ $\forall x,y,z\in X$. 
From Corollary~\ref{cor:hf0_hierarchy} we have that the number of branches in the agglomerative dendrogram at level $t$, which is given by $|\theta(t)|$, is equal to $\mathrm{HF}_0(s,t)$ for any $s \le t$. Hence, $\mathrm{HF}_0(s,t)$ contains the same information as the ultrametric in the hierarchical case, see~ \cite{schindlerAnalysingMultiscaleClusterings2025} and Proposition~\ref{prop:VR_first_merge_times}. 
If, on the other hand, $\theta$ is non-hierarchical, %
triangle 0-conflicts can lead to violations of the (strong) triangle inequality.
\begin{proposition}\label{prop:0_conflict_triangle_ineq}
     The triplet $x,y,z\in X$ leads to a triangle 0-conflict in $[s,t]$ iff $x,y,z$ violate the strong triangle inequality for $D_{\theta,s}$, i.e.,
     $D_{\theta,s}(x,z)>\max(D_{\theta,s}(x,y),D_{\theta,s}(y,z))$. 
\end{proposition}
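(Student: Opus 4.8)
The plan is to translate both sides of the claimed equivalence into statements about the three first-merge times $a:=D_{\theta,s}(x,y)$, $b:=D_{\theta,s}(y,z)$ and $c:=D_{\theta,s}(x,z)$ from Eq.~\eqref{eq:ultrametric_s}. Since $\theta$ is piecewise constant on its change points, each minimum is attained, so at scale $a$ we genuinely have $x\sim_a y$, and similarly for $b$ and $c$. The single structural fact I will lean on is that all three elements lie in a common cluster at a scale $r$ only if $x\sim_r z$, hence only if $c\le r$; equivalently, for every $t<c$ there is no $r\in[s,t]$ with $x\sim_r y\sim_r z$. I read ``violate the strong triangle inequality'' as the label-free statement that the two largest of $\{a,b,c\}$ are unequal, i.e. there is a unique maximum; the displayed inequality $c>\max(a,b)$ is then the instance obtained by naming the pair with the strictly largest first-merge time $(x,z)$ and the remaining element $y$. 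With this convention I prove the two implications separately.

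For ``$c>\max(a,b)\Rightarrow$ triangle 0-conflict'', set $\mu:=\max(a,b)$ and pick any $t$ with $\mu\le t<c$. Because $a\le\mu\le t$ and $b\le\mu\le t$, the pairs $x,y$ and $y,z$ are each together at some scale in $[s,t]$, giving $r_1,r_2\in[s,t]$ with $x\sim_{r_1}y$ and $y\sim_{r_2}z$. Moreover no $r\in[s,t]$ can satisfy $x\sim_r y\sim_r z$, since such an $r$ would force $x\sim_r z$ and hence $c\le r\le t<c$, a contradiction. Thus $x,y,z$ form a triangle 0-conflict in $[s,t]$ with middle element $y$, in the sense of Definition~\ref{def:conflict}.

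For the converse, suppose $x,y,z$ form a triangle 0-conflict in $[s,t]$ with middle $y$, so $x\sim y$ and $y\sim z$ somewhere in $[s,t]$ (giving $a\le t$ and $b\le t$) while no scale in $[s,t]$ has all three together. If $c>t$ then $c>t\ge\max(a,b)$ and we are done with the given labels. Otherwise $c\le t$, so all three pairs merge inside $[s,t]$; this is exactly the delicate ``cycle'' situation, possible here because in the non-hierarchical setting clusters may split and re-merge, in which every pair is together at some scale yet the triple never is. To resolve it I use transitivity of each relation $\sim_v$: if the two largest of $\{a,b,c\}$ were equal to some $v$, the two pairs realising this maximum share a common element and are both together at scale $v$, so transitivity of $\sim_v$ would place all three in one cluster at $v=\max(a,b,c)\le t$, contradicting the absence of the $2$-simplex on $[s,t]$. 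Hence the two largest first-merge times differ, i.e. the strong triangle inequality is violated; relabelling so that the unique-maximum pair is named $(x,z)$ (the configuration remains a triangle 0-conflict, since all three edges are present and this only permutes which two play the role of the present edges) yields $c>\max(a,b)$.

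The only genuinely subtle step is the case $c\le t$, and its resolution rests entirely on the transitivity lemma ``two coinciding largest first-merge times force simultaneous membership in one cluster'', which I would state and prove first. Everything else is bookkeeping: verifying that the chosen $t$ lies in the admissible window $[\max(a,b),c)$ in the forward direction, and fixing the labelling so that the outer pair of the conflict is the one named $(x,z)$ in the displayed inequality.
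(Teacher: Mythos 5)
Your proof is correct, and it takes a genuinely more complete route than the paper's own argument, which it effectively repairs. Both proofs hinge on the same core mechanism --- transitivity of each single-scale relation $\sim_r$ forces three elements into one cluster --- but beyond that you do three things the paper does not (writing $a=D_{\theta,s}(x,y)$, $b=D_{\theta,s}(y,z)$, $c=D_{\theta,s}(x,z)$ as you do). First, you actually prove the direction from the metric violation to the conflict, by exhibiting the explicit window of end-scales $t\in[\max(a,b),c)$; the paper, despite the ``iff'' in the statement, only argues the conflict-to-violation direction, with the converse hidden in an unjustified ``is equivalent to''. Second, you treat the labelling honestly, and this is not pedantry: under the fixed labels of Definition~\ref{def:conflict} the statement is false --- in Example~\ref{ex:toy_example} the triple $(x_2,x_1,x_3)$ (middle element $x_1$) is a triangle 0-conflict in $[1,3]$ with merge times $(a,b,c)=(1,3,2)$, so the displayed inequality $c>\max(a,b)$ fails for that labelling; correspondingly, the paper's ``without loss of generality, $r_1<r_2<r_3$'' is not a valid WLOG, since it silently permutes the roles of $x,y,z$, which is exactly the relabelling step you state and justify (including the check that relabelling preserves conflict-hood because, in the case $c\le t$, all three pairs merge inside $[s,t]$). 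Third, your two-equal-maxima lemma delivers the violation in the correct form $c>\max(a,b)$, whereas the paper's final line concludes $D_{\theta,s}(x,z)>\min(\dots)$, which is not a violation of the strong triangle inequality (the value triple $(1,3,3)$ is ultrametric yet exceeds the minimum). In short, the paper's proof buys brevity and records the right intuition; yours identifies the only reading under which the proposition is true --- existential in $t$ on the conflict side, label-free on the metric side --- and proves both implications under that reading.
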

See Appendix~\ref{appx:proofs_5} for a proof and Fig.~\ref{fig:multistep_conflicts}a for an illustration. Proposition~\ref{prop:0_conflict_triangle_ineq} shows that $\bar c_0(\theta)$ %
 measures how much the ultrametric property of $D_\theta$ is violated. 
Recall that $D_{\theta,s}$ is a \textit{dissimilarity measure} that can be used to define a  filtration~\citep{chazalPersistenceStabilityGeometric2014}.
We show in Proposition~\ref{prop:VR_first_merge_times} in Appendix~\ref{appx:proofs_5} that the 0-dimensional MPH of MCbiF corresponds to the 0-dimensional MPH of the \textit{Merge-Rips bifiltration} constructed from $D_{\theta,s}$. 
In the hierarchical case, the Merge-Rips bifiltration has trivial 1-dimensional MPH, as $D_\theta$ fulfils the strong triangle inequality, and is thus equivalent to the MCbiF, whose 1-dimensional MPH is also trivial in the hierarchical case, see Proposition~\ref{prop:1-conflict}.

\paragraph{Conditional Entropy.} The conditional entropy (CE) is only defined for pairs of partitions as the expected information of the conditional probability of $\theta(t)$ given $\theta(s)$,
denoted $P_{t|s}$ (see Eq.~\ref{eq:P_t|s} in Appendix~\ref{sec:baseline_methods_appx}).
For the case $M=2$ (i.e., only two partitions in the sequence $\theta$), $\mathrm{HF}_0(t_1,t_2)$ follows directly from the spectral properties of the matrix $P_{t_2|t_1}P_{t_2|t_1}^T$ interpreted as an undirected graph.
\begin{proposition} \label{prop:laplacian}
$\mathrm{HF}_0(t_1,t_2)=\dim(\ker L)$ for graph Laplacian $L=\mathrm{diag}(P_{t_2|t_1}\boldsymbol{1})
-P_{t_2|t_1}P_{t_2|t_1}^T$.
\end{proposition}
See Appendix~\ref{appx:proofs_5} for a proof. 
Note that $P_{t|s}$ only encodes the pairwise relationship between clusters, %
and does not capture higher-order cluster inconsistencies. In particular, CE cannot detect 1-conflicts arising across more than two scales, as seen in   Example~\ref{ex:nce_1conflict} in Appendix~\ref{appx:examples}.

\section{Experiments}

\subsection{Regression Task: Minimal Crossing Number of Sankey Layout}\label{sec:regression_task}

Our first experiment considers a task of relevance in computer graphics and data visualisation: minimising the crossing number of Sankey diagram layouts~\citep{zarateOptimalSankeyDiagrams2018,liOmicsSankeyCrossingReduction2025}. This minimisation is NP-complete. 
Here we use our MCbiF feature maps to predict the minimal crossing number $\overline\kappa_\theta$ (Eq.~\ref{eq:min_crossing}) of the Sankey diagram $S(\theta)$ of a given sequence of partitions $\theta$ (see Section~\ref{sec:sankey_definitions}).  

We build synthetic datasets by sampling randomly from  $\Pi^M_N$, the space of coarse-graining sequences of partitions of $N$ elements with $M$ change points, see Definition~\ref{def:space_coarse-graining_sequences} in Appendix~\ref{appx:regression_task}. 
Setting $M=20$, we generate two datasets of 20,000 random sequences $\theta \in \Pi_N^M$ for $N=5$ and $N=10$. For each $\theta$, we compute five feature maps:  %
$\mathrm{CE}$, %
$\mathrm{ARI}$, $\mathrm{MOD}$ (see Section~\ref{sec:related_work}), and our MCbiF Hilbert functions ($\mathrm{HF}_0$ and $\mathrm{HF}_1$). To benchmark against representation learning, we also consider the (non-unique) \textit{raw label encoding of $\theta$} given by the $N\times M$ matrix %
whose $m$-th column contains the labels of the clusters in $\theta(t_m)$ assigned to the elements in $X$, and the Sankey graphs $S(\theta)$ (Eq.~\ref{eq:sankey_nodes_edges}). 
Our prediction target is $y=\overline\kappa_\theta$ (Eq.~\ref{eq:min_crossing}), the minimal crossing number of the layout of the Sankey diagram, as computed with the \texttt{OmicsSankey} algorithm~\citep{liOmicsSankeyCrossingReduction2025}. See Section~\ref{sec:sankey_definitions} for details. 
We expect that predicting $y$ becomes harder for $N=10$ because the increased complexity of $\Pi_M^N$ allows for more complicated crossings in the Sankey diagram.  %

As a preliminary assessment, we compute the Pearson correlation, $r$, between the crossing number $y$ and summary statistics of the five feature maps under investigation: the %
consensus indices $\overline{\mathrm{VI}}$, $\overline{\mathrm{ARI}}$ and $\overline{\mathrm{MOD}}$, %
and the MCbiF average measures $\bar c_0$ and $\bar c_1$. The correlation between the consensus indices and $y$ is low, %
higher for $\bar c_0$, and highest for $\bar c_1$ ($r=0.47$ for $N=5,10$) (see Fig.~\ref{fig:pearson_correlation} in  
Appendix~\ref{appx:regression_task}). This is consistent with our theoretical results in Section~\ref{sec:nerve_mcbif_sankey}, which show the relation between the crossing number and $\mathrm{HF}_1$ (see Corollary~\ref{cor:bound_crossingnumber_HF1}).

\begin{table}[htb!]
  \centering
  \caption{Regression task. Test $R^2$ score of LR, CNN, MLP and GCN models trained on different feature sets for $N=5$ and $N=10$. See Appendix~\ref{appx:regression_task} for confidence intervals and train $R^2$ scores. %
  }\label{tab:sankey_task_r2_test}
  \begin{tabular}{c|c|cc|cccccc}
     &
    &
    Raw label & Sankey\\ $N$ &Method &encoding & graph %
    & $\mathrm{HF}_0$ & $\mathrm{HF}_1$  & $\mathrm{HF}_0$ \& $\mathrm{HF}_1$  & $\mathrm{CE}$ & $\mathrm{ARI}$ & $\mathrm{MOD}$\\
    \midrule
    \multirow{3}{*}{5}
      & LR & 0.078 & - & 0.147 & 0.486 & 0.539 & 0.392 & 0.166 & 0.413 \\
      & CNN            & 0.267 & - &  0.155    &   0.504   &   \textbf{0.544 }  &   0.492 & 0.422 & 0.354\\
      & MLP & 0.104& - & 0.150&0.491&0.541&0.409& 0.214 & 0.351\\
      & GCN & - & 0.416 & - & - & - & - & - & - \\
    \midrule
    \multirow{3}{*}{10}
      & LR &0.038& - & 0.214 & 0.448 & \textbf{0.516} & 0.457 & 0.246 & 0.345\\
      & CNN             & 0.072 & - &  0.211    &    0.448   &  0.507     & 0.454  & 0.294 &  0.312 \\
      & MLP &0.036& - & 0.212&0.450&0.514&0.458 & 0.256 & 0.246\\
      & GCN & - & 0.229 & - & - & - & - & - & - \\
  \end{tabular}
\end{table}

For the regression task of predicting $\overline\kappa_\theta$, we split each dataset into training (64\%), validation (16\%) and test (20\%). For each feature map (or their combinations), we train three models: linear regression (LR), multilayer perceptron (MLP), and convolutional neural network (CNN).
We use the mean-squared error (MSE) as our loss function and employ the validation set for hyperparameter tuning. See Appendix~\ref{appx:regression_task} for details. We then evaluate model performance on the \textit{unseen test data} using %
the coefficient of determination ($R^2$).  
As representation learning from the raw data, we also train models (LR, CNN, MLP) on the raw label encodings, and a graph convolutional network (GCN) on the Sankey graph $S(\theta)$~(Eq.~\ref{eq:sankey_nodes_edges}).
Table~\ref{tab:sankey_task_r2_test} shows that models trained on MCbiF feature maps outperform all representation learning models;   
even the GCN trained on Sankey graphs ($R^2=0.416$ for $N=5$;  $R^2=0.229$ for $N=10$) has significantly lower performance ($p<0.0001$, Wilcoxon test on residuals) than a simple LR model based on $\mathrm{HF}_1$ ($R^2=0.486$ for $N=5$; $R^2=0.185$ for $N=10$).
This reflects the fact that the MCbiF is a higher-order extension of $S(\theta)$ that better captures the global properties of $\theta$ (see Proposition~\ref{prop:nerve_sankey_connection}).

The MCbiF feature maps also outperform the baseline feature maps $\mathrm{CE}$,  $\mathrm{ARI}$ and $\mathrm{MOD}$; the combined $\mathrm{HF}_0$ and $\mathrm{HF}_1$ features ($R^2=0.544$ for $N=5$; $R^2=0.516$ for $N=10$) have significantly better performance
($p<0.0001$, Wilcoxon test) %
than the best baseline features $\mathrm{CE}$ 
($R^2=0.492$ for $N=5$; $R^2=0.458$ for $N=10$). 
Variability analysis (bootstrapped 95\% confidence intervals) shows that models trained on the combined $\mathrm{HF}_0$ and $\mathrm{HF}_1$ features outperform the other models (see Table~\ref{tab:sankey_task_r2_test_cis} in Appendix~\ref{appx:regression_task}). Note that the strong performance of simple LR models underscores the interpretability of the MCbiF features, important for explainable AI~\citep{adadiPeekingBlackBoxSurvey2018}.

\subsection{Classification Task: Non-Order-Preserving Sequences of Partitions}

In our second task, we classify whether a sequence of partitions is order-preserving, i.e., whether $\theta$ is compatible with a total ordering on the set $X$, see Definition~\ref{def:order_preserving_sequence} in Appendix~\ref{appx:classification_task}. This task is relevant in socio-economics (preference relations in utility theory in social sciences~\citep{robertsMeasurementTheoryApplications2009}) and computer science (weak ordering and partition refinement algorithms~\citep{habibPartitionRefinementTechniques1999}).

We carry out this classification task on synthetic data for which we have a ground truth.
From the space of coarse-graining sequences of partitions $\Pi_N^M$ introduced in Definition~\ref{def:space_coarse-graining_sequences}, we generate a balanced dataset of 3,700 partitions $\theta\in\Pi_{500}^{30}$, half of which are order-preserving ($y=0$) and the other half are non-order-preserving ($y=1$). The loss of order-preservation is induced by introducing random swaps in the %
cluster assignments within layers. See Appendix~\ref{appx:classification_task} for details. 
 For each of the generated $\theta$ we compute  $\mathrm{CE}$, $\mathrm{ARI}$, $\mathrm{MOD}$, $\mathrm{HF}_0$ and $\mathrm{HF}_1$ using the computationally advantageous nerve-based MCbiF.
We choose $N=500$ and $M=30$ to demonstrate the scalability of our method.

\begin{wraptable}[8]{r}{0.53\textwidth}
    \centering
    \vspace{-0.4cm}
    \caption{Classification task. Test accuracy of logistic regression trained on different features. See Appendix~\ref{appx:classification_task} for confidence intervals.}
    \begin{tabular}{c|ccccc}
        Raw label \\ encoding & $\mathrm{HF}_0$ & $\mathrm{HF}_1$   & $\mathrm{CE}$ & $\mathrm{ARI}$ & $\mathrm{MOD}$\\
        \midrule
        0.53 & 0.56  & \textbf{0.97}  & 0.50 & 0.49 & 0.46    
    \end{tabular}
    \label{tab:classification}
\end{wraptable}

Whereas we find no significant difference between the baseline consensus indices of order-preserving ($y=0$) and non-order-preserving ($y=1$) sequences, we observe a significant increase of $\bar c_0$ and $\bar c_1$ for order-preserving sequences (Fig.~\ref{fig:ranking_task_boxplots} in Appendix~\ref{appx:classification_task}). 
For the classification task, we split our data into training (80\%) and test (20\%). For each feature map, we train a logistic regression on the training split and evaluate the accuracy on the test split, see Appendix~\ref{appx:classification_task}. 
We find that $\mathrm{HF}_1$ predicts the label $y=\{0,1\}$ encoding the (lack of) order-preservation with high accuracy (0.97).  In contrast, $\mathrm{CE}$, $\mathrm{ARI}$, $\mathrm{MOD}$ and the raw label encoding of $\theta$ do not improve on a random classifier (Table~\ref{tab:classification}). Our performance variability analysis (bootstrapped 95\% confidence intervals) confirms that the model trained on $\mathrm{HF}_1$ consistently outperforms all models   (Table~\ref{tab:classification_cis} in Appendix~\ref{appx:classification_task}).
This demonstrates the high sensitivity of MCbiF to order-preservation in $\theta$, due to non-order-preserving sequences inducing 1-conflicts that are captured by $\mathrm{HF}_1$.

\begin{figure}[htb!]
    \centering
    \includegraphics[width=1\linewidth]{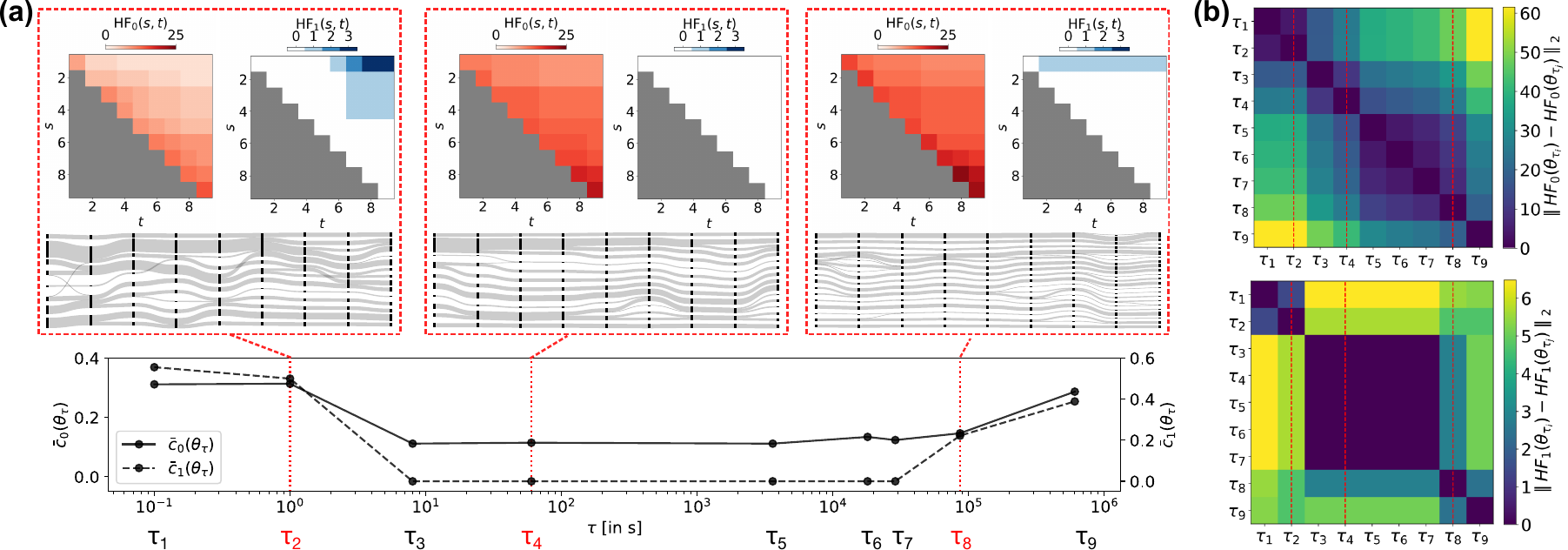}
    \caption{(a) Analysing non-hierarchical sequences of temporal partitions $\theta_{\tau_i}$ compiled from social interactions of a mice population over 9 weeks. Each $\theta_{\tau_i}$ is a sequence of social groupings $\theta_{\tau_i}(t)$ observed over week $t$. Different sequences of partitions are computed as a function of the parameter $\tau_i$. Sankey diagrams and MCbiF feature maps are shown for %
    $\theta_{\tau_i}$ at $\tau_i$ ($i=2,4,8$), identified as robust in  \cite{bovetFlowStabilityDynamic2022}. These three sequences exhibit different types of non-hierarchy, as shown by our topological feature maps and measures of average 0-conflict ($\bar c_0$) and average 1-conflict ($\bar c_1$). 
    (b) The robust $\theta_{\tau_i}$ ($i=2,4,8$) found in \cite{bovetFlowStabilityDynamic2022} 
    correspond to distinct topological characteristics of the sequences of partitions, as captured by the block structure in the distance between MCbiF Hilbert functions. The Hilbert distances also capture increased time reversibility in $\theta_{\tau_4}$ and $\theta_{\tau_8}$ due to the larger stability of social groupings over time (see Fig.~\ref{fig:wild_mice_flow_reversibility} in Appendix~\ref{appx:wild_mice}). 
    }
    \label{fig:wild_mice_summary}
\end{figure}

\subsection{Application to Real-World  Temporal Data}\label{sec:wild_mice}

In our final experiment, we apply MCbiF to temporal sequences of partitions computed from real-world contact data of free-ranging house mice. This data captures the changes in the social network structure of the rodents over time~\citep{bovetFlowStabilityDynamic2022}. 
Each partition $\theta_\tau(t)$ describes mice social groupings for $N=281$ individual mice at week $t\in[1,\dots, 9]$ for the nine weeks in the period 28 February--1 May 2017, so that the sequence $\theta_\tau$  captures the fine-graining of social groups from winter to spring.  
A partition sequence is computed at temporal resolution $\tau>0$, where the parameter $\tau$ modulates how fine the temporal community structure is~(Fig.~\ref{fig:wild_mice_n_clusters}). See~\cite{bovetFlowStabilityDynamic2022} and  Appendix~\ref{appx:wild_mice} for details. 
We then compare temporal partition sequences $\theta_{\tau_i}$ for different resolutions $\tau_i$, $i=1,\dots,9$, as given  in~\cite{bovetFlowStabilityDynamic2022}. 
For each $\theta_{\tau_i}$, we obtain $\mathrm{HF}_0$ and $\mathrm{HF}_1$ using the nerve-based MCbiF, which induces a 50-fold reduction in computation time 
due to a much lower number of simplices (260 simplices for the nerve-based MCbiF \textit{vs.} 116,700 for the original MCbiF).

\cite{bovetFlowStabilityDynamic2022} identified that the temporal resolutions $\tau_2=1\s$, $\tau_4=60\s$ and $\tau_8=24\h$ lead to robust sequences of partitions. 
Using the Hilbert distance (i.e., the $L_2$-norm on the 0- and 1-dimensional MCbiF Hilbert functions), we find these temporal resolutions to be representative of three distinct temporal regimes characterised by different degrees of non-hierarchy, as measured by $\bar c_0$ and $\bar c_1$ (Fig.~\ref{fig:wild_mice_summary}). In particular, high $\bar c_0$ indicates that mice tend to split off groups over time, and high $\bar c_1$ indicates that mice meet in overlapping subgroups but never jointly in one nest box. Note that $\theta_{\tau_2}$ has a strong non-hierarchical structure because the large-scale mice social clusters get disrupted in the transition to spring. In contrast, $\theta_{\tau_8}$ is more hierarchical as it captures the underlying stable social groups revealed by the higher temporal resolution. Yet, $\theta_{\tau_4}$ has the strongest hierarchy as indicated by a lower $\bar c_0$ and an absence of 1-conflicts ($\bar c_1=0$) and thus corresponds to a sweet spot in hierarchical organisation between low and high temporal resolutions. 
These findings also underpin the optimised Sankey diagrams in Figure~\ref{fig:wild_mice_summary}a. While the Sankey diagram for $\theta_{\tau_4}$ can be drawn without any crossings, the optimal Sankey diagram for $\theta_{\tau_8}$  has a crossing from week 1 to week 2, which resembles the `hourglass' 1-conflict (Fig.~\ref{fig:sankey_conflicts}d). This is predicted by our MCbiF features: $\mathrm{HF}_1(1,2)=1$ for $\theta_{\tau_8}$ implying one crossing between the first and second layer of the Sankey diagram that cannot be undone, as per Corollary~\ref{cor:bound_crossingnumber_HF1}.

\section{Conclusion}\label{sec:conclusion}

We have introduced the MCbiF, a novel bifiltration that encodes the cluster intersection patterns in multiscale, non-hierarchical sequences of partitions. Its stable Hilbert functions $\mathrm{HF}_k$ quantify the topological autocorrelation of the partition sequence $\theta$ and measure non-hierarchy in two complementary ways: at dimension $k=0$ it captures the absence of a maximum with respect to the refinement order (0-conflicts), whereas at dimension $k=1$ it captures the emergence of higher-order cluster inconsistencies (1-conflicts). This is summarised by the measures of average 0-conflict $\bar c_0(\theta)$ and average 1-conflict $\bar c_1(\theta)$, which are global, history-dependent and sensitive to the ordering of the partitions in $\theta$. 
The MCbiF extends the 1-parameter MCF defined by \cite{schindlerAnalysingMultiscaleClusterings2025} to a 2-parameter filtration, leading to richer algebraic invariants that describe the full topological information in $\theta$. It is important to remark that the MCbiF is independent of the chosen clustering algorithm and can be applied to any (non-hierarchical) sequence of partitions $\theta$. 
We demonstrate with numerical experiments that the MCbiF Hilbert functions provide topological feature maps for downstream machine learning tasks, which are shown to outperform %
both baseline features and representation learning on the raw data for regression and classification tasks on non-hierarchical sequences of partitions. %
Moreover, the grounding of MCbiF features in algebraic topology enhances interpretability, a crucial attribute for explainable AI and applications to real-world data.

\paragraph{Limitations and future work} 

Our analysis of the MCbiF MPH is restricted to dimensions 0 and 1 for simplicity. %
However, the analysis of topological autocorrelation for higher dimensions would allow us to capture more complex higher-order cluster inconsistencies and could be the object of future research. 
Furthermore, we focused here on Hilbert functions 
 as our topological invariants because of their computational efficiency and analytical simplicity, which facilitates our theoretical analysis.  In future work, we plan to use richer feature maps by exploiting the block decomposition of the MCbiF persistence module, which leads to barcodes~\citep{bjerkevikStabilityIntervalDecomposable2021}, or by using multiparameter persistence landscapes~\citep{vipondMultiparameterPersistenceLandscapes2020}.  
Another future direction is to use MCbiF to evaluate the consistency of assignments in consensus clustering~\citep{strehlClusterEnsemblesKnowledge2002,vega-ponsSURVEYCLUSTERINGENSEMBLE2011}. %
It can be shown that the values of the Hilbert function $\mathrm{HF}_k(s,t)$ further away from the diagonal ($s=t$) are more robust to permuting the order of partitions in $\theta$ (see Proposition~\ref{prop:hilbert_function_permutations} in Appendix~\ref{appx:proofs_7}).  %
In particular, $\mathrm{HF}_k(t_1,t_M)$ only depends on 
the set of distinct partitions in $\theta([t_1,\infty))$ and is independent of any permutation in their order. 
Hence, in future work, $\mathrm{HF}_k(t_1,t_M)$ could be used as an overall measure of consistency in $\theta$ in the vein of consensus clustering.  %
Finally, minimal cycle representatives of the MPH~\citep{liMinimalCycleRepresentatives2021} can  be used to localise %
1-conflicts in $\theta$, which is of interest to compute conflict-resolving partitions in consensus clustering, or to identify inconsistent assignments in temporal clustering~\citep{liechtiTimeResolvedClustering2020}.

\section*{Reproducibility Statement}

Detailed proofs of all theoretical results can be found in Appendix~\ref{appx:proofs} and extensive documentation of our experiments in Appendix~\ref{appx:details_experiments}. An implementation of our method and code to reproduce our numerical experiments is available at: \url{https://github.com/barahona-research-group/MCbiF}. The dataset studied in Section~\ref{sec:wild_mice} is publicly available at: \url{https://dataverse.harvard.edu/file.xhtml?fileId=5657692}.

\section*{Acknowledgments}
JS acknowledges support from the EPSRC (PhD studentship through the Department of Mathematics at Imperial College London). MB acknowledges support from EPSRC grant EP/N014529/1 supporting the EPSRC Centre for Mathematics of Precision Healthcare. This work benefited from discussions at the London-Oxford TDA seminar in November 2024. We thank Kevin Michalewicz, Asem Alaa, Christian Schindler and Jacopo Graldi for valuable discussions on computational aspects of this project, Alex Bovet for discussions on the temporal dataset studied in this paper, and Arne Wolf for discussions on the stability of multiparameter persistence modules. We also thank Heather Harrington for helpful discussions and for the opportunity to present work in progress at the Max Planck Institute of Molecular Cell Biology and Genetics Dresden (MPI-CBG) in July 2025. %

\subsubsection*{Author Contributions}
JS developed the mathematical theory, conducted all numerical experiments, and authored the initial draft of the paper. MB provided supervision, idea generation and guidance throughout the full research process. Both authors wrote and edited the final manuscript.

\bibliography{references}
\bibliographystyle{iclr2026_conference}

\newpage
\appendix

\section*{Appendices}

\section{Summary of Key Theoretical Results}\label{appx:theory_summary}

In Figure~\ref{fig:summary_theory}, we provide a summary of our theoretical results and their relationships.

\begin{figure}[htb!]
    \centering
    \includegraphics[width=1\linewidth]{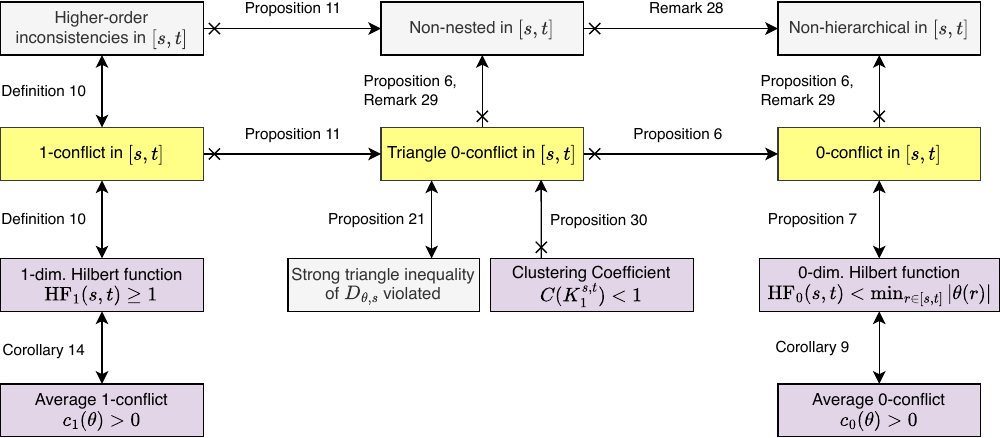}
    \caption{Summary of key theoretical results and their relationships. Double-headed arrows represent equivalences (iff), whereas single-headed arrows represent implications (if).}
    \label{fig:summary_theory}
\end{figure}

\section{Proofs of Theoretical Results and Additional Details}\label{appx:proofs}

\subsection{Proofs and Details for Section~\ref{sec:intro}}\label{appx:proofs_1}
We first state a simple fact about coarse-graining sequences of partitions.

\begin{remark}\label{lem:coarse-graining_mean_size}
    Let $\theta(t)_i$  denote the $i$-th cluster $C_i$ of $\theta(t)$.
    It is a simple fact that $\theta$ is coarse-graining iff the mean cluster size is non-decreasing, i.e., $\frac{1}{|\theta(s)|}\sum_i^{|\theta(s)|}|\theta(s)_i|\le \frac{1}{|\theta(t)|}\sum_j^{|\theta(t)|}|\theta(t)_j|$ for $s\le t$. The proof follows directly from the fact that $\sum_i^{|\theta(s)|}|\theta(s)_i|=\sum_i^{|\theta(t)|}|\theta(t)_i|=N$.
\end{remark}

\subsection{Proofs and Details for Section~\ref{sec:mcbif_theory}}\label{appx:proofs_4}

We provide a proof for the multi-criticality of the MCbiF filtration stated in Proposition~\ref{prop:mcbif_is_bifiltration}. 

\begin{proof}[Proof of Proposition~\ref{prop:mcbif_is_bifiltration}]
The MCbiF is indeed a bifiltration because $K^{s,t}\subseteq K^{s',t'}$ if $s\ge s'$ and $t\le t'$. See Fig.~\ref{fig:mcbif_filtration_diagram} for the triangular diagram of the MCbiF filtration, where arrows indicate inclusion maps. The MCbiF is uniquely defined by its values on the finite grid $[t_1,\dots,t_M]\times [t_1,\dots,t_M]$ because $\theta$ has change points $t_1<\dots<t_M$. It is a multi-critical bifiltration because for $x\in X$ we have $[x]\in K^{s,t}$ for all $s, t \in [t_1,\infty)^{\mathrm{op}}\times [t_1,\infty)$. In particular, $x\in K^{t_1,t_1}$ and $x\in K^{t_1+\delta,t_1+\delta}$ for $\delta>0$ but $(t_1,t_1)$ and $(t_1+\delta,t_1+\delta)$ are incomparable in the poset $[t_1,\infty)^{\mathrm{op}}\times [t_1,\infty)$.
\end{proof}

\begin{figure}[htb!]   %
  \centering
  \tikzcdset{
      BoldHook/.style={hook, line width=0.9pt}   %
    }
  \begin{tikzcd}[row sep=1.3em, column sep=1.3em]
            & \dots & \dots & \dots & & \\
            & K^{t,t} \arrow[hookrightarrow]{r}\arrow[hookrightarrow]{u}
                & K^{t,t'} \arrow[hookrightarrow]{r}\arrow[hookrightarrow]{u}
                & K^{t,t''} \arrow[hookrightarrow]{r}\arrow[hookrightarrow]{u}
                & \dots \\
            && K^{t',t'} \arrow[hookrightarrow]{r}\arrow[hookrightarrow]{u}
                & K^{t',t''} \arrow[hookrightarrow]{r}\arrow[hookrightarrow]{u}
                & \dots \\
            &&& K^{t'',t''} \arrow[hookrightarrow]{r}\arrow[hookrightarrow]{u}
                & \dots             
  \end{tikzcd}
  \caption{Triangular commutative diagram of the MCbiF for $t_1\le t\le t' \le t''$. The arrows indicate inclusion maps between simplicial complexes.}\label{fig:mcbif_filtration_diagram}
\end{figure}

\begin{remark}\label{rem:mcbif_row_is_mcf}
    By fixing $s:= t_1$ (i.e., the top row in the commutative MCbiF diagram), we recover the 1-parameter Multiscale Clustering Filtration (MCF) defined by \citet{schindlerAnalysingMultiscaleClusterings2025}. MCF was designed to quantify non-hierarchies in coarse-graining sequences of partitions and thus cannot capture fine-graining. For example, a large cluster $C \in \theta(s')$ prevents MCF from detecting cluster assignment conflicts between elements $x,y \in C$ for $t\ge s'$, see Section~\ref{sec:mcbif_mph_hf_conflicts}. In particular, the MCF is not a complete invariant of $\theta$. %
    In contrast, MCbiF is a complete invariant of $\theta$ and encodes the full topological autocorrelation contained in $\theta$ by varying both the starting scale $s$ and the lag $t-s$.
\end{remark}

Next, we provide formal definitions for algebraic properties of persistence modules, see \cite{botnanIntroductionMultiparameterPersistence2023} for details. 

\begin{definition}\label{eq:blocks}
    For partially ordered sets $P_1,P_2$, we call an interval $I\subseteq P_1 \times P_2$ a \textit{block} if it can be written as one of the following types:
    \begin{enumerate}
        \item Birth quadrant: $I=S_1\times S_2$ for downsets $S_1\subseteq P_1$ and $S_2\subseteq P_2$.
        \item Death quadrant: $I=S_1\times S_2$ for upsets $S_1\subseteq P_1$ and $S_2\subseteq P_2$.
        \item Vertical band: $I=S_1\times P_2$ for an interval $S_1\subseteq P_1$.
        \item Horizontal band: $I=P_1\times S_2$ for an interval $S_2\subseteq P_2$.
    \end{enumerate}
\end{definition}

\begin{definition} Let $\mathbf{Vect}$ denote the category of $\boldsymbol{k}$-vector spaces for a fixed field $\boldsymbol{k}$. For a partially ordered set $P$, a \textit{$P$-indexed persistence module} is a functor $F:P\mapsto \mathbf{Vect}$. We say that:
    \begin{enumerate}[a)]
        \item $F$ is called \textit{pointwise finite-dimensional} if $\dim(F_{\boldsymbol{a}})<\infty$ for all $\boldsymbol{a}\in P$.
        \item $F$ is called \textit{finitely presented} if there exists a morphism of free modules $\phi_1:F_1\rightarrow F_1$ such that $\mathrm{coker}(\phi_1)\cong F$ and $F_0$ and $F_1$ are finitely generated.
        \item $F$ is called \textit{block-decomposable} if it decomposes into blocks $F\bigoplus_{B\in\mathcal{B}(F)}\boldsymbol{k}_B$ where $\mathcal{B}(F)$ is a multiset of blocks (see Definition~\ref{eq:blocks}) that depends on $F$.
    \end{enumerate}
\end{definition}

We can now provide the proof for Proposition~\ref{prop:block_decomposable}, which shows that the MCbiF persistence module (see Fig.~\ref{fig:mcbif_persistence_diagram}) is pointwise finite-dimensional, finitely presented and block-decomposable.  The proof relies on the equivalent nerve-based construction of the MCbiF (see Proposition~\ref{prop:equivalence_nerve}), and the exactness of the persistence module from which block-decomposability follows~\citep{cochoyDecompositionExactPfd2020}.

\begin{proposition}\label{prop:block_decomposable}
    For any $k\le \dim K$, the MCbiF persistence module $H_k(K^{s,t})$ is pointwise finite-dimensional, finitely presented and block-decomposable.
\end{proposition}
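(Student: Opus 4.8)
The first two properties follow from finiteness, and the real content is block decomposability.

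By Proposition~\ref{prop:mcbif_is_bifiltration}, the module is determined by its restriction to the finite grid $P$, and since $X$ is finite every $K^{s,t}$ is a finite simplicial complex; hence each $H_k(K^{s,t})$ is a finite-dimensional vector space and the module is pointwise finite-dimensional. Over a \emph{finite} poset, pointwise finite-dimensionality already forces finite presentation: regarding the module as a finite-dimensional module over the (finite-dimensional) incidence algebra of $P$, I would choose bases of the finitely many spaces $H_k(K^{s,t})$ to obtain a surjection from a finitely generated free module, observe that its kernel is again pointwise finite-dimensional over $P$ and hence finitely generated, and thereby produce a finite presentation.

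For block decomposability, the plan is to locate the homology module within the classification of block-decomposable modules over the triangular index poset $P$, the two-parameter home of zigzag persistence and the setting in which interval/block decompositions are available (Section~\ref{sec:background_mph}). The structural feature I would exploit is that every simplex is born on the diagonal: writing $A_\sigma=\{m : \sigma\in K^{t_m,t_m}\}$ for the set of scales realizing $\sigma$, one has $\sigma\in K^{s,t}$ if and only if the interval $[s,t]$ meets $A_\sigma$, so $\mathrm{birth}(\sigma)\subseteq\{(t_m,t_m)\}$ and each chain module is a direct sum of interval (``staircase'') modules with minimal bigrades on the diagonal. Moreover each $K^{t_m,t_m}=\bigcup_{C\in\theta(t_m)}\Delta C$ is a disjoint union of contractible solid simplices, so every $K^{s,t}$ is a union of contractible blobs whose homology is governed purely by their overlap pattern across scales. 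I would then show that the resulting homology module satisfies the exactness/structural condition characterizing block-decomposable modules over $P$ and invoke the corresponding classification theorem.

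The main obstacle, and where non-hierarchy really bites, is that a simplex may be realized, vanish, and reappear, so $A_\sigma$ need not be an interval of scales. Consequently $\mathcal{M}$ is genuinely multi-critical and is not the sublevel filtration of any real-valued function: the complexes $K^{s,r}$ and $K^{r,t}$ can share simplices absent from $K^{r,r}$, so the Mayer--Vietoris square on $K^{s,r},K^{r,t},K^{r,r}$ is not a pushout and the exactness needed for block decomposability cannot simply be imported from a levelset/zigzag presentation. I therefore expect the crux to be establishing exactness directly for the homology module---either square by square, using the disjoint-simplex description of each fiber, or by an explicit induction on the number of change points $M$ that peels off the short-lived classes supported on sub-triangles near the diagonal together with a persistent remainder, matching what one computes in small non-hierarchical examples.
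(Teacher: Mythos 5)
Your treatment of pointwise finite-dimensionality and finite presentation is fine (the paper handles these the same way, via finiteness of the grid from Proposition~\ref{prop:mcbif_is_bifiltration}), and your overall strategy for block decomposability — establish exactness of the module and invoke the classification theorem of Cochoy--Oudot — is exactly the paper's strategy. You also correctly diagnose the obstruction: $\mathcal{M}$ is multi-critical, and for $t\le t'\le t''\le t'''$ the intersection $K^{t,t''}\cap K^{t',t'''}$ can strictly contain $K^{t',t''}$ (a pair of elements can be co-clustered at some scale before $t'$ and again at some scale after $t''$ without ever being co-clustered in $[t',t'']$), so the relevant squares of $\mathcal{M}$ are not Mayer--Vietoris squares.

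However, your proposal stops at precisely this hardest step, and the two workarounds you sketch (square-by-square exactness for $\mathcal{M}$ itself, or induction on $M$ peeling off short-lived classes) run head-on into the obstruction you just identified, with no mechanism offered to overcome it. The paper's key move, which is missing from your argument, is to change models: by Proposition~\ref{prop:equivalence_nerve}, the nerve-based MCbiF $\tilde{\mathcal{M}}$ has the same persistence module as $\mathcal{M}$, and its vertex sets at different scales are \emph{formally disjoint} (clusters at scales in $[t_\ell,t_m]$ are indexed by the disjoint union $A(\ell,m)=V_\ell\uplus\dots\uplus V_m$, so a cluster reappearing at a later scale is a \emph{new} vertex). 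This disjointness gives, on the nose, both $\tilde K^{t,t'''}=\tilde K^{t,t''}\cup\tilde K^{t',t'''}$ and $\tilde K^{t',t''}=\tilde K^{t,t''}\cap\tilde K^{t',t'''}$, so every square of $\tilde{\mathcal{M}}$ is a genuine Mayer--Vietoris square, exactness of the homology module follows, and Cochoy--Oudot's Theorem~9.6 yields block decomposability; the conclusion transfers to $\mathcal{M}$ by the isomorphism of modules. The exactness you ``expect'' does in fact hold, but the only available proof of it goes through this change of filtration (or an equivalent device for resolving the multi-criticality), so as written your argument has a genuine gap at its central step.
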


\begin{proof}[Proof of Proposition~\ref{prop:block_decomposable}]
   The MCbiF module is pointwise finite-dimensional because the homology groups of finite simplicial complexes are finite. As the MCbiF is defined uniquely by its values on a finite grid (Proposition~\ref{prop:mcbif_is_bifiltration}), its persistence module consists of finitely many vector spaces and finitely many linear maps between them, hence it is finitely presented.
    
    To prove block-decomposability, we use the nerve-based MCbiF $(\tilde K^{s,t})_{t_1\le s \le t}$, which leads to the same persistence module, see Proposition~\ref{prop:equivalence_nerve}. 
    As the module is uniquely defined by its values on a finite grid, we can use Theorem 9.6 by \cite{cochoyDecompositionExactPfd2020} that implies block-decomposability if the persistence module is \textit{exact}. Hence, it suffices to show that for all $t_1\le t\le t' \le t''\le t'''$ the diagram
    \begin{equation*}
    \begin{tikzcd}[row sep=1em,column sep=1em]
    & & H_k(\tilde K^{t,t''}) \arrow{r} & H_k(\tilde K^{t,t'''})\\
    && H_k(\tilde K^{t',t''}) \arrow{r}\arrow{u}  & H_k(\tilde K^{t',t'''}) \arrow{u}
    \end{tikzcd}\end{equation*}
    induces an exact sequence:
    \begin{equation}\label{eq:mcbif_exact_sequence}
        H_k(\tilde K^{t',t''}) \rightarrow  H_k(\tilde K^{t,t''}) \oplus H_k(\tilde K^{t',t'''}) \rightarrow H_k(\tilde K^{t,t'''})
    \end{equation}
    By construction of the MCbiF, $\tilde K^{t,t'''}=\tilde K^{t,t''} \cup \tilde K^{t',t'''}$. Furthermore,  $\tilde K^{t,t''}=\tilde K^{t,t'}\cup \tilde K^{t',t''}$ and $\tilde K^{t',t'''}=\tilde K^{t',t''}\cup \tilde K^{t'',t'''}$. As $\theta$ is a piecewise-constant function with change points $t_1<\dots<t_M$, we can assume without loss of generality, $t=t_k$, $t'=t_\ell$, $t''=t_m$, $t'''=t_n$ for change points $t_k<t_\ell<t_m<t_n$ of $\theta$ such that $A(k,\ell)\cap A(m,n)=\emptyset$. %
    Hence, $\tilde K^{t,t'}\cap \tilde K^{t'',t'''}=\emptyset$ and $\tilde K^{t',t''}=\tilde K^{t,t''} \cap \tilde K^{t',t'''}$. This means that Eq.~\eqref{eq:mcbif_exact_sequence} is a Mayer-Vietoris sequence for all $k\ge 0$, implying exactness~\citep[p. 149]{hatcherAlgebraicTopology2002} and proving the block decomposability~\citep[Theorem 9.6]{cochoyDecompositionExactPfd2020}. 
\end{proof}

\begin{figure}[htb!]   %
  \centering
  \tikzcdset{
      BoldHook/.style={hook, line width=0.9pt}   %
    }
 \begin{tikzcd}[row sep=1em,column sep=1em]
&\dots &\dots &\dots & &\\
&H_k(K^{t,t}) \arrow{r}\arrow{u}  & H_k(K^{t,t'}) \arrow{r}\arrow{u}  & H_k(K^{t,t''})\arrow{r}\arrow{u}  & \dots \\
&& H_k(K^{t',t'}) \arrow{r}\arrow{u}  & H_k(K^{t',t''}) \arrow{r}\arrow{u}  & \dots \\
&&& H_k(K^{t'',t''}) \arrow{r}\arrow{u} & \dots
\end{tikzcd}
  \caption{Multiparameter persistence module of the MCbiF for $t_1\le t\le t' \le t''$. The arrows indicate linear maps between vector spaces.}\label{fig:mcbif_persistence_diagram}
\end{figure}

\subsubsection{Proofs and Details for Section~\ref{sec:mcbif_mph_hf_conflicts}}\label{appx:proofs_4_1}

\begin{remark}
It follows directly from the definitions that a 
hierarchical sequence $\theta$ is always nested. However, nestedness does not necessarily imply hierarchy, as illustrated by the example in Fig.~\ref{fig:sankey_conflicts}b.
\end{remark}

We continue with the proof of Proposition~\ref{prop:0_conflict_hierarchy_nestedness} that relates 0-conflicts to hierarchy and triangle 0-conflicts to nestedness.

\begin{proof}[Proof of Proposition~\ref{prop:0_conflict_hierarchy_nestedness}]
    
    (ii) 
    If $\theta$ has a 0-conflict then $\exists r_1,r_2\in [s,t]$  such that $\theta(r_1)\nleq\theta(r_2)$ and $\theta(r_1)\ngeq\theta(r_2)$, otherwise $\theta([s,t])$ would have a maximum. Hence, $\theta$ is not hierarchical in $[s,t]$.

    (iii) Let us first assume that $\theta$ is coarse-graining, i.e., $|\theta(t)|\le |\theta(r)|$ for all $r\in[s,t]$. We show that no 0-conflict in $[s,t]$ implies that $\theta$ is hierarchical in $[s,t]$. Let $r_1,r_2\in[s,t]$ with $r_1\le r_2$, then the subposet $\theta([r_1,r_2])$ has a maximum because of the absence of a 0-conflict, and the maximum is given by $\theta(r_2)$ due to coarse-graining. Hence, $\theta(r_1)\le \theta(r_2)$. As $r_1,r_2$ were chosen arbitrarily, this implies that $\theta$ is hierarchical in $[s,t]$. The argument is analogous for the case that $\theta$ is fine-graining.
    
    (iv) Let $x,y,z\in X$ be in a triangle 0-conflict. In particular, $x\sim_{r_1}y\sim_{r_2}z$ with $x\neq y$, $x\neq z$ and $y\neq z$. Hence, there are $C\in\theta(r_1)$ and $C'\in\theta(r_2)$ such that $x,y\in C$ and $y,z\in C'$, as well as $z\notin C$ and $x\notin C'$. This implies $\{x\}\in C\setminus C'$, $\{z\}\in C'\setminus C$ and $\{y\}\in C\cap C'$, showing that $C$ and $C'$ are non-nested. Hence, $\theta$ is non-nested in $[s,t]$.
    
    (i) Moreover, $\nexists r\in[s,t]$ such that $x\sim_r\sim_r y$. In particular, $\nexists r\in [s,t]$ such that $\nexists C''\in\theta(r)$ with $C\subseteq C''$ and $C\subseteq C''$. Hence, $\nexists r\in[s,t]$ such that $\theta(r_1)\le\theta(r)$ and $\theta(r_2)\le \theta(r)$, implying that the subposet $\theta([s,t])$ has no maximum. This shows that every triangle 0-conflict is also a 0-conflict, proving statement (i). Note that the opposite is not true, as illustrated by the example in Fig.~\ref{fig:sankey_conflicts}b.
\end{proof}

\begin{remark}
    Non-nestedness and non-hierarchy do not imply the presence of a 0-conflict. To see this, consider the simple counter-example given by $\theta(0)=\{\{x,y\},\{z\}\}$,  $\theta(1)=\hat 1$, $\theta(2)=\{\{x\},\{y,z\}\}$, which is non-nested but the partition $\theta(1)$ is the maximum of the subposet $\theta([0,1,2])$.
    This illustrates the need for the additional assumption of coarse- or fine-graining of $\theta$ in Proposition~\ref{prop:0_conflict_hierarchy_nestedness}~(iii) for the condition of no 0-conflict to imply hierarchy.
    \end{remark}

We now provide a proof for Proposition~\ref{prop:hf0_properties} on properties of the 0-dimensional Hilbert function of the MCbiF.

\begin{proof}[Proof of Proposition~\ref{prop:hf0_properties}]
(i) $\mathrm{HF}_0(s,t)$ is equal to the number of connected components of $K^{s,t}$. Let $r'\in[s,t]$ such that $c=|\theta(r')|=\min_{r\in[s,t]}|\theta(r)|$. We can represent $\theta(r)=\{C_1,\dots,C_c\}$ and by construction $\Delta C\in K^{s,t}$ for all $C\in \theta(r)$. Hence, if two elements $x,y\in X$ are in the same cluster $C\in\theta(r)$ then $[x,y]\in K^{s,t}$ and the 0-simplices $[x],[y]\in K^{s,t}$ are in the same connected component. As $\theta(r)$ has $c$ mutually disjoint clusters, this means that there cannot be more than $c$ disconnected components in $K^{s,t}$ and $\mathrm{HF}_0(s,t)\le c=|\theta(r')|$. As $r'\in[s,t]$ was chosen arbitrarily, this implies $\mathrm{HF}_0(s,t)\le \min_{r\in[s,t]}|\theta(r)|$.

We prove statement (ii) by the contrapositive and show that the following two conditions are equivalent:
\begin{enumerate}
    \item [\textbf{C1}:] %
    $\mathrm{HF}_0(s,t) = \min_{r\in[s, t]}|\theta(r)|$. %
    \item [\textbf{C2}:] $\exists r\in[s,t]$
    such that $\theta(r')\le \theta(r)$, $\forall r' \in [s, t]$. %
\end{enumerate}
Note that \textbf{C2} is equivalent to there is no 0-conflict in $[s,t]$. ``$\Longleftarrow$''  consider first that \textbf{\textbf{C2}} is true and $\theta(r)$ is an upper bound for the partitions $\theta(r')$, $r'\in [s, t]$. This implies that $\forall r'\in[s,t]$ we have that $\forall C'\in\theta(r')$ there $\exists C\in\theta(r)$ such that $C'\in C$. By construction of the MCbiF (Eq.~\ref{eq:mcbif}) this implies $\forall \sigma' \in K^{s,t}$ there $\exists \sigma \in K^{r,r}$ such that $\sigma'\subseteq\sigma$. This means $K^{s,t}\subseteq K^{r,r}$ and thus $K^{s,t}=K^{r,r}$. As $K^{r,r}$ has $|\theta(r)|$ disconnected components this implies $\mathrm{HF}_0(s,t)=|\theta(r)|$, showing  \textbf{C1}. 

``$\Longrightarrow$''  To prove the other direction, assume that \textbf{C1} is true. Then there exists $r\in[s,t]$ such that $c:=\mathrm{HF}_0(s,t)=|\theta(r)|$ with $|\theta(r)|=\min_{q\in[s,t]}|\theta(q)|$. 
In particular, the disconnected components of $K^{s,t}$ are given by the clusters of $\theta(r)$ denoted by $C_1,\dots, C_c$. Let $r'\in [s,t]$ and $C'\in\theta(r)$. Then $\exists i\in [1,\dots,c]$ such that $C'\subseteq C_i\in\theta(r)$ because otherwise the solid simplex $\Delta C'$ would connect two solid simplices in $\{\Delta C_1,\dots,\Delta C_c\}$, contradicting that they are disconnected in $K^{s,t}$. Hence, the clusters of $\theta(r')$ are all subsets of cluster of $\theta(r)$, implying $\theta(r')\le \theta(r)$. As $r'\in[s,t]$ was chosen arbitrary this shows \textbf{C2}.

We finally prove statement (iii). ``$\Longrightarrow$'' Note that  $\mathrm{HF}_0(s,t) = |\theta(r)|$ implies $|\theta(r)|=\min_{r'\in[s, t]}|\theta(r')|$ according to (i). Then (ii) shows that \textbf{C2} is true for $r$, i.e., $\theta(r)$ is the maximum of the subposet $\theta([s,t])$.
``$\Longleftarrow$'' The other direction follows directly from the proof of (ii).

\end{proof}

We next prove Corollary~\ref{cor:hf0_hierarchy} about some properties of the average 0-conflict, which follows immediately from Proposition~\ref{prop:hf0_properties}.
\begin{proof}[Proof of Corollary~\ref{cor:hf0_hierarchy}]
We begin with the proof of statement (i). If $\theta$ is hierarchical in $[s,t]$ then $\theta$ is either coarse- or fine-graining. Assume first that $\theta$ is coarse-graining, then $\theta(s')\le \theta(t)$ for all $s'\in[s,t]$ and together with hierarchy, this implies that $\theta(t)$ is an upper bound of the subposet $\theta([s,t])$. Hence, Proposition~\ref{prop:hf0_properties}~(iii) shows that $\mathrm{HF}_0(s,t)=|\theta(t)|$. Moreover, $\mathrm{HF}_0(s,t)=\min(|\theta(s)|,|\theta(t)|)$ because coarse-graining implies $|\theta(s)|\ge|\theta(t)|$. A similar argument also shows $\mathrm{HF}_0(s,t)=|\theta(s)|=\min(|\theta(s)|,|\theta(t)|)$ if $\theta$ is fine-graining.

We continue with proving (ii). $\bar c_0(\theta)>0$ is equivalent to $\exists s,t\in[t_1,t_M]$ such that $\mathrm{HF}_0(s,t) < \min_{r\in[s, t]}|\theta(r)|$, according to Definition~\ref{def:avg_0_conflict}. This is again equivalent to $\exists s,t\in[t_1,t_M]$ such that $\theta$ has a 0-conflict in $[s,t]$, according to Proposition~\ref{prop:hf0_properties}~(ii).

We finally prove statement (iii). ``$\Longrightarrow$'' $\bar c_0(\theta)$ means that $\theta$ has no 0-conflict in $[t_1,\infty)$. As $\theta$ is also coarse- or finge-graining, Proposition~\ref{prop:0_conflict_hierarchy_nestedness}~(iii) then shows that $\theta$ is strictly hierarchical. ``$\Longleftarrow$'' If $\theta$ is strictly hierarchical, then it has no 0-conflicts according to Proposition~\ref{prop:0_conflict_hierarchy_nestedness}~(ii) and statement (ii) implies that $\bar c_0(\theta)=0$.

\end{proof}

We can further detect triangle 0-conflicts by analysing the graph-theoretic properties of the MCbiF 1-skeleton $K_1^{s,t}$. Recall that the \textit{clustering coefficient} $\mathrm{C}$ of a graph is defined as the ratio of the number of triangles to the number of paths of length 2 in the  graph
~\citep{luceMethodMatrixAnalysis1949, newmanNetworks2018}.
\begin{proposition}\label{prop:star_clustering_coeff} 
    $\mathrm{C}(K^{s,t}_1)<1$ iff there is a triple $x,y,z\in X$ that leads to a triangle 0-conflict for  $[s,t]$, 
    and which is not a cycle, i.e., additionally to property b) in 
    Definition~\ref{def:conflict} we also have $\nexists r_3\in [s,t]$: $x\sim_{r_3}z$.
\end{proposition}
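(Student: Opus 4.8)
The plan is to reduce the claim to the standard graph-theoretic behaviour of the global clustering coefficient and then translate between the graph $K^{s,t}_1$ and the co-clustering relations $\sim_r$. First I would record the edge dictionary coming from the definition of the MCbiF (Eq.~\ref{eq:mcbif}): $K^{s,t}_1$ has vertex set $X$, and an edge $[u,v]$ is present precisely when $u,v$ lie in one common cluster at some single scale of the interval, i.e.\ $[u,v]\in K^{s,t}_1$ iff $\exists\,r\in[s,t]:u\sim_r v$. I would then invoke the standard property of the global clustering coefficient: $\mathrm{C}\le 1$, with equality iff every path of length $2$ closes into a triangle (a $3$-clique). Hence $\mathrm{C}(K^{s,t}_1)<1$ iff $K^{s,t}_1$ contains an \emph{open} path of length $2$, namely three distinct vertices $x,y,z$ with $[x,y],[y,z]\in K^{s,t}_1$ and $[x,z]\notin K^{s,t}_1$. (If no path of length $2$ exists, $\mathrm{C}$ is conventionally undefined and both sides fail, so I would assume at least one such path.)

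The single observation doing the real work is this: if $x$ and $z$ are never co-clustered in $[s,t]$ (so $[x,z]\notin K^{s,t}_1$), then there is no scale at which $x,y,z$ all lie in one cluster, since such a scale would force $x\sim_r z$. Thus the open-wedge condition automatically entails the ``never jointly clustered'' requirement built into a triangle 0-conflict (Definition~\ref{def:conflict}b)).

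With this in hand the equivalence unwinds symmetrically through the edge dictionary. For one direction, a triangle 0-conflict $(x,y,z)$ that is not a cycle gives $r_1,r_2\in[s,t]$ with $x\sim_{r_1}y$, $y\sim_{r_2}z$ and $\nexists r:x\sim_r z$; the first two produce the edges $[x,y],[y,z]$, the last forbids $[x,z]$, and the three vertices are distinct (reflexivity of $\sim_r$ would otherwise supply the missing edge), so $x\!-\!y\!-\!z$ is an open path of length $2$ and $\mathrm{C}(K^{s,t}_1)<1$. Conversely, an open path of length $2$ translates into $\exists r_1:x\sim_{r_1}y$, $\exists r_2:y\sim_{r_2}z$ and $\nexists r:x\sim_r z$, and by the observation above this triple is a triangle 0-conflict satisfying the extra non-cycle hypothesis.

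The point I expect to need the most care---and the main conceptual obstacle---is distinguishing a \emph{graph} triangle (a $3$-clique in $K^{s,t}_1$) from a \emph{filled} $2$-simplex in $K^{s,t}$. Since the clustering coefficient counts $3$-cliques, a triple whose three pairwise edges arise at different scales while the triple is never jointly clustered still counts as a \emph{closed} wedge, even though $[x,y,z]$ is not a solid simplex; this is exactly the ``cycle'' the statement excludes, and it is why the correct non-cycle condition is $\nexists r:x\sim_r z$ rather than merely the weaker ``never jointly clustered'' condition. I would also note, as a sanity check, that within a single cluster $C$ the simplex $\Delta C=2^C$ is solid, so every within-scale wedge is already closed; consequently any open wedge must be \emph{cross-scale}, which matches the intended interpretation of the result.
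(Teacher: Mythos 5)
Your proof is correct and takes essentially the same route as the paper's: translate edges of $K^{s,t}_1$ into the relations $\sim_r$, then use the fact that $\mathrm{C}<1$ exactly when some path of length $2$ fails to close into a graph triangle. In fact your write-up is more complete than the paper's own proof, which only spells out the direction from $\mathrm{C}(K^{s,t}_1)<1$ to the existence of the non-cycle triangle 0-conflict and leaves the converse, the distinctness of $x,y,z$, and the degenerate no-wedge case implicit, all of which you handle explicitly (your observation that the missing edge $[x,z]$ already forces the ``never jointly clustered'' condition is precisely the step that makes the equivalence work).
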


\begin{proof}[Proof of Proposition~\ref{prop:star_clustering_coeff}]
   Assume that $\mathcal{C}(K_1^{s,t})<1$. Then there exist $x,y,z\in X$ that form a path of length 2 but no triangle, see \cite{newmanNetworks2018} for details on the clustering coefficient. Without loss of generality, $[x,y],[y,z]\in K_1^{s,t}$ but $[x,z]\notin K_1^{s,t}$. This implies $\exists r_1,r_2\in[s,t]$: $x\sim_{r_1}y\sim_{r_2}z$ and $\nexists r\in[s,t]$: $x\sim_r z$. Hence, $x,y,z$ lead to a triangle 0-conflict.
\end{proof}

Let us consider the graph generated as the disjoint union of all clusters from partitions in [s,t] as cliques. This graph is equivalent to the MCbiF 1-skeleton $K_1^{s,t}$.
Proposition~\ref{prop:star_clustering_coeff} shows that the clustering coefficient of this graph 
can be used to detect triangle 0-conflicts that are not cycles.
To be able to detect triangle 0-conflicts that correspond to non-bounding cycles, we turn to the 1-dimensional homology. %

We next prove Proposition~\ref{prop:1-conflict} on 1-conflicts.

\begin{proof}[Proof of Proposition~\ref{prop:1-conflict}]

Statement (i) follows directly from the definition of 1-conflicts that $\mathrm{HF}_1(s,t)=\dim[H_k(K^{s,t})]\ge 1$ iff $\theta$ has a 1-conflict. 

We next prove statement (ii): If $\mathrm{HF}_1(s,t)\ge 1$ there exists a 1-cycle $z=[x_1,x_2]+\dots+[x_{n-1},x_n]+[x_n,x_1]$ that is non-bounding, i.e., $h:=[z]\neq 0$ in $H_1(K^{s,t})$, see Appendix~\ref{appx:simplicial_homology} for details. Case 1: Assume $\nexists r\in [s,t]$: $x_1\sim_r x_2\sim_r x_3$, then it follows immediately that $x_1,x_2,x_3$ lead to a triangle 0-conflict. Case 2: Assume $\exists r\in [s,t]$: $x_1\sim_r x_2\sim_r x_3$. As $[z]\neq 0$ there  exists a 1-cycle $\tilde z=[\tilde x_1,\tilde x_2]+\dots++[\tilde x_{m-1},\tilde x_m]+[\tilde x_m,\tilde x_1]\in Z_1(K^{s,t})$ such that $\tilde z$ is homologous to $z$, i.e., $\tilde z= z + \partial_2 w$ for $w\in C_2(K^{s,t})$, and such that $\nexists r\in [s,t]$: $\tilde x_1\sim_r \tilde x_2\sim_r \tilde x_3$. In particular, $\tilde x_1,\tilde x_2,\tilde x_3$ lead to a triangle 0-conflict. 

We finally prove statement (iii): If $\theta$ is hierarchical, then it has no 0-conflicts according to Corollary~\ref{cor:hf0_hierarchy}. Hence, $\theta$ also has no triangle 0-conflict in $[s,t]$ and so (i) implies that $\mathrm{HF}_1(s,t)=0$.
\end{proof}

\begin{remark}
Proposition~\ref{prop:1-conflict} states that every 1-conflict is a triangle 0-conflict. However, not every (triangle) 0-conflict is a 1-conflict, see Example~\ref{ex:toy_example}. Note also that several triangle 0-conflicts in the sequence $\theta$ can lead to a 1-conflict, %
when the triangle 0-conflicts are linked together in such a way as to form a non-bounding cycle, see Example~\ref{ex:toy_example}. We can test for these systematically using $\mathrm{HF}_1$.
\end{remark}

\begin{remark}
While 0-conflicts ($\bar c_0(\theta)>0$) can be defined in relation to the refinement order that gives rise to the partition lattice, the partition lattice cannot be used to detect higher-order cluster inconsistencies (1-conflicts), which can be captured and quantified   
instead by $\mathrm{HF}_1$  and %
the average measure $\bar c_1(\theta)$.
\end{remark}

\subsubsection{Proofs  and Details for Section~\ref{sec:nerve_mcbif_sankey}}\label{appx:proofs_4_2}

To prove the equivalence between the MPH of the MCbiF and nerve-based MPH, we can use a Persistent Nerve Lemma for abstract simplicial complexes by \citet{schindlerAnalysingMultiscaleClusterings2025}.

\begin{lemma}[\citet{schindlerAnalysingMultiscaleClusterings2025}, Lemma 32]\label{lemma:persistent_nerve_lemma}
    Let $K\subseteq K^\prime$ be two finite abstract simplicial complexes and $\{K_\alpha\}_{\alpha\in A}$ and $\{K^\prime_\alpha\}_{\alpha\in A}$ be subcomplexes that cover $K$ and $K^\prime$ respectively, based on the same finite parameter set such that $K_\alpha \subseteq K^\prime_\alpha$ for all $\alpha\in A$. 
    Let $\tilde K$ denote the nerve $\mathcal{N}\left(\{|K_\alpha|\}_{\alpha\in A}\right)$ and $\tilde K^\prime$ the nerve $\mathcal{N}\left(\{|K^\prime_\alpha|\}_{\alpha\in A}\right)$. If the intersections $\bigcap_{i=0}^k |K_{\alpha_i}|$ and $\bigcap_{i=0}^k |K^\prime_{\alpha_i}|$ are either empty or contractible for all $k \in \mathbbm{N}$ and for all $\alpha_0,...,\alpha_k\in A$, then there exist homotopy equivalences $\tilde K\rightarrow |K|$ and $\tilde K^\prime \rightarrow |K^\prime|$ that commute with the canonical inclusions $|K|\hookrightarrow |K^\prime|$ and $\tilde K\hookrightarrow \tilde K^\prime$.
\end{lemma}

See \citet[Lemma 32]{schindlerAnalysingMultiscaleClusterings2025} for a proof. 

Next, we provide the proof about the equivalence between MCbiF and nerve-based MCbiF.

\begin{proposition}\label{prop:equivalence_nerve}
    The bifiltrations $\mathcal{M}$ and $\tilde{\mathcal{M}}$ lead to the same persistence module.
\end{proposition}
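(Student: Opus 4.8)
The plan is to recognise the nerve-based complex $\tilde K^{s,t}$ as the nerve of a canonical \emph{good cover} of $K^{s,t}$, and then to promote the classical Nerve Theorem to an isomorphism of \emph{persistence modules}. For a fixed bigrade $(s,t)$, cover $K^{s,t}$ by the family of solid simplices $\mathcal{U}^{s,t}:=\{\Delta C : C\in\theta(r),\ s\le r\le t\}$, indexed by the clusters appearing at change points in $[s,t]$, i.e.\ by $A(\ell,m)$. By the defining Eq.~\eqref{eq:mcbif}, these solid simplices cover $K^{s,t}$. First I would check the good-cover condition: each $\Delta C=2^{C}$ is a full simplex and hence contractible, while for any subcollection $\Delta C_{1}\cap\dots\cap\Delta C_{j}=\Delta(C_{1}\cap\dots\cap C_{j})$ is again a full simplex on the common elements, so it is either empty or contractible. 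The nerve of $\mathcal{U}^{s,t}$ therefore has as its $j$-simplices exactly the subsets $\sigma\subseteq A(\ell,m)$ with $\bigcap_{(n,i)\in\sigma}\theta(t_n)_i\neq\emptyset$, which is precisely $\tilde K^{s,t}$. Equivalently, $K^{s,t}$ and $\tilde K^{s,t}$ are the two \emph{Dowker complexes} of the element--cluster membership relation $x\mathrel{R^{s,t}}(n,i)\iff x\in\theta(t_n)_i$ restricted to $[s,t]$.

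By the Nerve Theorem \citep{hatcherAlgebraicTopology2002} this yields, for every $(s,t)$, a homotopy equivalence $K^{s,t}\simeq\tilde K^{s,t}$ and hence isomorphisms $H_k(K^{s,t})\cong H_k(\tilde K^{s,t})$ for all $k$. This already establishes that the two bifiltrations have identical Hilbert functions, but the proposition claims the stronger statement that the associated persistence modules (the diagram of Fig.~\ref{fig:mcbif_persistence_diagram}) are isomorphic. For this I would need the pointwise equivalences to commute with the internal maps of the bifiltration.

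The main step, and the principal obstacle, is this naturality over the full two-dimensional index poset $P$ of Proposition~\ref{prop:mcbif_is_bifiltration}. Whenever $(s,t)\le(s',t')$ (equivalently $s\ge s'$ and $t\le t'$, so $[s,t]\subseteq[s',t']$), the cover $\mathcal{U}^{s,t}$ is a subcollection of $\mathcal{U}^{s',t'}$, inducing simultaneously the inclusions $K^{s,t}\hookrightarrow K^{s',t'}$ and $\tilde K^{s,t}\hookrightarrow\tilde K^{s',t'}$. The plan is to invoke a \emph{functorial} Nerve Theorem in the persistence setting (e.g.\ the functorial nerve lemma of Bauer--Kerber--Roll--Rolle, or equivalently the functorial Dowker theorem of Chowdhury--M\'emoli applied to the two Dowker complexes above) to select the homotopy equivalences \emph{naturally} in $(s,t)$. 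Applying $H_k$ to this natural equivalence then produces an isomorphism between the two persistence modules over $P$, proving the claim.

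The delicate point is that naturality must hold across the whole commutative triangle of inclusions, not merely along a single chain: one must verify that the relations $R^{s,t}$ assemble into a functor $P\to\mathbf{Rel}$, which holds because both the cluster index set $A(\ell,m)$ and the relation $R^{s,t}$ grow monotonically as the interval $[s,t]$ enlarges, and the good-cover condition is preserved under these enlargements. Since the corresponding equivalence is already established for the one-parameter MCF in \cite{schindlerAnalysingMultiscaleClusterings2025}, the only genuinely new work is to confirm that its proof, which rests on the pointwise good-cover property together with compatibility on comparable pairs, carries over verbatim from a totally ordered index to the grid poset $P$.
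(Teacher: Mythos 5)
Your proposal is correct and takes essentially the same route as the paper: the paper's proof is a one-line deferral to Proposition~30 of \citet{schindlerAnalysingMultiscaleClusterings2025} (a good-cover/nerve argument for the one-parameter MCF) together with the claim that it ``extends directly to the 2-parameter case,'' which is precisely the pointwise good-cover property plus functorial nerve/Dowker naturality over the grid poset that you spell out. Your write-up merely makes explicit the naturality check across the poset $P$ that the paper leaves implicit.
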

\begin{proof}
We use Lemma~\ref{lemma:persistent_nerve_lemma} to show the equivalence between MCbiF and nerve-based MCbiF. For $s\ge s'$ and $t\le t'$,  we denote $\tilde K:=\tilde K^{s,t}$ and $K:=K^{s,t}$, and     further denote $\tilde K^\prime := \tilde K^{s',t'}$ and $K^\prime:=K^{s',t'}$. As $\theta$ is a piecewise-constant function with change points $t_1<\dots<t_M$, we can assume without loss of generality that $s'=t_k$, $s=t_\ell$, $t=t_m$, $t'=t_n$ for change points $t_k<t_\ell<t_m<t_n$ of $\theta$. For the multi-index set $A:=A(k,n)$, define the cover $\{K_\alpha\}_{\alpha\in A}$ by $K_\alpha = \Delta C_\alpha$ if $\alpha\in A(\ell,m)\subseteq A$ and $K_\alpha=\emptyset$ otherwise and the cover $\{K^\prime_\alpha\}_{\alpha\in A}$ by $K^\prime_\alpha= \Delta C_\alpha$ for all $\alpha \in A$. Then we have $K_\alpha\subseteq K^\prime_\alpha$ for all $\alpha\in A$ and we recover the MCbiF $K=\bigcup_{\alpha\in A}K_\alpha$ and the nerve-based MCbiF $\tilde K=\mathcal{N}\left(\{K^\prime_\alpha\}_{\alpha\in A}\right)$ and similarly we recover $K^\prime$ and $\tilde K^\prime$. For any $k\in\mathbbm{N}$ and $\alpha_0, ...,\alpha_k\in A$, the intersections $\bigcap_{i=0}^k |K^\prime_{\alpha_i}|$ are intersections of solid simplices and thus either empty or contractible. For a detailed proof of this fact, see the proof of Proposition 30 in \citet{schindlerAnalysingMultiscaleClusterings2025}. Lemma~\ref{lemma:persistent_nerve_lemma} now yields homotopy equivalences $\tilde K\rightarrow |K|$ and $\tilde K^\prime\rightarrow |K^\prime|$ that commute with the canonical inclusions $|K|\hookrightarrow |K^\prime|$ and $\tilde K\hookrightarrow \tilde K^\prime$. This shows the equivalence between the MPH of the MCbiF and the MPH of the nerve-based MCbiF.
\end{proof}

Next, we prove Proposition~\ref{prop:dimension_mcbif} about the dimension of the nerve-based MCbiF.

\begin{proof}[Proof of Proposition~\ref{prop:dimension_mcbif}]
    Statement (i) follows directly from the definition in Eq.~\eqref{eq:mcbif}. 
    We show statement (ii) by induction. Base case: From the definition of the nerve-based MCbiF, it follows directly that $\dim N^{t_m,t_m}=0$ because the indices in $A(m,m)$ correspond to mutually exclusive clusters. Induction step: Let us assume that $\dim N^{t_m,t_{m+n}}=n$, then there exist $C_0,\dots,C_n\in \theta([t_m,t_{m+n}])$ such that $C_0\cap\dots\cap C_n\neq \emptyset$. As the clusters in partition $\theta(t_{m+n+1})$ cover the set $X$ there exist a cluster $C\in \theta(t_{m+n+1})$ such that $C\cap C_0\cap\dots\cap C_n\neq \emptyset$. Hence, $\dim N^{t_m,t_m+n}\ge n + 1$. If $\dim N^{t_m,t_m+n} > n + 1$ there would exist a second cluster $C'\in \theta(t_{m+n+1})$ with $C'\cap C\cap C_0\cap\dots\cap C_n\neq \emptyset$ but $C'\cap C\neq \emptyset$ contradicts that clusters of $\theta(t_{m+n+1})$ are mutually exclusive. Hence, $\dim N^{t_m,t_m+n}= n + 1$, proving statement (ii) by induction.
\end{proof}

We provide a proof for the connection between Sankey diagrams and the nerve-based MCbiF.

\begin{proof}[Proof of Proposition~\ref{prop:nerve_sankey_connection}]
    The Sankey diagram graph $S(\theta)=(V=V_1\uplus ... \uplus V_M,E=E_1\uplus ... \uplus E_{M-1})$ is a strict 1-dimensional subcomplex of $\tilde K = \tilde K^{t_1,t_M}$ 
    because $\tilde K^{t_m,t_m}=V_m\subseteq \tilde K$ and $\tilde K^{t_m,t_{m+1}}=E_m\subseteq \tilde K$. This also shows that the zigzag filtration \eqref{eq:zigzag_mcbif_sankey} contains exactly the same vertices (0-simplices) and edges (1-simplices) as $S(\theta)$.
\end{proof}

We next prove Proposition~\ref{prop:sankey_conflicts} that characterises conflicts that can arise in a single layer of the Sankey diagram.

\begin{proof}[Proof of Proposition~\ref{prop:sankey_conflicts}]
    (i) Suppose that $\theta$ has a 0-conflict in $[t_m,t_{m+1}]$. Then $\theta(t_m)\nleq \theta(t_{m+1})$ and $\theta(t_m)\ngeq \theta(t_{m+1})$. This means that there exists $C\in\theta(r_1)$ such that $\exists C',C''\in \theta(r_2)$ with $C\cap C'\neq \emptyset$, $C\cap C''\neq \emptyset$ and $C'\cap C''= \emptyset$, otherwise $\theta(r_1)\le \theta(r_2)$. Hence, $\theta(r_1)\nleq \theta(r_2)$ is equivalent to $\exists u\in V_m$ (the node corresponding to cluster $C$) with degree $\mathrm{deg}(u)\ge 2$ in $E_m$. An analogous argument shows that $\theta(r_1)\ngeq \theta(r_2)$ is equivalent to $\exists v\in V_{m+1}$ with $\mathrm{deg}(v)\ge 2$. This proves the statement.

    (ii) Let $x,y,z\in X$ form a triangle 0-conflict for the interval $[t_m,t_{m+1}]$, i.e., $x\sim_{t_m}y\sim_{t_{m+1}}z$ but $x\nsim_{t_{m+1}}y\nsim_{t_{m}}z$. In particular, the elements $x,y,z$ are mutually distinct. This means there exist $C_1,C_2\in\theta(t_m)$ and $C_1',C_2'\in\theta(t_{m+1})$ such that $x,y\in C_1$, $z\in C_2$, $y,z\in C_1'$ and $x\in C_2'$. This is equivalent to $C_1\cap C_2=\emptyset$, $C_1'\cap C_2'=\emptyset$ and $C_1\cap C_1'\neq \emptyset$, $C_1\cap C_2'\neq \emptyset$, $C_2\cap C_2'\neq \emptyset$. Let $u,u'\in V_m$ correspond to $C_1$ and $C_2$, respectively, and  $v,v'\in V_{m+1}$ correspond to $C_1'$ and $C_2'$, respectively. Then the above is equivalent to $[u',v],[v,u],[u,v']\in E_m$, which is again equivalent to the existence of a path in $E_m$ that has length at least 3.

    (iii) The statement follows from the fact that every cycle in $E_m$ is even because the graph $(V_{m}\uplus V_{m+1},E_m)$ is bipartite and the fact that every cycle in $E_m=K^{t_m,t_{m+1}}$ is non-bounding because $\dim K^{t_m,t_{m+1}}=1$. 
\end{proof}

\subsection{Proofs  and Details for Section~\ref{sec:links_other_methods}}
\label{appx:proofs_5}

We continue by proving that 0-conflicts can induce violations of the strong triangle inequality as stated in Proposition~\ref{prop:0_conflict_triangle_ineq}.

\begin{proof}[Proof of Proposition~\ref{prop:0_conflict_triangle_ineq}]
Let $x,y,z\in X$ lead to a triangle 0-conflict for the interval $[s,t]$, i.e., $\exists r_1,r_2\in[s,t]$: $x\sim_{r_1}y\sim_{r_2}z$ and 
$\nexists r\in[s,t]$: $x\nsim_{r} y\nsim_{r} z$. This means $D_{\theta,s}(x,y)\le r_1$ and $D_{\theta,s}(y,z)\le r_2$. Let us define $r_3:=D_{\theta,s}(x_z)\le t_M$. We know that $r_3\neq r_1$ and $r_3\neq r_2$ as otherwise $x\sim_{r_3}y\sim_{r_3}z$ for $r_3\in [s,t]$, contradicting the lack of transitivity. Hence, without loss of generality, $r_1<r_2<r_3$. Then the above is equivalent to $D_{\theta,s}(x_z)=r_3>r_1\ge \min(D_{\theta,s}(x,y),D_{\theta,s}(y,z))$, which is a violation of the strong triangle inequality.
\end{proof}

Next, we state Proposition~\ref{prop:VR_first_merge_times} that establishes the connection between the MPH of the MCbiF and that of the Merge-Rips bifiltration constructed from the matrix of first-merge times, $D_{\theta,s}$.

\begin{proposition}\label{prop:VR_first_merge_times}

Let us define the \textit{Merge-Rips bifiltration} $\mathcal{L}$ based on $D_{\theta,s}$ as
 \begin{equation}\label{eq:VR_point_cloud}
\mathcal{L}=(L^{s,t})_{t_1\le s\le t} \quad \text{where} \quad  L^{s,t} = \{ \sigma\subset X \;\;|\;\; \forall x,y\in \sigma: \;\; D_{\theta,s}(x,y) \le t \}.
 \end{equation}
Then the 0-dimensional MPH of the Merge-Rips bifiltration, $\mathcal{L}$, and of the MCbiF, $\mathcal{M}$, are equivalent, but the 1-dimensional MPH of $\mathcal{L}$ and $\mathcal{M}$ are generally not equivalent. 
Furthermore, if $\theta$ is strictly hierarchical, then $\mathcal{L}$ has a trivial 1-dimensional MPH.
\end{proposition}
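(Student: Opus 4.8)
The plan is to compare the two bifiltrations skeleton by skeleton and to exploit the fact that the Merge-Rips complex $L^{s,t}$ is, by definition, the flag (clique) complex on the same $1$-skeleton as the MCbiF complex $K^{s,t}$. First I would establish that $L^{s,t}$ and $K^{s,t}$ have identical $1$-skeleta at every bigrade $(s,t)$. Both have vertex set $X$ for any $s\le t$. For edges, observe that $[x,y]\in K^{s,t}$ iff there is some $r\in[s,t]$ and $C\in\theta(r)$ with $x,y\in C$, which by the definition of the first-merge time (Eq.~\eqref{eq:ultrametric_s}) is exactly the condition $D_{\theta,s}(x,y)\le t$, i.e. $[x,y]\in L^{s,t}$. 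Since $H_0$ depends only on the $1$-skeleton, this yields a natural isomorphism $H_0(K^{s,t})\cong H_0(L^{s,t})$. Because both bifiltrations are monotone with respect to the same poset order on $P$ (decreasing $s$ enlarges $\{r\ge s\}$ and so can only decrease $D_{\theta,s}$, while increasing $t$ enlarges the threshold and the interval), the inclusion-induced maps commute with these isomorphisms, giving an isomorphism of $0$-dimensional persistence modules. This proves the first claim.

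For the second claim I would note that $K^{s,t}\subseteq L^{s,t}$: the MCbiF fills a triangle $[x,y,z]$ only when $x,y,z$ lie simultaneously in a common cluster, whereas the Merge-Rips complex fills it as soon as its three edges are present. I would realise the strictness of this inclusion through a triangle $0$-conflict, using the three-element example $\theta(1)=\{\{x,y\},\{z\}\}$, $\theta(2)=\{\{x\},\{y,z\}\}$, $\theta(3)=\{\{x,z\},\{y\}\}$. On the interval $[1,3]$ all three edges appear in $K^{1,3}$, yet no single $r$ clusters $x,y,z$ together, so $[x,y]+[y,z]+[z,x]$ is a non-bounding cycle and $\mathrm{HF}_1(1,3)=1$; in $L^{1,3}$ the clique condition fills the triangle, so $H_1(L^{1,3})=0$. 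Hence the $1$-dimensional modules disagree, establishing non-equivalence in general.

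For the third claim I would show that strict hierarchy forces $D_{\theta,s}$ to be an ultrametric for every $s$, and then that the Merge-Rips complex of an ultrametric is acyclic in positive degrees. By Proposition~\ref{prop:0_conflict_hierarchy_nestedness}~(iv), a triangle $0$-conflict implies non-nestedness; since a strictly hierarchical sequence is nested on every interval, $\theta$ has no triangle $0$-conflict in any $[s,t]$. By Proposition~\ref{prop:0_conflict_triangle_ineq}, the absence of triangle $0$-conflicts is equivalent to $D_{\theta,s}$ satisfying the strong triangle inequality, i.e. $D_{\theta,s}$ is an ultrametric (with values possibly in $[s,\infty]$, which does not affect the argument). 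For any fixed threshold $t$, the relation $D_{\theta,s}(\cdot,\cdot)\le t$ is then an equivalence relation (reflexivity from $D_{\theta,s}(x,x)=s\le t$, symmetry, and transitivity from the strong triangle inequality), so the $1$-skeleton of $L^{s,t}$ is a disjoint union of cliques and $L^{s,t}$, being its clique complex, is a disjoint union of full simplices. Full simplices are contractible, whence $H_1(L^{s,t})=0$ for every $(s,t)$, giving a trivial $1$-dimensional MPH.

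The main obstacle is the third claim: although each step is short, it depends on correctly chaining the earlier structural results (nestedness $\Rightarrow$ no triangle $0$-conflict $\Rightarrow$ strong triangle inequality) and on the standard but essential fact that Vietoris--Rips complexes of ultrametric spaces decompose as disjoint unions of simplices. The first two claims are essentially immediate once the shared $1$-skeleton and the flag-complex characterisation of Merge-Rips are identified.
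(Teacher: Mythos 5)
Your proof is correct, and it is more self-contained than the paper's, which delegates most of the work to results from \cite{schindlerAnalysingMultiscaleClusterings2025}. For the $0$-dimensional claim you take essentially the same route as the paper (both bifiltrations share the same $1$-skeleton at every bigrade, and $H_0$ together with the inclusion-induced maps depends only on it); the paper cites Proposition~32 of the earlier MCF paper for this, while you verify the edge condition $[x,y]\in K^{s,t}\iff D_{\theta,s}(x,y)\le t$ directly. For non-equivalence at dimension $1$ your route is genuinely different: the paper argues abstractly that $\mathcal{L}$ is Rips-based and hence $2$-determined while $\mathcal{M}$ is not, which shows the \emph{complexes} differ but does not by itself exhibit differing homology modules; your explicit three-element sequence (which is the paper's own Example~\ref{ex:toy_example}) with $\mathrm{HF}_1(1,3)=1$ for $\mathcal{M}$ but $H_1(L^{1,3})=0$ is a sharper and fully rigorous witness that the $1$-dimensional modules are not equivalent. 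For the third claim, both proofs reduce to the fact that strict hierarchy makes $D_{\theta,s}$ satisfy the strong triangle inequality for every $s$; the paper then invokes Corollary~33 of the earlier paper, whereas you prove the needed fact from scratch (ultrametricity makes the relation $D_{\theta,s}(\cdot,\cdot)\le t$ an equivalence relation, so $L^{s,t}$ is a disjoint union of full simplices and $H_1$ vanishes) and you derive the ultrametricity through the paper's own Propositions~\ref{prop:0_conflict_hierarchy_nestedness}~(iv) and~\ref{prop:0_conflict_triangle_ineq} rather than asserting it. What your approach buys is independence from the external reference and a concrete counterexample; what the paper's buys is brevity and a structural explanation (flag complexes are $2$-determined, MCbiF is not) of \emph{why} dimension $1$ must generically differ.
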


\begin{proof}[Proof of Proposition~\ref{prop:VR_first_merge_times}]
    First note that $\mathcal{L}=(L^{s,t})_{t_1\le s\le t}$ is indeed a well-defined bifiltration because $L^{s,t}\subseteq L^{s',t'}$ if $s\ge s'$ and $t\le t'$. In particular, $\mathcal{L}$ is also defined uniquely on the finite grid $P=\{(s,t)\in [t_1,\dots,t_M]\times [t_1,\dots,t_M]\; |\; s\le t\}$ with partial order $(s,t)\le (s',t')$ if $s\ge s', t\le t'$.
    
    The proof of the proposition then follows from a simple extension of Proposition 36 in \cite{schindlerAnalysingMultiscaleClusterings2025} to the 2-parameter case. To see that the 0-dimensional MPH of $\mathcal{L}$ and $\mathcal{M}$ are equivalent, note that both bifiltrations have the same 1-skeleton. Moreover, the 1-dimensional MPH is generally not equivalent because $\mathcal{L}$ is a Rips-based bifiltration and thus 2-determined, whereas $\mathcal{M}$ is not 2-determined. 
    
    If $\theta$ is strictly hierarchical, then $D_{\theta,s}$ fulfils the strong-triangle inequality, and thus the Rips-based bifiltration leads to a trivial 1-dimensional homology, see \cite[Corollary 37]{schindlerAnalysingMultiscaleClusterings2025}. Hence, the 1-dimensional MPH of $\mathcal{L}$ is trivial.
\end{proof}

Finally, we provide a brief proof for Proposition~\ref{prop:laplacian} linking the 0-dimensional Hilbert function of a pair of partitions and the graph Laplacian built from the conditional entropy matrix between both partitions.

\begin{proof}[Proof of Proposition~\ref{prop:laplacian}]
Using Proposition~\ref{prop:nerve_sankey_connection}, we prove the statement with the equivalent nerve-based MCbiF. Note that the graph $G:=P_{t_2|t_1}P_{t_2|t_1}^T$ has the same vertices and edges as the simplicial complex $\tilde K^{t_1,t_2}$, which is 1-dimensional and thus also a graph according to Proposition~\ref{prop:dimension_mcbif}. This shows that  $\mathrm{HF}_(t_1,t_2)$ is given by the number of connected components in $G$. Furthermore, observe that $P_{t_2|t_1}^T \mathbf{1} = \mathbf{1}$, and that the resulting matrix $L$ is the Laplacian of the undirected graph $G$. Hence, $\dim(\ker L)$ is equal to the number of connected graph components~\citep{chungSpectralGraphTheory1997}, proving the statement. 
\end{proof}

\subsection{Proofs and Details for Section~\ref{sec:conclusion}}\label{appx:proofs_7}

It follows from the construction of MCbiF that the Hilbert functions are invariant to certain swaps of partitions in $\theta$.

\begin{proposition}\label{prop:hilbert_function_permutations}
$\mathrm{HF}_k(s,t)$ is invariant to swaps of partitions in sequence $\theta$ between $s$ and $t$, for $t_1\le s\le t$. 
\end{proposition}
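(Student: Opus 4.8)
The plan is to exploit the fact that the Hilbert function $\mathrm{HF}_k(s,t) = \dim H_k(K^{s,t})$ depends only on the single complex $K^{s,t}$ and not on any of the structure maps of the persistence module. Consequently, it suffices to show that the abstract simplicial complex $K^{s,t}$ is itself left unchanged by any swap of partitions occurring between $s$ and $t$; the equality of Betti numbers then follows immediately.

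First I would unpack the defining union of Eq.~\eqref{eq:mcbif}. By construction,
\[
K^{s,t} = \bigcup_{r \in [s,t]} \ \bigcup_{C \in \theta(r)} \Delta C,
\]
which is a union of solid simplices indexed by the clusters appearing across the scales $r \in [s,t]$. Because set-theoretic union is commutative and idempotent, the complex $K^{s,t}$ depends only on the collection of clusters $\{C : C \in \theta(r) \text{ for some } r \in [s,t]\}$, equivalently on the \emph{set} of partitions $\{\theta(r) : r \in [s,t]\}$, and \emph{not} on the order in which those partitions are encountered as $r$ increases from $s$ to $t$. This order-insensitivity is the crux of the argument.

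Next I would formalise a swap: let $\theta'$ be obtained from $\theta$ by permuting the partition values assigned to the change points contained in $[s,t]$ (in particular, by exchanging $\theta(t_i)$ and $\theta(t_j)$ for two change points $t_i, t_j \in [s,t]$). Such a permutation merely relocates each partition to a different sub-interval of $[s,t]$ without removing or introducing any partition, so it preserves the set of values occurring in the interval, i.e.\ $\{\theta'(r) : r \in [s,t]\} = \{\theta(r) : r \in [s,t]\}$. Substituting this equality into the union above yields ${K'}^{s,t} = K^{s,t}$, where $K'$ denotes the MCbiF of $\theta'$; note that this remains valid even if the swap causes two adjacent intervals to carry the same partition, since the union is unaffected by such coincidences.

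Finally, since the two complexes coincide as subcomplexes of $2^X$, their homology groups agree in every dimension, whence $\dim H_k({K'}^{s,t}) = \dim H_k(K^{s,t})$, i.e.\ $\mathrm{HF}_k(s,t)$ is unchanged. There is no genuine obstacle here beyond making the notion of a swap precise: the result is a direct consequence of the union in Eq.~\eqref{eq:mcbif} being insensitive to the ordering of its index set. This is precisely why positions farther from the diagonal, which aggregate more scales, become increasingly robust to reordering, culminating in the complete order-independence of $\mathrm{HF}_k(t_1,t_M)$ noted in the conclusion.
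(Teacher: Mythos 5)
Your proof is correct and follows essentially the same route as the paper: both arguments observe that a permutation of the partitions assigned to change points within $[s,t]$ leaves the set-theoretic union defining $K^{s,t}$ in Eq.~\eqref{eq:mcbif} unchanged, since the union is insensitive to the order of its index set, and hence the complex and all its Betti numbers are unchanged. Your write-up is somewhat more explicit about why order-insensitivity of the union is the crux, but there is no substantive difference from the paper's proof.
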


\begin{proof}
    Let us denote the change points of $\theta$ by $t_1<t_2<\dots<t_M$. Without loss of generality, $s=t_m$ and $t=t_{m+n}$ for $m+n\le M$. Let us now consider a permutation $\tau:[1,\dots,M]\rightarrow[1,\dots,M]$ such that $\tau(i)=i$ for $1\le i <m$ and $m+n<i\le M$ and define the permuted sequence of partitions $\theta_\tau$ as $\theta_\tau(t_m)=\theta(t_{\tau(m)})$. Despite the permutation we still get the same MCbiF for $\theta$ and $\theta_\tau$ for parameters $s\le t$ because
    $$\bigcup_{s\le r \le t} \,\, \bigcup_{C\in\theta(r)}\Delta C = \bigcup_{ s\le r \le t} \,\, \bigcup_{C\in\theta_\tau(r)}\Delta C.$$
    This implies that $\mathrm{HF}_k(s,t)$ is the same for $\theta$ and $\theta_\tau$.
\end{proof}

\section{Additional Examples}\label{appx:examples}

Our first example corresponds to the sequence of partitions analysed in Fig.\ref{fig:mcbif_construction}.

\begin{example}[3-element example]\label{ex:toy_example}
   Let $X=\{x_1,x_2,x_3\}$ and we define $\theta(0)=\hat{0}$, $\theta(1)=\{\{x_1,x_2\},\{x_3\}\}$, $\theta(2)=\{\{x_1\},\{x_2, x_3\}\}$, $\theta(3)=\{\{x_1,x_3\},\{x_2\}\}$ and $\theta(4)=\hat{1}$ so that $\theta$ is coarse-graining with $M=5$ change points. %
   This example corresponds to Fig.~\ref{fig:mcbif_construction}a, and the 0- and 1-dimensional Hilbert functions are provided in Fig.~\ref{fig:mcbif_construction}b. 
   
   Note that $\mathrm{HF}_0(1,2)<|\theta(2)|$ and $\mathrm{HF}_0(2,3)<|\theta(3)|$ indicates the presence of two triangle 0-conflicts, which are 
   not 1-conflicts because $\mathrm{HF}_1(1,2)=\mathrm{HF}_1(2,3)=0$.  
     See Fig.~\ref{fig:multistep_conflicts}a for an illustration.
  As shown in Proposition~\ref{prop:0_conflict_triangle_ineq}, the triangle 0-conflicts violate the strong triangle inequality of the matrix of first merge times $D_{\theta}$ (Eq~\ref{eq:ultrametric_s}), e.g., $D_{\theta}(x_1,x_3)=3>\max(D_{\theta}(x_1,x_2),D_{\theta}(x_2,x_3))=2$.  
  
  In addition, $\mathrm{HF}_1(1,3)=1$ indicates the presence of a 1-conflict that arises from the higher-order inconsistencies of cluster assignments across partitions $\theta(1)$, $\theta(2)$ and $\theta(3)$. See Fig.~\ref{fig:multistep_conflicts}b for an illustration. 
   In particular, the equivalence relations 
   $x_1\sim_{1}x_2$, $x_{2}\sim_2 x_3$ and $x_3 \sim_3 x_1$ induce a 1-cycle $z=[x_1,x_2]+[x_2,x_3]+[x_3,x_1]\in Z_1(K^{1,3})$ and due to the lack of transitivity on the interval $[1,3]$, the 1-cycle $z$ is also non-bounding, yielding a 1-conflict. 
   The 1-conflict then gets resolved at $t=4$ because $\theta(4)=\hat{1}$ restores transitivity on the interval $[1,4]$. 
   See Fig.~\ref{fig:multistep_conflicts}c for an illustration.
\end{example}

\begin{figure}[htb!]
    \centering
    \includegraphics[width=0.7\linewidth]{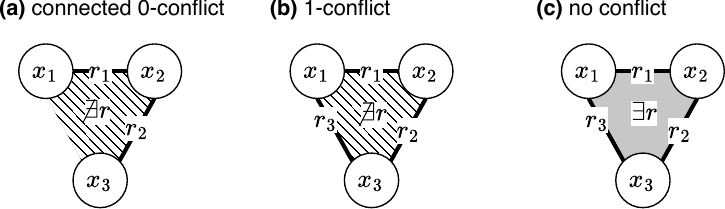}
    \caption{(a) Illustration of a triangle 0-conflict that violates the strong triangle inequality of the matrix of first merge times $D_{\theta,s}$ (Eq~\ref{eq:ultrametric_s}), (b) a 1-conflict and (c) three elements that are in no conflict due to global transitivity. If we choose $r_1=1,r_2=2,r_3=3$ and $r=4$, the conflicts depicted here correspond to the conflicts in Example~\ref{ex:toy_example}.}
    \label{fig:multistep_conflicts}
\end{figure}

\begin{figure}[htb!]
        \centering
        \includegraphics[width=0.5\linewidth]{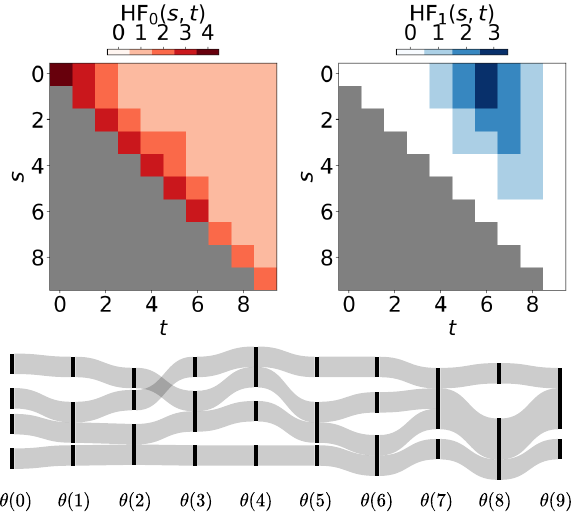}
        \caption{Hilbert functions and optimised Sankey diagram for the sequence of partitions $\theta$ defined in Example~\ref{ex:4element_example}.}
        \label{fig:4element_example}
    \end{figure}

\begin{example}[4-element example]\label{ex:4element_example}
    We now consider the more complex case of a 4-element set $X=\{x_1,x_2,x_3,x_4\}$. Let us start with $\theta(0)=\hat{0}$ and append in sequence the 6 distinct partitions that contain two singletons and one cluster of size 2, i.e., $\theta(1)=\{\{x_1,x_2\},\{x_3\},\{x_4\}\}$, $\theta(2)=\{\{x_1\},\{x_2, x_3\},\{x_4\}\}$, $\theta(3)=\{\{x_1\},\{x_2\},\{ x_3,x_4\}\}$, $\theta(4)=\{\{x_1,x_3\},\{x_2\},\{x_4\}\}$, $\theta(5)=\{\{x_1,x_4\},\{x_2\},\{x_3\}\}$ and $\theta(6)=\{\{x_1\},\{x_2,x_4\},\{x_3\}\}$. Finally, we append consecutively three partitions, each of which contains a cluster of size 3, i.e.,  $\theta(7)=\{\{x_1,x_2,x_3\},\{x_4\}\}$, $\theta(8)=\{\{x_1\},\{x_2,x_3,x_4\}\}$ and $\theta(9)=\{\{x_1,x_3,x_4\},\{x_2\}\}$. $\theta$ is a coarse-graining, non-hierarchical sequence with $M=10$ change points. See Fig.~\ref{fig:4element_example} for a Sankey diagram of $\theta$.

    To analyse the topological autocorrelation of $\theta$, we compute the MCbiF Hilbert functions $\mathrm{HF}_k$ for dimensions $k=0,1$ (see Fig.~\ref{fig:4element_example}). We observe that $\mathrm{HF}_0(s,s+3)=1<\min_{r\in[s,s+3]}|\theta(r)|$ for all $0\le s\le 8$, which implies that the hierarchy of $\theta$ is broken after no-less than three steps in the sequence when starting at scale $s$. Moreover, we can detect that $\theta$ is non-nested and has higher-order cluster inconsistencies because 1-conflicts emerge at scales $t=4,5,6$, as indicated by non-zero values in $\mathrm{HF}_1$. The 1-conflicts get resolved one-by-one through the partitions that contain clusters of size 3, and at $t=9$, when the third such partition appears in $\theta$, all 1-conflicts are resolved.
\end{example}

Finally, we show an example that demonstrates how conditional entropy does not detect 1-conflicts in general.

\begin{example}[CE cannot detect 1-conflicts]\label{ex:nce_1conflict}
Let $X=\{x_1,x_2,x_3,x_4\}$ for which we consider two different sequences of partitions $\theta(t)$ and $\eta(t)$ such that $\theta(1)=\eta(1)=\{\{x_1,x_2\},\{x_3\},\{x_4\}\}$, $\theta(2)=\eta(2)=\{\{x_1\},\{x_2,x_3\},\{x_4\}\}$ but $\theta(3)=\{\{x_1,x_3\},\{x_2\},\{x_4\}\}\neq\theta(3)=\{\{x_1\},\{x_2\},\{x_3,x_4\}\}$. See Fig.~\ref{fig:nce_1conflict} for a Sankey diagram representation of the two sequences of partitions. Note that $\theta$ and $\eta$ only differ at scale $t=3$. However, this difference is crucial because a 1-conflict emerges in $\theta$ at scale $t=3$, whereas $\eta$ has only triangle 0-conflicts and no 1-conflict. Note that $\theta$ corresponds to the toy example in Fig.~\ref{fig:mcbif_construction} with one additional isolated element.

\begin{figure}[htb!]
    \centering
    \includegraphics[width=0.9\linewidth]{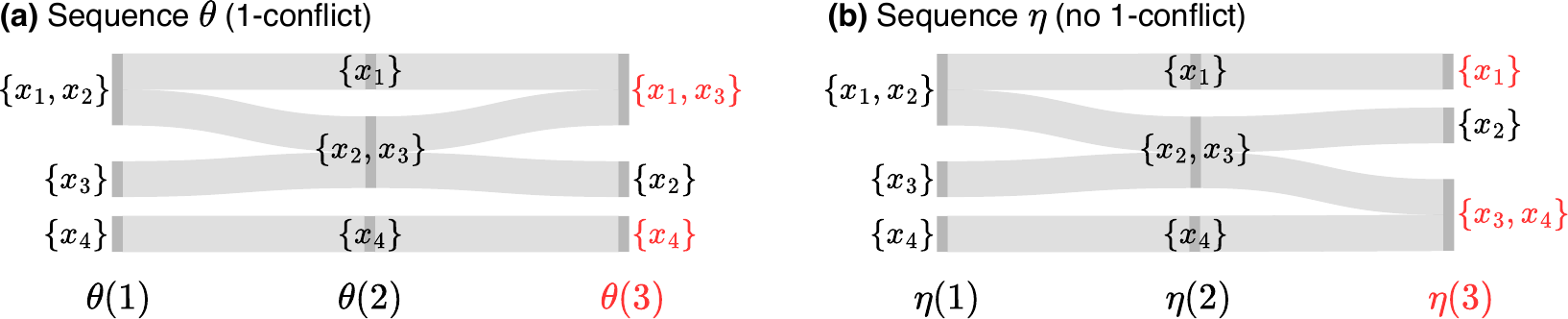}
    \caption{Sankey diagrams for sequences $\theta$ and $\eta$ defined in Example~\ref{ex:nce_1conflict}. Note that a 1-conflict emerges in $\theta$ at scale $t=3$, but $\eta$ has no 1-conflict.}
    \label{fig:nce_1conflict}
\end{figure}

In accordance with our theoretical results developed in Section~\ref{sec:mcbif_mph_hf_conflicts}, we can use the 1-dimensional Hilbert function $\mathrm{HF}_1$ to detect the 1-conflict in $\theta$ and distinguish the two sequences. In particular, $\mathrm{HF}_1(\theta(1),\theta(3))=1$ but $\mathrm{HF}_1(\eta(i),\eta(j))=0$ for all $i,j\in[1,2,3]$, $i\le j$. In contrast, the conditional entropy $\mathrm{H}$ (see Eq.~\ref{eq:conditional_entropy}) cannot distinguish between the two sequences as they yield the same pairwise conditional entropies. In particular, $\mathrm{H}(\theta(i)|\theta(j))=\mathrm{H}(\eta(i)|\eta(j))=\frac{1}{2}\log2$ for $i\neq j$. This demonstrates that the conditional entropy cannot detect higher-order cluster inconsistencies in sequences of partitions.
\end{example}

\section{Details on Experiments}
\label{appx:details_experiments}

\subsection{Regression Task}\label{appx:regression_task}

We first provide a rigorous definition of the space of coarse-graining sequences of partitions.

\begin{definition}[Space of coarse-graining sequences of partitions]
\label{def:space_coarse-graining_sequences}
   The \textit{space of coarse-graining sequences of partitions}, denoted $\Pi_N^M$, is defined as the set of coarse-graining sequences $\theta:[0,\infty)\rightarrow \Pi_X$ with $|X|=N$ and $M$ change points $t_m=0, \ldots, M-1$, such that  $|\theta(s)|\ge|\theta(t)|, \, \forall s\le t$, 
   which start with the finest partition $\theta(t_1=0)=\hat 0$ and end with the full set $\theta(t_M=M-1)=\hat 1$.
\end{definition}

For our experiments we sample randomly from the spaces $\Pi^{20}_5$ and $\Pi^{20}_{10}$. Note that $|\Pi_N^1|$ is given by the exponentially growing Bell numbers $B_N$ with $B_{10}=115,975\gg B_5=52$~\cite{stanleyEnumerativeCombinatoricsVolume2011}.

Figure~\ref{fig:pearson_correlation} shows the correlation between the minimal crossing number $y=\overline\kappa(\theta)$ (Eq.~\ref{eq:min_crossing}) and summary statistics of the five feature maps under investigation: the %
consensus indices $\overline{\mathrm{VI}}$, $\overline{\mathrm{ARI}}$ and $\overline{\mathrm{MOD}}$, and the MCbiF topological average measures $\bar c_0$ and $\bar c_1$. In addition to the results already described in the main text, we also observe that the correlation between $\overline{\mathrm{VI}}$ and $\bar c_0$ ($r=-0.32$ for $N=5$, $r=-0.48$ for $N=10$) is stronger than with $\bar c_1$ ($r=-0.12$ for $N=5$, $r=-0.34$ for $N=10$). This can be explained by the fact that $\overline{\mathrm{VI}}$ and $\bar c_0$ can both be computed from pairwise interactions of clusters in contrast to $\bar c_1$, see Section~\ref{sec:links_other_methods}. Furthermore, we observe a strong correlation between $\bar c_0$ and $\bar c_1$ ($r= 0.52$ for $N=5$ and $r= 0.43$ for $N=10$) because of the dependencies between 0- and 1-conflicts, see Section~\ref{sec:mcbif_mph_hf_conflicts}

\begin{figure}[htb!]
    \centering
    \includegraphics[width=\linewidth]{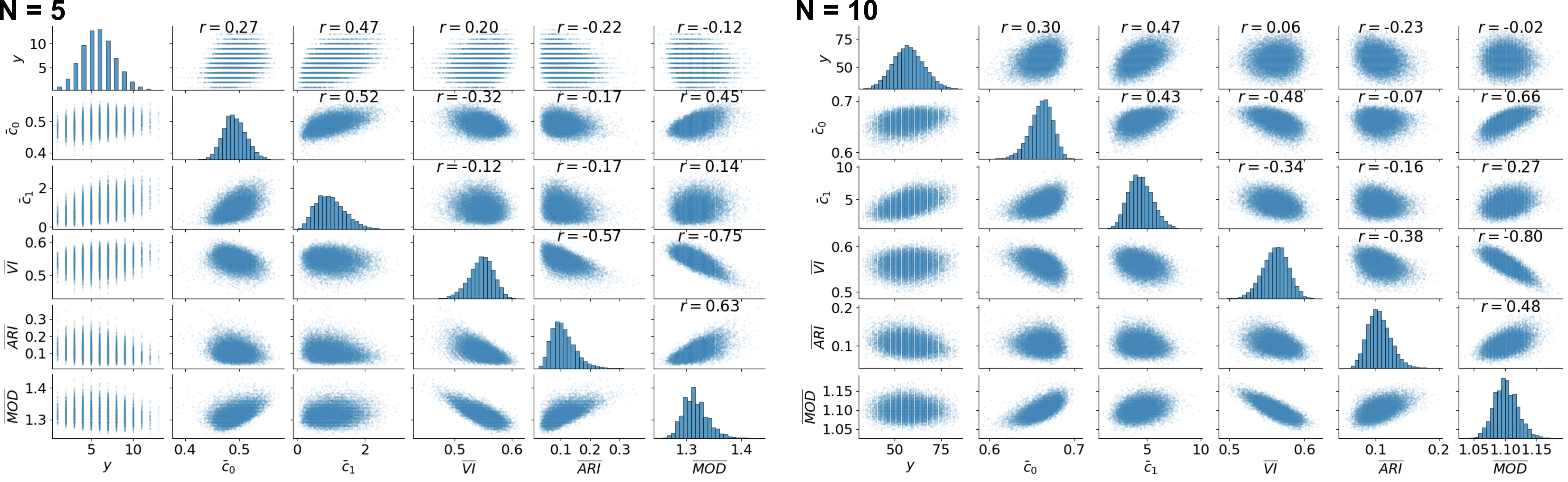}
    \caption{Pearson correlation ($r$) between crossing number $y$, consensus indices $\overline{\mathrm{VI}}$, $\overline{\mathrm{ARI}}$ and $\overline{\mathrm{MOD}}$, and MCbiF-based conflict measures $\bar c_0$ and $\bar c_1$ for $N=5$ and $N=10$.}
    \label{fig:pearson_correlation}
\end{figure}

Note that we can consider our five feature maps as $M\times M$ greyscale images, where $\mathrm{HF}_0$, $\mathrm{HF}_1$, $\mathrm{ARI}$ and $\mathrm{MOD}$ are symmetric and $\mathrm{CE}$ is asymmetric. The raw label encoding of $\theta$ is also similarly interpreted as an $N\times M$ greyscale image. For our regression task, we train a simple CNN~\citep{lecunConvolutionalNetworksImages1998} with one convolution and max-pool layer and one fully connected layer and also a simple MLP~\citep{bishopPatternRecognitionMachine2006} on the flattened images with one or two hidden layers and dropout~\citep{srivastavaDropoutSimpleWay2014}. For each feature map (or their combinations) separately, we perform hyperparameter optimisation for the number of filters (ranging from 2 to 6) and kernel size (chosen as 4, 8, 16, 32 or 64) in the CNN and the number of nodes (chosen as 4, 8, 16, 32, 64, 128 or 256), number of layers (1 or 2) and dropout rate (chosen as 0.00, 0.25 or 0.50) in the MLP. We use the Adam optimiser~\citep{kingmaAdamMethodStochastic2017} with batch size 32 and learning rate chosen as 0.01, 0.005, 0.001, 0.0005 or 0.0001 for training. We train both MLP and CNN over 150 epochs with early-stopping and a patience of 10 epochs. 

As an additional baseline model, we train a graph convolutional network (GCN)~\citep{kipfSemiSupervisedClassificationGraph2017} with a regression head on the weighted adjacency matrices of the Sankey diagram graph $S(\theta)$ (Eq.~\ref{eq:sankey_nodes_edges}), where the edges are weighted according to the number of elements in the overlap of two clusters. We choose three-dimensional node features consisting of a constant, the normalised weighted degree and the layer number of the node in the Sankey diagram. We perform hyperparameter optimisation for the number of hidden dimensions (16, 32, 64, 128), number of layers (1, 2 or 3) and dropout rate (0, 0.2 or 0.5). We use again Adam optimiser with batch size 32, learning rate chosen as 0.01, 0.005, 0.001, 0.0005 or 0.0001 and weight decay chosen as 0.0 or 0.0001, as well as the \texttt{ReduceLROnPlateau} learning rate scheduler,\footnote{\url{https://docs.pytorch.org/docs/stable/generated/torch.optim.lr_scheduler.ReduceLROnPlateau.html}} and train the GCN for 150 epochs with early-stopping and a patience of 10 epochs, consistent with the other models.

\begin{sidewaystable}
  \centering
  \caption{Regression task. Test $R^2$ score with bootstrapped 95\% confidence intervals of LR, CNN, MLP and GCN models trained on different feature sets for $N=5$ and $N=10$. 
  }\label{tab:sankey_task_r2_test_cis}
  \scriptsize
  \begin{tabular}{c|c|cc|cccccc}
     &
    &
    Raw label & Sankey\\ $N$ &Method &encoding & graph %
    & $\mathrm{HF}_0$ & $\mathrm{HF}_1$  & $\mathrm{HF}_0$ \& $\mathrm{HF}_1$  & $\mathrm{CE}$ & $\mathrm{ARI}$ & $\mathrm{MOD}$\\
    \midrule
    \multirow{3}{*}{5}
      & LR & 0.078 (0.060--0.095) & - & 0.147 (0.125--0.168) & 0.486 (0.463--0.509) & 0.539 (0.518--0.559) & 0.392 (0.367--0.414) & 0.166 (0.142--0.188) & 0.413 (0.388--0.436) \\
      & CNN            & 0.267 (0.241--0.293) & - &  0.155  (0.133--0.176)  &   0.504 (0.480--0.526)  &   \textbf{0.544 } (0.522--0.565) &   0.492 (0.468--0.514) & 0.422 (0.397--0.446) & 0.354 (0.327--0.280)\\
      & MLP & 0.104 (0.083--0.124)& - & 0.150 (0.130--0.170)&0.491 (0.469--0.513) &0.541 (0.520--0.561) &0.409 (0.385--0.432)& 0.214 (0.191--0.236) & 0.351 (0.325--0.375)\\
      & GCN & - & 0.416 (0.392--0.438) & - & - & - & - & - & - \\
    \midrule
    \multirow{3}{*}{10}
      & LR &0.038 (0.022--0.053)& - & 0.214 (0.191--0.237) & 0.448 (0.425--0.470) & \textbf{0.516} (0.494--0.537) & 0.457 (0.434--0.479) & 0.246 (0.221--0.270) & 0.345 (0.320--0.369)\\
      & CNN             & 0.072 (0.050--0.092) & - &  0.211 (0.188--0.233)    &    0.448 (0.425--0.470)  &  0.507 (0.486--0.527)    & 0.454 (0.431--0.476) & 0.294 (0.268--0.319) &  0.312 (0.286--0.335) \\
      & MLP &0.036 (0.019--0.053)& - & 0.212 (0.191--0.232)&0.450 (0.426--0.473)&0.514 (0.493--0.534)&0.458 (0.435--0.480) & 0.256 (0.230--0.280)& 0.246 (0.221--0.269)\\
      & GCN & - & 0.229 (0.208--0.248)& - & - & - & - & - & - \\
  \end{tabular}
\end{sidewaystable}

\begin{table}[htb!]
  \centering
  \caption{Train $R^2$ scores of LR, CNN, MLP and GCN models trained on different features for $N=5$ and $N=10$.}
  \label{tab:sankey_task_r2_train-full}
  \begin{tabular}{c|c||cc|cccccc}
  &     &    Raw label & Sankey \\ 
  $N$ &Method &encoding & graph %
  & $\mathrm{HF}_0$ & $\mathrm{HF}_1$  & $\mathrm{HF}_0$ \& $\mathrm{HF}_1$  & $\mathrm{CE}$ & $\mathrm{ARI}$ & $\mathrm{MOD}$\\
    \midrule
    \multirow{3}{*}{5}
      & LR & 0.096 & - &0.163 & 0.493 & 0.550 & 0.409 & 0.194 & 0.419 \\
      & CNN            & 0.321 & - &   0.170    &   0.509   &   \textbf{0.562 }  &   0.515 & 0.464 &  0.460  \\
      & MLP &0.254& - &0.160&0.499&0.547&0.439 & 0.366 & 0.396\\
      & GCN &  - & 0.397  & - &  - & - &- & - &- \\
    \midrule
    \multirow{3}{*}{10}
      & LR &0.061& - & 0.230 & 0.456 & \textbf{0.522} & 0.464 & 0.255& 0.370 \\
      & CNN             & 0.112 & - &   0.220    &    0.456   &  0.519     & 0.476  & 0.376 & 0.368  \\
      & MLP &0.114& - &0.218&0.453&0.515&0.468 & 0.368 & 0.282\\
      & GCN &  - & 0.234 & - &  - & - &- & - &- \\
  \end{tabular}
\end{table}

We perform a full grid search of the hyperparameter space for MLP and CNN trained on the different feature maps (or their combinations). We perform 145 trials of hyperparameter search with the Tree-Structured Parzen Estimator (TPE)~\citep{watanabeTreeStructuredParzenEstimator2025} for the GCN, as a full grid search was prohibited by increased computational complexity. We used the train split of our data for training and the validation split for evaluation and hyperparameter selection. Below, we detail the hyperparameters for the best %
models trained on the different features, which were chosen according to the performance on the validation split. We first report details for $N=5$:
\begin{itemize}
    \item Optimal model for raw label encoding at $N=5$: CNN with 16 filters, kernel size 4 and learning rate 0.001.
    \item Optimal model for Sankey graph $S(\theta)$ at $N=5$: GCN with three hidden layers of dimension 128 each, no dropout, no weight decay and learning rate 0.01.
    \item Optimal model for $\mathrm{HF}_0$ \& $\mathrm{HF}_1$ at $N=5$: CNN with 4 filters, kernel size 3, and learning rate 0.001.
    \item Optimal model for $\mathrm{CE}$ at $N=5$: CNN with 8 filters, kernel size 2, and learning rate 0.005.
    \item Optimal model for $\mathrm{ARI}$ at $N=5$: CNN with 8 filters, kernel size 2, and learning rate 0.005.
    \item Optimal model for $\mathrm{MOD}$ at $N=5$: LR.
\end{itemize}

We next report the details for $N=10$:
\begin{itemize}
    \item Optimal model for raw label encoding at $N=10$: CNN with 8 filters, kernel size 3 and learning rate 0.01.
    \item Optimal model for Sankey graph $S(\theta)$ at $N=10$: GCN with three hidden layers of dimension 128 each, dropout rate 0.25, no weight decay and learning rate 0.005.
    \item Optimal model for  $\mathrm{HF}_0$ \& $\mathrm{HF}_1$ at $N=10$: LR.
    \item Optimal model for $\mathrm{CE}$ at $N=10$: MLP with a single layer of 256 nodes, no dropout and a learning rate of 0.001.
    \item Optimal model for $\mathrm{ARI}$ at $N=10$: CNN with 64 filters, kernel size 2, and learning rate 0.005.
    \item Optimal model for $\mathrm{MOD}$ at $N=10$: LR.
\end{itemize}

We present the train $R^2$ scores for the optimised LR, CNN, MLP and GCN models trained on the different features in Table~\ref{tab:sankey_task_r2_train-full}. The test $R^2$ scores are presented in Table~\ref{tab:sankey_task_r2_test} in the main text.  We also report 95\% confidence intervals for the test $R^2$ score in Table~\ref{tab:sankey_task_r2_test_cis}, which were computed using bootstrapping with 5,000 iterations on the test data.

\subsection{Classification Task}\label{appx:classification_task}

We first provide the definition for order-preserving sequences of partitions.

\begin{definition}[Order-preserving sequence of partitions]\label{def:order_preserving_sequence} 
    When a partition $\theta(t_m)$ is equipped with a total order  $<_m$ on the clusters it is called an \textit{ordered partition}.\footnote{The ranking $\tau_m:V_m\rightarrow\{1,\dots,|V_m|\}$ of the vertices $V_m$ in the Sankey diagram $S(\theta)$ is one example of a total order $<_m$ on the clusters, see Section~\ref{sec:sankey_definitions}.} 
    Such a partition induces a \textit{total preorder} $\lesssim_m$ on $X$~\citep{stanleyEnumerativeCombinatoricsVolume2011}, i.e., if $[x]_t <_m [y]_t$ then $x\lesssim_m y$. 
    We call the %
    $\theta$ \textit{order-preserving} if there exist total orders $(<_1,\dots,<_M)$ such that the total preorders $(\lesssim_1,\dots,\lesssim_M)$ are compatible across the sequence, i.e., $\forall \ell,m$ we have $x \lesssim_\ell y$ iff $x \lesssim_m y$, $\forall x,y\in X$.%
\end{definition}

According to this definition, a sequence $\theta$ is \textit{non-order-preserving} if there is no total order on $X$ that is consistent with all the total preorders induced by the partitions $\theta(t)$.

\begin{figure}[htb!]
    \centering
    \includegraphics[width=0.35\linewidth]{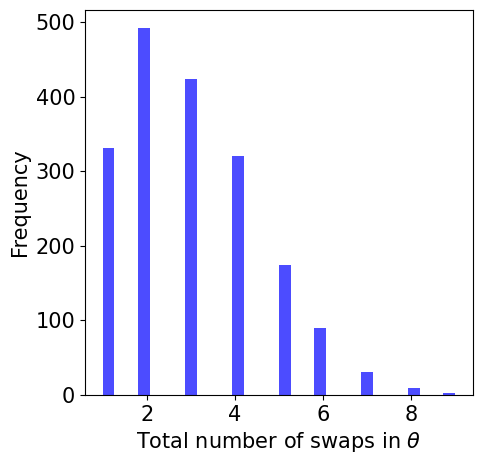}
    \caption{Classification task: %
    Histogram of the number of swaps in non-order-preserving sequences $\theta$ (class $y=1$). See text for the scheme to introduce random swaps in the cluster assignments as a means to break order-preservation.}
    \label{fig:classfication_task_data}
\end{figure}

\paragraph{Details on Synthetic Data.} We generate order-preserving ($y=0$) sequences $\theta\in \Pi_N^M$ through the following scheme: Let us assume that we have a total order $X=\{x_1,\dots ,x_N\}$ given by the element labels, i.e., $x_i<x_j$ if $i<j$. We construct each $\theta(t_m)$, $m=0,\dots,M-1$, by cutting $X$ into clusters of the form $C=\{x_{i},x_{i+1},\dots,x_{i+n}\}$. It is easy to verify that $\theta$ is indeed order-preserving. We adapt this scheme to generate sequences $\theta\in\Pi_N^M$ that are non-order-preserving ($y=0$): Again, we start by constructing each sequence $\theta(t_m)$ through cutting the ordered set $X$ as before. Additionally, with probability $p=0.1$, we swap the cluster assignments in $\theta(t_m)$ for two arbitrary elements $x, y\in X$. If $N$ and $M$ are large enough, the so-generated sequence $\theta$ is almost surely non-order-preserving. We chose $N=500$ and $M=30$ to demonstrate the scalability of the MCbiF method. 

The number of clusters of all our generated sequences of partitions $\theta\in \Pi_N^M$ for both classes is decreasing linearly. Moreover, the average number of swaps for sequences with $y=1$ is 2.98 for our choice of $p=0.1$, see Fig.~\ref{fig:classfication_task_data}.

\paragraph{Results.}

We find no significant difference between the baseline consensus indices $\overline{\mathrm{VI}}$, $\overline{\mathrm{ARI}}$ and $\overline{\mathrm{MOD}}$ of order-preserving ($y=0$) and non-order-preserving ($y=1$) sequences. In contrast, we observe a statistically significant increase of $\bar c_0$ and $\bar c_1$ for order-preserving sequences (Fig.~\ref{fig:ranking_task_boxplots}).

\begin{figure}[htb!]
    \centering
    \includegraphics[width=0.8\linewidth]{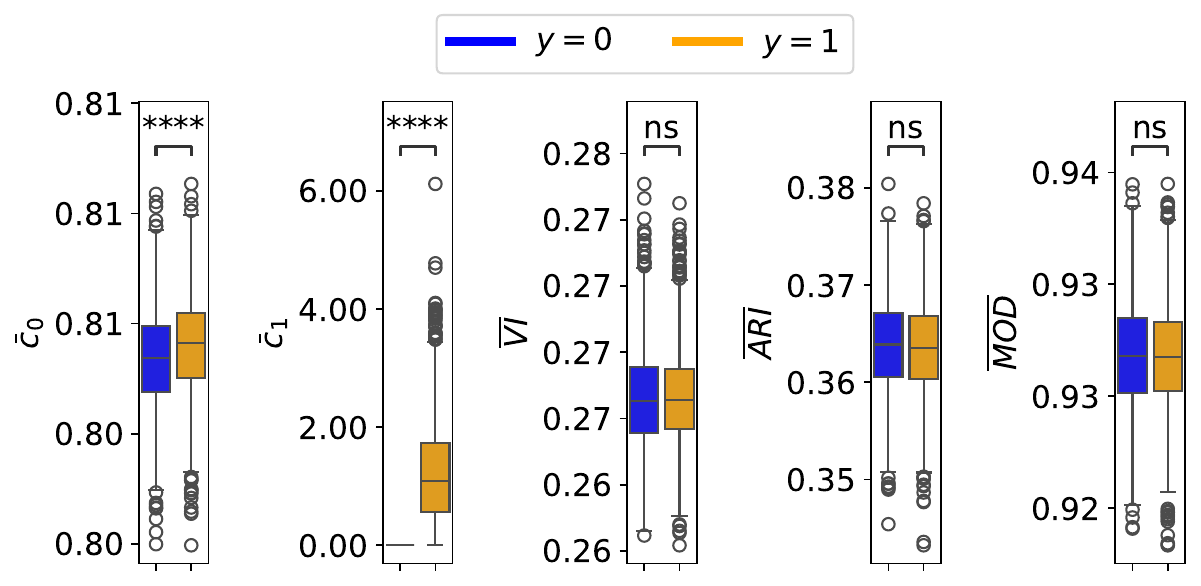}
    \caption{Difference between the baseline consensus indices $\overline{\mathrm{VI}}$, $\overline{\mathrm{ARI}}$ and $\overline{\mathrm{MOD}}$ and the MCbiF topological average measures $\bar c_0$ and $\bar c_1$  of order-preserving ($y=0$) and non-order-preserving ($y=~1$) sequences of partitions (**** indicates $p<0.0001$, Mann-Whitney U test).}
    \label{fig:ranking_task_boxplots}
\end{figure}

We measured the performance variability in the classification task with bootstrapped 95\% confidence intervals, see  Table~\ref{tab:classification_cis}, which were computed using bootstrapping with 5,000 iterations on the test data.

\begin{table}[htb!]
    \centering
    \scriptsize
    \caption{Classification task. Test accuracy with bootstrapped 95\% confidence intervals of logistic regression trained on different features.}
    \begin{tabular}{c|ccccc}
        Raw label \\ encoding & $\mathrm{HF}_0$ & $\mathrm{HF}_1$   & $\mathrm{CE}$ & $\mathrm{ARI}$ & $\mathrm{MOD}$\\
        \midrule
        0.53 (0.50--0.57) & 0.56 (0.53--0.60) & \textbf{0.97} (0.96--0.98) & 0.50 (0.47--0.54) & 0.49 (0.45--0.53) & 0.46 (0.43--0.50)   
    \end{tabular}
    \label{tab:classification_cis}
\end{table}

\subsection{Application to Real-World Temporal Data}\label{appx:wild_mice}

\paragraph{Data Preprocessing.} The temporal sequences of partitions computed by \cite{bovetFlowStabilityDynamic2022} are available at: \url{https://dataverse.harvard.edu/file.xhtml?fileId=5657692}. We restricted the partitions to the $N=281$ mice that were present throughout the full study period to ensure well-defined sequences of partitions, and considered the first nine temporal resolution values $\tau_i$, $i=1,\dots,9$, since $\theta_{\tau_{10}}$ is an outlier. Note that the sequences tend to be fine-graining, see Fig.~\ref{fig:wild_mice_n_clusters}.

\begin{figure}[htb!]
    \centering
    \includegraphics[width=0.8\linewidth]{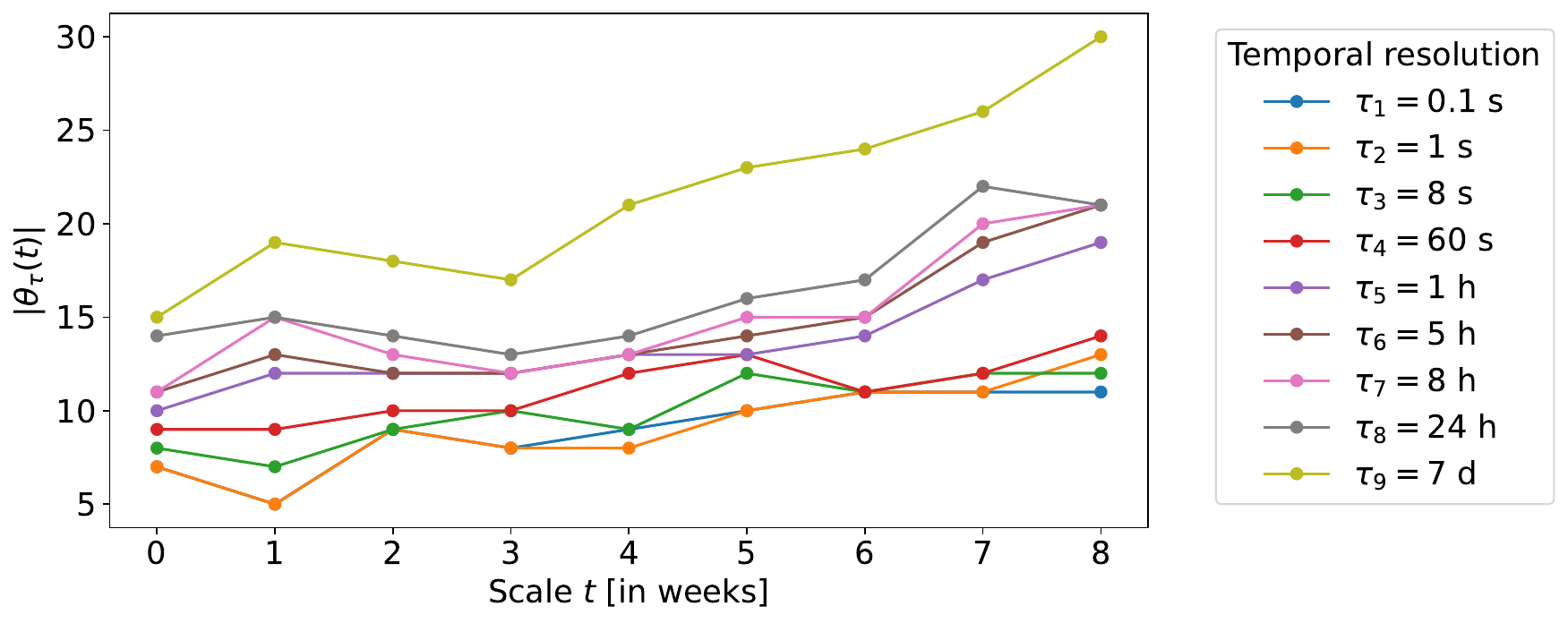}
    \caption{Number of clusters over weeks $t$ for different temporal resolutions $\tau$, where larger values of $\tau$ produce a higher number of clusters because of the increased temporal resolution.}
    \label{fig:wild_mice_n_clusters}
\end{figure}

\begin{figure}[htb!]
    \centering
    \includegraphics[width=0.8\linewidth]{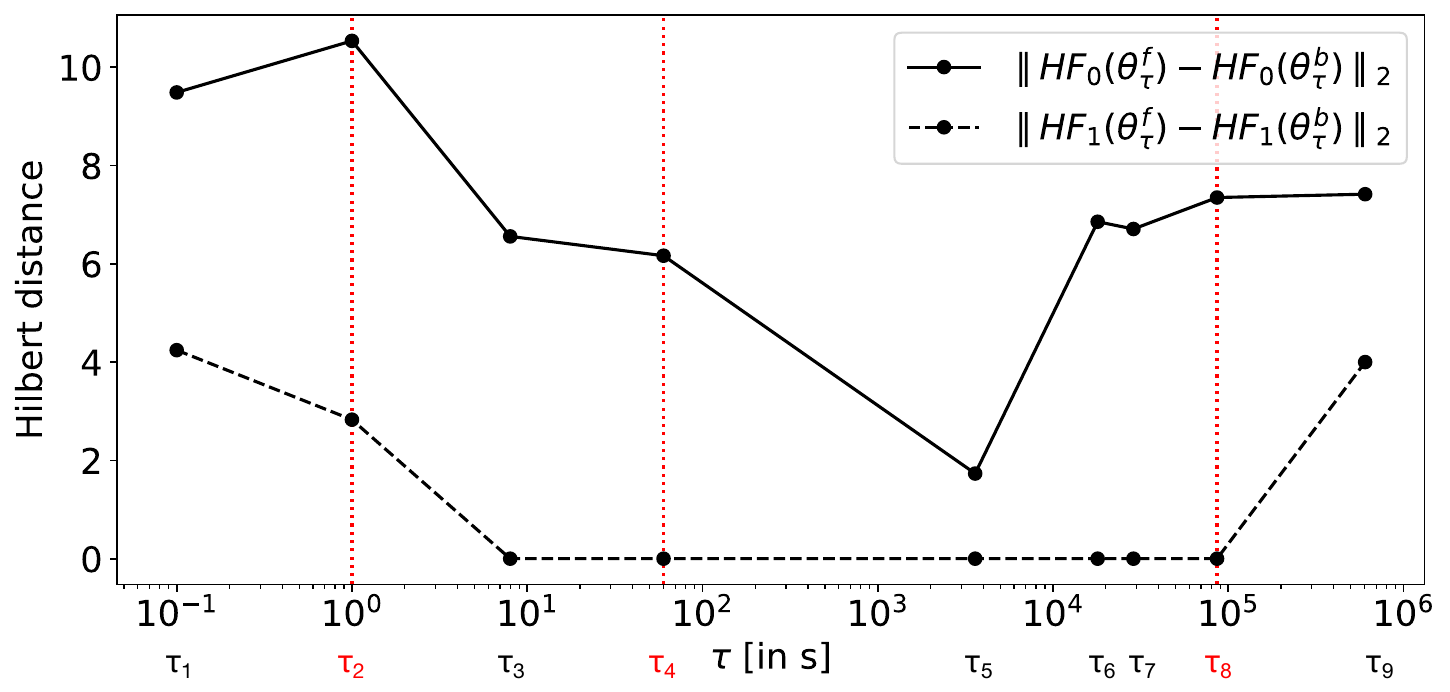}
    \caption{Hilbert distance between forward and backward sequences $\theta_{\tau}^f$  and $\theta_{\tau}^b$ for different temporal resolutions $\tau$.}
    \label{fig:wild_mice_flow_reversibility}
\end{figure}

\paragraph{Time Reversibility.} In the main text, we restricted our analysis to the so-called \textit{forward} Flow Stability sequences of partitions. However, by reversing time direction, \cite{bovetFlowStabilityDynamic2022} computed a second set of \textit{backward} sequences. For each temporal resolution $\tau_i$, we thus get a forward and backward sequence denoted by $\theta_{\tau_i}^f$  and $\theta_{\tau_i}^b$, respectively. Here we use the MCbiF to compare the forward and backward sequences of partitions for different $\tau_i$, and we compute the Hilbert distance $\parallel HF_k(\theta_{\tau_i}^f)-HF_k(\theta_{\tau_i}^b)\parallel_2$ for $k=0,1$, see Fig.~\ref{fig:wild_mice_flow_reversibility}.

We observe that the Hilbert distance between forward and backward sequences is high for $\tau_2$ because the large-scale group structure changes significantly over the study period, so that the temporal flows at low resolution $\tau_2$ are not reversible. In contrast, the Hilbert distance between forward and backward sequences is lower for $\tau_8$ because the underlying social groups are more stable over the study period, leading to increased time reversibility at the high temporal resolution. However, $\tau_4$ leads to the lowest Hilbert distance between forward and backward sequences, showing that the temporal resolution not only leads to the most hierarchical forward sequence but also to temporal flows that are most time reversible.

\section{Background}

\subsection{Sankey Diagrams} 
\label{sec:sankey_definitions}
Non-hierarchical sequences of partitions $\theta$ are  visualised by %
$M$-layered flow graphs $S(\theta)=(V=V_1\uplus ... \uplus V_M,E=E_1\uplus ... \uplus E_{M-1})$ called 
\textit{Sankey diagrams}~\citep{sankeyIntroductoryNoteThermal1898,zarateOptimalSankeyDiagrams2018}, where each level $m=1, \ldots, M$ corresponds to a partition and vertices $V_m$  represent its clusters while the %
directed edges $E_m$ between levels indicate the overlap between clusters:
\begin{equation}\label{eq:sankey_nodes_edges}
    V_m:=\{(m,i)\;|\; 1\le i\le|\theta(t_m)|\}; \,   E_m=\left\{\left[(m,i),(m+1,j)\right]\;|\; \theta(t_m)_i\cap \theta(t_{m+1})_j\neq\emptyset \right\},
\end{equation}
where $[u,v]\in E_m$ denotes a directed edge from $u\in V_m$ to $v\in V_{m+1}$.  
If $\theta$ is hierarchical, the Sankey diagram $S(\theta)$ 
is a directed
tree---a merge-tree if $\theta$ is coarse-graining, or a split-tree if $\theta$ is fine-graining. 
The graph $S(\theta)$ is also called an \textit{alluvial diagram}~\citep{rosvallMappingChangeLarge2010}.

Sankey diagrams are studied in computer graphics as they allow for the visualisation of complex relational data. In this context, a Sankey diagram is represented as a layout on the plane, whereby the nodes in each layer $V_m$ are vertically ordered according to a ranking $\tau_m:V_m\rightarrow\{1,\dots,|V_m|\}$, and the layout of the Sankey diagram is then defined by the collection of such rankings, $\tau:=(\tau_1, \dots, \tau_M)$.
For visualisation purposes, the layered layout should ideally minimise the number of crossings between consecutive layers, where
a crossing between two edges 
$[u,v], [u',v']\in E_m$ occurs 
if $\tau_m(u)>\tau_m(u')$ and $\tau_{m+1}(v)<\tau_{m+1}(v')$ 
or vice versa, and the crossing number~\citep{warfieldCrossingTheoryHierarchy1977} is given by:
\begin{equation}\label{eq:crossing number}
    \kappa_\theta(\tau) := \sum_{m=1}^{M-1}\sum_{[u,v], [u',v'] \in E_m} \mathbbm{1}_{\tau_m(u)>\tau_{m}(u') \land \tau_{m+1}(v)<\tau_{m+1}(v')},
\end{equation}
where $\mathbbm{1}$ denotes the indicator function.
The crossing number $\kappa_\theta(\tau)$ of the layout of the Sankey diagram $S(\theta)$ can be minimised by permuting the rankings in the layers, $\tau_m$, and we denote the \textit{minimum crossing number for the layout} as: 
\begin{align} \label{eq:min_crossing}
\overline\kappa_\theta:=\min_\tau \kappa_\theta(\tau).
\end{align}
This problem is known to be NP-complete~\citep{gareyCrossingNumberNPComplete2006} and finding efficient optimisation algorithms is an active research area~\citep{zarateOptimalSankeyDiagrams2018,liOmicsSankeyCrossingReduction2025}.

\subsection{Multiparameter Persistent Homology}\label{sec:background_mph}

\textit{Multiparameter persistent homology} (MPH) is an extension of standard persistent homology to $n>1$ parameters, first introduced by \cite{carlssonTheoryMultidimensionalPersistence2009}. We present here basic definitions, see \cite{carlssonTheoryMultidimensionalPersistence2009,carlssonComputingMultidimensionalPersistence2009,botnanIntroductionMultiparameterPersistence2023} for details. 

\paragraph{Simplicial Complex.}

$K$ be a \textit{simplicial complex} defined for the set $X$, such that $K\subseteq 2^X$ and $\tau \in K$ for $\forall\; \tau\subseteq \sigma \in K$. The elements of $\sigma\in K$ are called \textit{simplices} and a $k$-dimensional simplex (or \textit{$k$-simplex}) can be represented as $\sigma=[x_1,...,x_{k+1}]$ where $x_1,\dots,x_{k+1}\in X$ and we have fixed an arbitrary order on $X$. %
Note that $k=0$ corresponds to vertices, $k=1$ to edges, and $k=2$ to triangles. We define the $k$-skeleton $K_k$ of $K$ as the union of its $n$-simplices for $n\le k$.  We also define $\dim(K)$ as the largest dimension of any simplex in $K$.

\paragraph{Multiparameter Filtration.} 
Let us define the parameter space $(P, \le)$ as the product of $n\ge 1$ partially ordered sets $P=P_1\times \dots \times P_n$, i.e., $\boldsymbol{a}\le \boldsymbol{b}$ for $\boldsymbol{a},\boldsymbol{b}\in P$ if and only if $\boldsymbol{a}_i\le\boldsymbol{b}_i$ in $P_i$ for $i=1,\dots,n$. 
A collection of subcomplexes $(K^{\boldsymbol{a}})_{\boldsymbol{a}\in\mathbb{R}^n}$ with $K=\bigcup_{\boldsymbol{a}\in\mathbb{R}^n}K^{\boldsymbol{a}}$ %
and inclusion maps $\{i_{\boldsymbol{a},\boldsymbol{b}}:K^{\boldsymbol{a}}\rightarrow K^{\boldsymbol{b}}\}_{\boldsymbol{a}\le \boldsymbol{b}}$ that yield a commutative diagram is called a \textit{multiparameter filtration} (or \textit{bifiltration} for $n=2$). We denote by $\mathrm{birth}(\sigma)\subseteq P$ the set of parameters, called \textit{multigrades} (or \textit{bigrades} for $n=2$), at which simplex $\sigma\in K$ first appears in the filtration. %
For example, the \textit{sublevel filtration}
$K^{\boldsymbol{a}}=\{\sigma\in K\;|\; f(\sigma)\le\boldsymbol{a}\}$
for a filtration function $f:K\rightarrow P$ maps each simplex $\sigma$ to a unique multigrade $f(\sigma)$, i.e., $|\mathrm{birth}(\sigma)|=1$. A filtration is called \textit{one-critical} if it is isomorphic to a sublevel filtration, and \textit{multi-critical} otherwise. %

\paragraph{Multiparameter Persistent Homology.} 
Let $H_k$ for $k\in\{0,\dots,\dim(K)\}$ denote the $k$-dimensional \textit{homology functor} with coefficients in a field~\citep{hatcherAlgebraicTopology2002}, see Appendix~\ref{appx:simplicial_homology} for details. 
Then $H_k$ applied to the multiparameter filtration leads to a \textit{multiparameter persistence module}, i.e., a collection of vector spaces $(H_k(K^{\boldsymbol{a}}))_{\boldsymbol{a}\in\mathbb{R}^n}$, which are the homology groups whose elements are the generators of $k$-dimensional non-bounding cycles, and linear maps $\{\imath_{\boldsymbol{a},\boldsymbol{b}}:= H_k(i_{\boldsymbol{a},\boldsymbol{b}}):H_k(K^{\boldsymbol{a}})\rightarrow H_k(K^{\boldsymbol{b}})\}_{\boldsymbol{a}\le \boldsymbol{b}}$ that yield a commutative diagram called \textit{multiparameter persistent homology} (MPH). For dimension $k=0$, $H_k$ captures the number of disconnected components and for $k=1$, the number of holes. Note that, for $n=1$, we recover standard persistent homology (PH) ~\citep{edelsbrunnerTopologicalPersistenceSimplification2002}.

\paragraph{Hilbert Function.} While barcodes %
are complete invariants of 1-parameter PH ($n=1$), the more complicated algebraic structure of MPH ($n\ge 2$) does not allow for such simple invariants in general; hence, various non-complete invariants of the MPH are used in practice.  We focus on the $k$-dimensional \textit{Hilbert function}~\citep{harringtonStratifyingMultiparameterPersistent2019, botnanIntroductionMultiparameterPersistence2023} defined as
\begin{equation}\label{eq:hilbert_function}
    \mathrm{HF}_k: P \rightarrow \mathbb{N}_0, \, \mathbf{a}\mapsto \rank[H_k(i_{\boldsymbol{a},\boldsymbol{a}})]=\dim[H_k(K^{\mathbf{a}})],
\end{equation}
which maps each filtration index $\mathbf{a}$ to the $k$-dimensional Betti number of the corresponding complex $K^{\mathbf{a}}$. We call the $k$-dimensional MPH \textit{trivial} if $\mathrm{HF}_k=0$. The \textit{Hilbert distance} is then defined as the $L_2$ norm on the space of Hilbert functions and can be used to compare MPH modules. %

\subsection{The Homology Functor}\label{appx:simplicial_homology}

We provide additional background on simplicial homology and its functoriality, following \cite{hatcherAlgebraicTopology2002}.

\paragraph{Simplicial Homology.}
Let $K$ be a simplicial complex defined on the finite set $X$. For a fixed field $\boldsymbol{k}$ (the \texttt{RIVET} software uses the finite filed $\boldsymbol{k}=\mathbbm{Z}_2$~\citep{wrightTopologicalDataAnalysis2020}) %
and for all dimensions $k\in\{0,1,...,\dim(K)\}$ we define the $\boldsymbol{k}$-vector space $C_k(K)$ whose %
elements $z$ %
are given by a formal sum
\begin{equation}\label{eq:k_chain}
    z = \sum_{\substack{\sigma \in K \\ \dim(\sigma)=k}} a_\sigma \sigma
\end{equation}
with coefficients $a_\sigma\in\boldsymbol{k}$, called a \textit{$k$-chain}. Note that the $k$-dimensional simplices $\sigma=[x_0,x_1,...,x_k]\in K$ form a basis of $C_k(K)$. For a fixed total order on $X$, the \textit{boundary operator} is the linear map $\partial_k:C_k \longrightarrow C_{k-1}$ defined through an alternating sum operation on the basis vectors $\sigma=[x_0,x_1,...,x_k]$ given by
    $$\partial_k(\sigma) = \sum_{i=0}^k (-1)^i [x_0,x_1,...,\hat{x_i},...,x_k],$$
where $\hat{x_i}$ means that vertex $x_i$ is deleted from the simplex $\sigma$. %
The boundary operator fulfils the property %
$\im{\partial_{k+1}}\subset \ker{\partial_k}$. Hence, %
it connects the vector spaces $C_k$, $k\in \{0,1,...,\dim(K)\}$, through linear maps
    $$\dots \xrightarrow{\partial_{k+1}} C_k \xrightarrow{\partial_k} C_{k-1} \xrightarrow{\partial_{k-1}} \dots \xrightarrow{\partial_2} C_{1} \xrightarrow{\partial_1} C_{0} \xrightarrow{\partial_0} 0,$$
leading to a sequence of vector spaces called \textit{chain complex}. The elements in %
$Z_k := \ker{\partial_k}$ are called \textit{$k$-cycles} and the elements in %
$B_k := \im{\partial_{k+1}}$ are called \textit{$k$-boundaries}. 
Finally, the \textit{$k$-th homology group} $H_k$ is defined as the quotient of vector spaces
\begin{equation}\label{eq:homology_group}
    H_k := Z_k/B_k,
\end{equation}
whose elements are equivalence classes $[z]$ of $k$-cycles $z\in Z_k$. Each equivalence class $[z]\neq 0$ corresponds to a generator of non-bounding cycles, i.e., $k$-cycles that are not the $k$-boundaries of $k+1$-dimensional simplices. This captures connected components at dimension $k=0$, holes at $k=1$ and voids at $k=2$. %

\paragraph{Functoriality of $H_k$.} For fixed $k$, $H_k$ can be considered as a functor $H_k:\mathbf{Top}\rightarrow \mathbf{Vect}$, where $\mathbf{Top}$ denotes the category of topological spaces whose morphisms are continuous maps and $\mathbf{Vect}$ the category of vector spaces whose morphisms are linear maps. In particular, each topological space $K$ is sent to a vector space $H_k(K)$ and a continuous map $g:K\rightarrow K'$ is sent to a linear map $H_k(g):H_k(K)\rightarrow H_k(K')$ such that compositions of morphisms are preserved, i.e., $H_k(g \circ f)=H_k(g) \circ H_k(f)$ for two continuous maps $f$ and $g$.

\subsection{Zigzag Persistence}\label{appx:zigzag_persistence}

We provide background on \textit{zigzag persistence}, which was first introduced by \cite{carlssonZigzagPersistence2010}. For additional details, see \cite{deyComputationalTopologyData2022}.

\paragraph{Zigzag Filtration.} %
Let $K^m$ be a simplicial complex defined on the set $X$ for  $m=1,\dots, M$. If either $K^{m}\subseteq K^{{m+1}}$ or $K^{{m+1}}\subseteq K^{{m}}$, for all $m=1,\dots,M$, we call the following diagram a \textit{zigzag filtration}:
\begin{equation*}
    K^{1} \leftrightarrow K^{2} \leftrightarrow \dots \leftrightarrow K^{M-1} \leftrightarrow K^{M},
\end{equation*}
where $K^m\leftrightarrow K^{m+1}$ is either a forward inclusion $K^m\hookrightarrow K^{m+1}$ or a backward inclusion $K^m \hookleftarrow K^{m+1}$. While forward inclusion corresponds to simplex addition, backward inclusion can be interpreted as simplex deletion.

\paragraph{Zigzag Persistence.} Applying the homology functor $H_k$ to the zigzag filtration leads to a so called \textit{zigzag persistence module} given by:
\begin{equation*}
    H_k(K^{1}) \leftrightarrow H_k(K^{2}) \leftrightarrow \dots \leftrightarrow H_k(K^{M-1}) \leftrightarrow H_k(K^{M}),
\end{equation*}
where $H_k(K^m)\leftrightarrow H_k(K^{m+1})$ is either a forward or backward linear map. Using quiver theory, it can be shown that a zigzag persistence module has a unique interval decomposition that provides a barcode as a simple invariant.

\section{Details on Information-based Baseline Methods}\label{sec:baseline_methods_appx}

Information-based measures can be used to compare arbitrary pairs of partitions in the sequence $\theta$~\citep{meilaComparingClusteringsInformation2007}. Assuming a uniform distribution on $X$, the conditional probability distribution of $\theta(t)=\{C_1,\dots,C_n\}$ given $\theta(s)=\{C_1',\dots,C_m'\}$ is:
\begin{equation}\label{eq:P_t|s}
    P_{t|s}[i|j]=\frac{|C_i\cap C_j'|}{|C_j'|},
\end{equation}
and the joint probability $P_{s,t}[i,j]$ is defined similarly. The conditional entropy (CE) $\mathrm{H}(t|s)$ is then given by the expected Shannon information:
\begin{equation}\label{eq:conditional_entropy}
    \mathrm H(t|s) = - \sum_{i=1}^{|\theta(t)|}\sum_{j=1}^{|\theta(s)|} P_{s,t}[i,j]\log(P_{t|s}[i|j]) 
\end{equation}

It measures how much information about $\theta(t)$ we gain by knowing $\theta(s)$. If $\theta(s)\le \theta(t)$ there is no information gain and $\mathrm{H}(t|s)=0$. To summarise the pairwise conditional entropies in the sequence $\theta$, we define the $M\times M$ \textit{conditional entropy matrix} $\mathrm{CE}$ by:
\begin{equation}\label{mcbif:condition_entropy_matrix}
    \mathrm{CE}_{i,j}:=H(t_i|t_j),
\end{equation}
for $i,j\in\{1,\dots,M\}$. Furthermore, we can compute the variation of information (VI) $\mathrm{VI}(s,t)=\mathrm H(s|t)+\mathrm H(t|s)$, which is a metric. Both $\mathrm{CE}$ and $\mathrm{VI}$ are bounded by $\log N$. 

Extending information-based measures for the analysis and comparison of more than two partitions is non-trivial. However, the pairwise comparisons can be summarised with the \textit{consensus index}~\citep{vinhInformationTheoreticMeasures2010} which can be computed as the average %
$\mathrm{VI}$:
\begin{equation}\label{eq:vi_consensus_index}
    \overline{\mathrm{VI}}(\theta):=\frac{\sum_{i=1,i<j}^M\mathrm{VI}(t_i,t_j)}{M(M-1)/2}
\end{equation}

\end{document}